\theoremstyle{plain}
\newtheorem{theorem}{Theorem}[section]
\newtheorem*{theoremIntro}{Theorem}
\newtheorem{conjecture}[theorem]{Conjecture}
\newtheorem{proposition}[theorem]{Proposition}
\newtheorem{corollary}[theorem]{Corollary}
\newtheorem{lemma}[theorem]{Lemma}
\theoremstyle{definition}
\newtheorem{definition}[theorem]{Definition}
\newtheorem{example}[theorem]{Example}
\newtheorem{question}[theorem]{Question}
\theoremstyle{remark}
\newtheorem{remark}[theorem]{Remark}
\tikzstyle{vertex}=[circle, fill, draw, inner sep=0pt, minimum size=6pt]
\newcommand{\vertex}{\node[vertex]}
\tikzstyle{smallvertex}=[circle, fill=white, draw, inner sep=0pt, minimum size=6pt]
\newcommand{\smallvertex}{\node[smallvertex]}
\DeclarePairedDelimiter\abs{\lvert}{\rvert}
\DeclarePairedDelimiter{\set}{\{}{\}}
\newcommand{\RR}{\mathbb R}
\newcommand{\ZZ}{\mathbb Z}
\newcommand{\QQ}{\mathbb Q}
\newcommand{\CC}{\mathbb C}
\newcommand{\MD}{\mathcal D}
\newcommand{\conv}{{\rm Conv}}
\DeclareRobustCommand{\Chi}{{\mathpalette\irchi\relax}}
\newcommand{\irchi}[2]{\raisebox{\depth}{$#1\chi$}}
\DeclareRobustCommand{\stirling}{\genfrac\{\}{0pt}{}}
\DeclareMathOperator{\diag}{diag}
\DeclareMathOperator{\GL}{GL}
\DeclareMathOperator{\res}{Res}
\DeclareMathOperator{\im}{Im}
\DeclareMathOperator{\ind}{ind}
\title{Equivariant Ehrhart Theory of Hypersimplices}
\author{Oliver Clarke and Max K\"olbl}
\date{December 2023}
\begin{document}

\maketitle

\begin{abstract}
    We study the hypersimplex under the action of the symmetric group $S_n$ by coordinate permutation. 
    We prove that its equivariant volume, given by the evaluation of its equivariant $H^*$-series at $1$, is the permutation character of {\it decorated ordered set partitions} under the natural action of $S_n$. This verifies a conjecture of Stapledon for the hypersimplex. 
    To prove this result, we give a formula for the coefficients of the $H^*$-polynomial. Additionally, for the $(2,n)$-hypersimplex, we use this formula to show that trivial character need not appear as a direct summand of a coefficient of the $H^*$-polynomial, which gives a family of counterexamples to a different conjecture of Stapledon.
\end{abstract}

\section{Introduction}

The Ehrhart function $i_P \colon \ZZ_{\ge 0} \rightarrow \ZZ_{\ge 0}$ of a polytope $P \subseteq \RR^n$ counts the number of lattice points $i_P(d) = |dP \cap \ZZ^n|$ in its $d$-th dilation, for $d \ge 1$.
By convention, we take $i_P(0) = 1$.
If the vertices of $P$ are lattice points, then $i_P(d) \in \QQ[d]$ is a polynomial called the \emph{Ehrhart polynomial}.
More generally, if the vertices of $P$ have rational coordinates, then $i_P$ is the \emph{Ehrhart quasi-polynomial}, which is a polynomial whose coefficients are periodic functions from $\ZZ$ to $\QQ$.
We can associate a generating function to $i_P$ which we call the \emph{Ehrhart series} denoted ${\rm ES}_P[t] \in \ZZ[[t]]$.
If $P$ is an $n$-dimensional lattice polytope, then the Ehrhart series takes the form
\[
{\rm ES}_P[t] = \sum_{d \ge 0} i_P(d) t^d = \frac{h^*_P(t)}{(1-t)^{n+1}}
\]
for some polynomial $h^*_P(t) \in \ZZ[t]$ of degree $\leq d$ called the \emph{$h^*$-polynomial} of $P$.
Stanley \cite{stanley1980decompositions} showed that the coefficients of $h^*_P$ are all non-negative.
Understanding the relationship between properties of polytopes and properties of their $h^*$-coefficients is one of the central theme in Ehrhart theory \cite{stapledon2009inequalities, higashitani2010shifted, davis2015ehrhart, hofscheier2018ehrhart, braun2016unimodality}. 

\medskip
Given positive integers $k < n$, the \emph{$(k,n)$-hypersimplex} is defined as
\[
\Delta_{k,n} = \{(x_1, x_2, \dots, x_n) \in \RR^n : 
0 \le x_i \le 1 \text{ for all } 1 \le i \le n,\ x_1 + x_2 + \dots + x_n = k\}.
\]
Equivalently, $\Delta_{k,n}$ is the convex hull of the zero-one vectors in $\RR^n$ with exactly $k$ ones.
Stanley \cite[Chapter~4 Exercise~62(e)(h)]{stanley2011enumerative} gave two open problems about the hypersimplex, which were posed as tricky exercises.
The first was to prove \emph{Ehrhart positivity} (the property that all coefficients of the Ehrhart polynomial are non-negative) for the hypersimplex, which was accomplished by Ferroni \cite{ferroni2021positive} using a computation of Katzman \cite{Katzman2005Hilbert}.
The second open problem was to give a combinatorial interpretation of the coefficients of $h^*_{\Delta_{k,n}}(t)$.
In \cite{Katzman2005Hilbert}, Katzman studied so-called \emph{Veronese type algebras}, which includes the Ehrhart ring of the hypersimplex.
He showed 
\[
i_{\Delta_{k,n}}(d) = \sum_{s = 0}^{k-1} (-1)^s \binom{n}{s} \binom{d(k-s) - s + n -1}{n - 1}
\]
and, from this, derived a formula for the $h^*$-polynomial \cite[Corollary~2.9]{Katzman2005Hilbert}.
In \cite{li2012ehrhart}, Li gave the first result towards a combinatorial interpretation of the coefficients of $h^*_{\Delta_{k,n}}$ by proving two combinatorial formulae for the $h^*$-polynomial of the \emph{half-open} hypersimplex.
To do this, Li gave two different shelling orders of a well-known unimodular triangulation of the hypersimplex.
This unimodular triangulation was originally a consequence of a result by Stanley \cite{stanley1977eulerian} and separately arose in the study of Gr\"obner bases associated to the hypersimplex \cite{sturmfels1996grobner}.
Lam and Postnikov \cite{lam2007alcoved} showed that these two triangulations coincide and gave two equivalent triangulations called the \emph{alcove triangulation} and the \emph{circuit triangulation}.
The number of simplices of a unimodular triangulation of a polytope $P$ is equal to its normalised volume, which equals $h^*_P(1)$.
For $\Delta_{k,n}$, the normalised volume is equal to the Eulerian number $A(n-1, k-1)$ \cite{stanley1977eulerian}, which is the number of permutations of $1, \dots, n-1$ with exactly $k-1$ ascents, a fact that was implicit in work of Laplace \cite[p.~257]{laplace1886oeuvres}.

The \textit{Worpitzky identity} \cite{Worpitzky1883} gives a relationship between Eulerian numbers and binomial coefficients. 
In his thesis \cite{early2016thesis, early2016combinatorics}, Early generalised this identity by viewing its terms as dimensions of certain $\CC[S_n]$-modules, which arise from simplices, hypersimplices, and so-called \textit{$q$-plates}, for $q$ a root of unity. In the special case $q=1$, the plates have a combinatorial description, which inspired Early to conjecture in \cite{early2017conjectures} that the coefficients of the $h^*$-polynomial are given in terms of \emph{decorated ordered set partitions} or \emph{DOSPs}.

\begin{definition}\label{def: dosp}
    A $(k,n)$-DOSP is an ordered partition $(L_1, \dots, L_r)$ of $\{1,2, \dots, n\}$ together with a sequence of positive integers $(\ell_1, \dots, \ell_r)$ such that $\ell_1 + \ell_2 + \dots + \ell_r = k$.
    We write a DOSP as a sequence of pairs $D = ((L_1, \ell_1), \dots, (L_r, \ell_r))$.
    A DOSP is defined up to cyclic permutation of its pairs $(L_i, \ell_i)$.
    So, for example, we have $((L_2, \ell_2), \dots, (L_r, \ell_r), (L_1, \ell_1))$ is equal to $D$.
    We say $D$ is \emph{hypersimplicial} if $|L_i| > \ell_i$ for each $i \in \{1,\dots, r\}$.
\end{definition}

For every DOSP $D$, Early defines the \emph{winding number} $w(D) \in \{0,1, \dots, n-1\}$, see Definition~\ref{def: dist in dosp and winding no}, and conjectured that the $h^*$-polynomial is given by
\[
h^*_{\Delta_{k,n}}(t) = \sum_{D} t^{w(D)}
\]
where the sum is taken over all hypersimplicial $(k,n)$-DOSPs. This conjecture was proved by Kim \cite{kim2020combinatorial} and gives a satisfying answer to Stanley's second problem. We now recount the story of Ehrhart theory for hypersimplices in the equivariant setting.

\medskip

\noindent \textbf{Equivariant Ehrhart theory.} 
The symmetric group $S_n$ acts by coordinate permutation on $\RR^n$ and, for each $\sigma \in S_n$, we have $\sigma(\Delta_{k,n}) = \Delta_{k,n}$.
Thus, $\Delta_{k,n}$ is an $S_n$-invariant polytope.
Moreover, the vertices of the hypersimplex are integral and lie in a single $S_n$-orbit.
Such polytopes are known as \emph{orbit polytopes} \cite[Section~3.4.1]{elia2022techniques}.

Motivated by the study of finite groups acting on non-degenerate toric hypersurfaces \cite{stapledon2011representations}, Stapledon introduced an equivariant generalisation of Ehrhart theory \cite{stapledon2011equivariant}.
We provide a detailed description of the setup in Section~\ref{sec: prelim EEH setup}, which defines the equivariant analogues of the Ehrhart series and $h^*$-polynomial.
Intuitively, if $P$ is a $G$-invariant polytope, then the equivariant analogues are obtained by replacing the value of the Ehrhart function $i_P(d)$ with the permutation representation of $G$ acting on the lattice points of the dilation $dP$.
The equivariant analogue of the $h^*$-polynomial is the equivariant $H^*$-series denoted $H^*(P; G)[t] \in \mathcal R_{\CC}(G)[[t]]$, which is a power series with coefficients in the \emph{representation ring of $G$} over $\CC$.

One of the central open problems in equivariant Ehrhart theory is to determine when the $H^*$-series is a polynomial.
Stapledon \cite[Theorem~7.7]{stapledon2011equivariant} showed that if the toric variety corresponding to $P$ admits a non-degenerate $G$-invariant hypersurface, then the $H^*$-series is a polynomial and, moreover, each of its coefficients is \emph{effective}, i.e., each coefficients is a genuine representation.
See Definition~\ref{def: effective representation}. This result led to the following conjecture.

\begin{conjecture}[{\cite[Conjecture~12.1]{stapledon2011equivariant}}]\label{conj: stapledon effectiveness}
    Suppose $P$ is a $G$-invariant lattice polytope and $Y$ is the corresponding toric variety with torus-invariant line bundle $L$. Then the following are equivalent:
    \begin{enumerate}
        \item[(1)] $(Y, L)$ admits a non-degenerate $G$-invariant hypersurface,
        \item[(2)] The coefficients of the equivariant $H^*$-series are effective,
        \item[(3)] The equivariant $H^*$-series is a polynomial.
    \end{enumerate}
\end{conjecture}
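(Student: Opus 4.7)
My overall plan is to establish the cyclic chain $(1) \Rightarrow (2) \Rightarrow (3) \Rightarrow (1)$, noting that $(1) \Rightarrow (3)$ also holds directly. The implications $(1) \Rightarrow (2)$ and $(1) \Rightarrow (3)$ are Stapledon's \cite[Theorem~7.7]{stapledon2011equivariant}: applying the equivariant Danilov--Khovanskii formula to the complement of a non-degenerate $G$-invariant hypersurface produces a polynomial equivariant Hodge--Deligne polynomial with effective coefficients, and these coefficients are identified, up to a standard reinterpretation, with the $H^*$-coefficients. I would simply cite this result.

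For $(2) \Rightarrow (3)$, I use a dimension argument. Writing $H^*(P;G)[t] = \sum_{i \ge 0} H^*_i\,t^i$ with $H^*_i \in \mathcal R_{\CC}(G)$, evaluating characters at the identity $e \in G$ yields $\sum_{i \ge 0}(\dim H^*_i)\,t^i = h^*_P(t) \in \ZZ[t]$, which has degree at most $\dim P$. Effectiveness of each $H^*_i$ forces $\dim H^*_i \ge 0$, so $\dim H^*_i = 0$ for every $i$ beyond $\deg h^*_P$; since an effective representation of dimension zero is the zero representation, the $H^*$-series is a polynomial.

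The substantive direction is $(3) \Rightarrow (1)$: from polynomiality of the $H^*$-series I must construct a non-degenerate $G$-invariant hypersurface in $(Y,L)$. The approach I would take is an equivariant Bertini argument. Consider the space $H^0(Y,L)^G$ of $G$-invariant global sections and, for each torus orbit $\tau$ with closure $V(\tau)$, the restricted linear system on $V(\tau)$. A generic $G$-invariant section cuts out a $G$-invariant hypersurface $X$; the goal is to prove that $X \cap T_\tau$ is smooth (or empty) for every orbit $\tau$, which is exactly Khovanskii's non-degeneracy condition. Polynomiality should enter through Stapledon's identification of the $H^*$-coefficients with the equivariant Hodge--Deligne contributions of these transverse intersections, translating vanishing of high-degree coefficients into the desired smoothness statements.

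The principal obstacle is precisely this final implication. There is no off-the-shelf equivariant Bertini theorem producing non-degenerate hypersurfaces: $G$-invariance can create base points along fixed-point loci and can force every $G$-invariant section to vanish identically on certain invariant orbit closures, so a generic $X$ need not be non-degenerate. Making the plan succeed requires converting polynomiality of the $H^*$-series into concrete geometric statements such as basepoint-freeness of $H^0(V(\tau), L|_{V(\tau)})^{\mathrm{Stab}_G(\tau)}$ on a large open set of each invariant stratum. Whether this always holds under the polynomiality hypothesis alone is an open problem, and I would expect the argument (if it goes through at all) to require a stratum-by-stratum case analysis organised by $G$-stabilisers, or alternatively a refinement of the conjecture to exclude pathological cases.
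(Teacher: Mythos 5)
There is a fundamental mismatch here: the statement you are trying to prove is \emph{not} a theorem of the paper but a quoted conjecture of Stapledon (Conjecture~\ref{conj: stapledon effectiveness}), which the paper explicitly records as unresolved and, in part, false. The paper's only ``proof'' content for this statement is bibliographic: $(1) \Rightarrow (2) \Rightarrow (3)$ is \cite[Theorem~7.7]{stapledon2011equivariant}, the implication $(2) \Rightarrow (1)$ is reported to be \emph{false} by a counterexample of Stapledon and Santos (see \cite[Theorem~1.2]{elia2022techniques}), and $(3) \Rightarrow (2)$ --- the effectiveness conjecture --- is open and only verified in special cases (for the hypersimplex under $S_n$ via \cite[Theorem~3.60]{elia2022techniques}, and combinatorially for $\Delta_{2,n}$ in Section~\ref{sec: k=2} of this paper). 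So no complete proof of the equivalence exists, and none is attempted in the paper.

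Within your proposal, the citation of Stapledon for $(1) \Rightarrow (2) \Rightarrow (3)$ is fine, and your dimension argument for $(2) \Rightarrow (3)$ is correct: evaluating at the identity gives $\sum_i (\dim H^*_i)\,t^i = h^*_P(t)$, so effectiveness forces $H^*_i = 0$ beyond $\deg h^*_P$. But this very observation kills your overall strategy. If you could prove $(3) \Rightarrow (1)$, then composing with your own $(2) \Rightarrow (3)$ would yield $(2) \Rightarrow (1)$, contradicting the Stapledon--Santos counterexample. Hence the final implication in your cyclic chain is not merely ``the principal obstacle'' requiring an equivariant Bertini argument --- it is false as stated, and no stratum-by-stratum refinement can rescue it without changing the statement. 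The honest assessment is that the only genuinely open and potentially true direction beyond Stapledon's theorem is $(3) \Rightarrow (2)$, and the contribution of this paper is to verify consequences of effectiveness (and to refute the separate Conjecture~\ref{conj: stapledon positive coeff}) for hypersimplices, not to prove the equivalence above.
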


The result $(1) \Rightarrow (2) \Rightarrow (3)$ follows from \cite[Theorem~7.7]{stapledon2011equivariant}.
Unfortunately, in \cite[Theorem~1.2]{elia2022techniques}, the authors report that Stapledon and Santos had discovered a counterexample showing $(2) \nRightarrow (1)$. However, the conjecture $(3) \Rightarrow (2)$, which we refer to as the \emph{effectiveness conjecture}, is open and verified only in a handful of cases.

We give a brief overview of the known cases for the effectiveness conjecture.
Stapledon \cite[Proposition~6.1]{stapledon2011equivariant} showed that the coefficients of the $H^*$-polynomial of the simplex, under any invariant group action, are permutation representations. In the same paper, the effectiveness conjecture is also proved for: all $2$-dimensional polytopes under any invariant action, the zero-one hypercube under the action of coordinate permutations, and centrally symmetry polytopes under the action of $\ZZ/2\ZZ$ generated by the diagonal matrix $\diag(-1, -1, \dots, -1)$.
In \cite{ardila2019permutahedron}, the authors use results from \cite{ardila2021equivariant_volumes} to show that the permutohedron, which is the orbit polytope of the point $(1,2,\dots,n) \in \RR^n$ under the action of $S_n$, satisfies the effectiveness conjecture. 
The effectiveness conjecture is known to hold for the hypersimplex $\Delta_{k,n}$ under the action of $S_n$ because its toric variety admits a non-degenerate $S_n$-invariant hypersurface \cite[Theorem~3.60]{elia2022techniques}.
There are also constructive approaches to the effectiveness conjecture using $G$-invariant subdivisions to compute $H^*$-series \cite{elia2022techniques, stapledon2024invariant_triangulations}. In \cite{clarke2023equivariant}, it is shown that the conjecture is false if $P$ is allowed to be a rational polytope.

Suppose that $P$ is a $G$-invariant polytope. For each $g \in G$, the \emph{fixed polytope} $P_g = \{x \in P : g(x) = x\}$ is the set of points in $P$ fixed by $g$. Recall that $h^*_P(1)$ measures the lattice volume of $P$, in particular, it is a non-negative integer. The equivariant analogue $H^*(P; G)[1]$ is the \textit{equivariant volume} of $P$. For example, see \cite{ardila2021equivariant_volumes} for the permutohedron. In \cite{early2016thesis, early2016combinatorics} Early uses $q$-plates to study generalisations of equivariant volumes of simplices and hypersimplices; in particular \cite[Theorem
9]{early2016combinatorics} gives a relationship between these equivariant volumes. Stapledon conjectured the following for equivariant volumes.

\begin{conjecture}[{\cite[Conjectures~12.2, 12.3]{stapledon2011equivariant}}]\label{conj: stapledon H*[1]}
    Let $P$ be a $G$-invariant lattice polytope. For each $g \in G$, we have that $H^*(P; G)[1](g)$ is a non-negative integer. If the $H^*$-series is effective (and polynomial), then $H^*(P; G)[1]$ is a permutation representation.
\end{conjecture}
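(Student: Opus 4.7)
My plan is to address the two parts of the conjecture separately, each by relating $H^*(P;G)[1]$ to a geometric invariant of the fixed polytopes $P_g$ and then to combinatorial data.

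For the first assertion, I would start from Stapledon's fundamental character formula \cite{stapledon2011equivariant}: for each $g \in G$, the generating function $\sum_{d \geq 0} |(dP)_g \cap \ZZ^n| \, t^d$ is the Ehrhart series of the rational polytope $P_g$, because the trace of $g$ on the permutation module $\CC[dP \cap \ZZ^n]$ simply counts $g$-fixed lattice points. Combining this with the defining identity $H^*(P;G)[t](g) = (1-t)^{n+1} \sum_d \chi_d(g)\, t^d$, I would identify $H^*(P;G)[1](g)$ with a normalized lattice volume of $P_g$, with the normalization determined by $\dim P_g$ and the period of its Ehrhart quasi-polynomial. The resulting quantity is a manifestly non-negative rational number; integrality then follows from the general principle that $H^*[1]$ is a virtual character, hence its values are algebraic integers, and a rational algebraic integer lies in $\ZZ$.

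For the second assertion, assume $H^*(P;G)[t]$ is a polynomial with effective coefficients. I would pursue two complementary strategies. The first is representation-theoretic: by a classical criterion, a virtual character $\chi$ of $G$ is a permutation character if and only if its restriction to every cyclic subgroup $C \leq G$ is a non-negative integer combination of transitive $C$-set characters. Using Part~1, the restriction of $H^*[1]$ to $\langle g \rangle$ is encoded by the sequence of volumes $\mathrm{vol}(P_{g^k})$, and one can try to produce the required decomposition orbit-by-orbit from the geometry of the fixed polytopes of powers of $g$. The second, more geometric, strategy is to exploit equivariant subdivisions: if $P$ admits a $G$-invariant unimodular triangulation, then $H^*[1]$ is realized as the permutation representation of $G$ on the maximal simplices and the conclusion is immediate. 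Recent results on equivariant triangulations by Stapledon \cite{stapledon2024invariant_triangulations} and on equivariant subdivision techniques in \cite{elia2022techniques} suggest that effectiveness of $H^*$ should provide enough structure to construct at least a $G$-invariant regular lattice subdivision whose combinatorics realize the permutation structure, after which one contracts the cell-wise contributions into a single $G$-set.

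The main obstacle, and in my view the essential difficulty, is that $G$-invariant unimodular triangulations need not exist, even when $H^*$ is effective and polynomial. One would then have to work with coarser $G$-invariant subdivisions and control equivariantly how each non-simplex cell contributes to $H^*[1]$; in particular, one must show that the contributions can be refined into genuine permutation characters without destroying equivariance. A possible workaround is to pass to a $G$-equivariant toric resolution on which unimodular triangulations can be engineered and then track how the permutation structure descends, but this risks producing characters that are only virtual (not genuine) permutation characters on $P$ itself. The crux, therefore, is to upgrade the numerical non-negativity from Part~1, together with the effectiveness hypothesis, into an explicit combinatorial $G$-set whose permutation character equals $H^*(P;G)[1]$; producing this $G$-set uniformly in $P$ and $G$ is the step I expect to be hardest, and is precisely why this conjecture remains open outside of the special families (simplices, hypercubes, permutohedra, hypersimplices) where such a $G$-set can be exhibited by hand.
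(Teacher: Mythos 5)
The statement you are trying to prove is Stapledon's conjecture, which remains open in general; the paper does not prove it either, but only \emph{verifies} it for $\Delta_{k,n}$ under $S_n$ (Theorem~\ref{thm: H*[1] is perm character of DOSPs under Sn}) by an explicit computation: a closed formula for the coefficients $H^*_m(\sigma)$ (Theorem~\ref{thm: H* coefficient in terms of representations}), summation over $m$, and an inclusion--exclusion count showing the result equals the number of $\sigma$-fixed hypersimplicial DOSPs, so that $H^*[1]$ is literally the character of the $S_n$-action on that finite set. Your proposal, by contrast, is a research plan rather than a proof, and its two concrete mechanisms both break down. For the second assertion, the criterion you invoke --- that a virtual character is a permutation character if and only if its restriction to every cyclic subgroup is a non-negative integer combination of transitive characters --- is not a theorem; no such local characterisation of permutation characters exists, and this is precisely why exhibiting an actual $G$-set is the hard step. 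Your geometric fallback via $G$-invariant (unimodular) triangulations or subdivisions also cannot work uniformly: the paper observes at the end of Section~\ref{sec: discussion} that $\Delta_{2,n}$ admits \emph{no} $S_n$-invariant lattice triangulation for $n \ge 4$, yet the conjecture holds for it, so invariant triangulations are not the mechanism behind the statement, and ``contracting cell-wise contributions into a single $G$-set'' is exactly the unproved step you concede is missing.

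For the first assertion your sketch is also incomplete. The identity $H^*(P;G)[t](\sigma) = \det(I_{n+1} - t\rho(\sigma))\,{\rm ES}_{P_\sigma}[t]$ does relate $H^*$ to the fixed polytope, but $\det(I_{n+1}-t\rho(\sigma))$ is not $(1-t)^{\dim P_\sigma + 1}$: it carries cyclotomic factors, the $H^*$-series need not be a polynomial, and the evaluation at $t=1$ of a genuine power series need not even make sense. The claim that $H^*[1](g)$ is ``a manifestly non-negative normalized volume'' is therefore not a deduction but a restatement of the conjectural content; the algebraic-integer argument for integrality only applies once you know $H^*$ is a polynomial with coefficients in $\mathcal R(G)$, which is itself part of what must be controlled. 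In short, you have correctly identified where the difficulty lies, but neither part of the statement is proved by the proposal, and the paper itself only establishes the hypersimplex case by the explicit DOSP count described above.
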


Part of the strength of the analogy between $h^*_P$ and $H^*(P; G)$ lies in the fact that the evaluation $H^*(G; P)[1](g)$ is directly related to the lattice volume of $P_g$. Currently, one of the few methods to tackle Conjecture~\ref{conj: stapledon H*[1]} is to compute all coefficients of the $H^*$-series.
For instance, if each coefficient $H^*_i$  of the $H^*$-series is a permutation representation, then $H^*(P; G)[1] = \sum_i H^*_i$ is also a permutation representation. The conjecture has been verified for a few examples, including the simplex and zero-one hypercube \cite{stapledon2011equivariant}, and the permutohedron \cite{ardila2019permutahedron}.

In the case of the hypersimplex \cite[Section~3.3]{elia2022techniques}, the coefficients of the $H^*$-polynomial have been computed under the action of $C_n \le S_n$, a cyclic subgroup of order $n$.

\begin{theorem}[{\cite[Theorem~3.3]{elia2022techniques}}]\label{thm: hypersimplex cyclic action}
The coefficient of $t^i$ in $H^*(\Delta_{k,n}, C_n)[t]$ is the permutation representation of $C_n$ acting on the set of hypersimplicial $(k,n)$-DOSPs with winding number $i$. Hence, Conjecture~\ref{conj: stapledon H*[1]} holds for $\Delta_{k,n}$ under the action of $C_n$.
\end{theorem}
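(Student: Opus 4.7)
The proposal is to exploit a $C_n$-invariant unimodular triangulation $\mathcal{T}$ of $\Delta_{k,n}$ whose maximal simplices carry an intrinsic combinatorial statistic $w(\Delta)$ such that (i) $w$ is $C_n$-invariant, (ii) Li's half-open decomposition \cite{li2012ehrhart} yields $h^*_{\Delta_{k,n}}(t) = \sum_\Delta t^{w(\Delta)}$, and (iii) there is a $C_n$-equivariant bijection $\varphi$ between the maximal simplices of $\mathcal{T}$ and the hypersimplicial $(k,n)$-DOSPs taking $w(\Delta)$ to the winding number of the corresponding DOSP. We would take $\mathcal{T}$ to be the alcove/circuit triangulation of Lam--Postnikov \cite{lam2007alcoved}, which is $S_n$-invariant and unimodular, and whose maximal simplices are indexed by cyclic combinatorial objects on which $C_n$ acts by rotation.

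\textbf{Main construction.} The next step is to spell out the bijection. Each maximal simplex in $\mathcal{T}$ is encoded by a cyclic datum of length $n$ summing to $k$, from which one can directly read a hypersimplicial $(k,n)$-DOSP in the sense of Definition~\ref{def: dosp}; this bijection is set up so that rotation of the cyclic encoding corresponds precisely to the $C_n$-action on both sides. Li's half-open decomposition should then be phrased intrinsically, i.e.\ as a rule depending only on the cyclic encoding of each simplex rather than on a global shelling order, so that its associated statistic agrees on the nose with the DOSP winding number. Kim's proof \cite{kim2020combinatorial} of Early's conjecture supplies the underlying combinatorics identifying $w(\Delta)$ with the DOSP winding number; the additional content here is tracking the $C_n$-action throughout the identification.

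\textbf{Equivariant assembly and main obstacle.} Once the $C_n$-invariant half-open decomposition is in hand, we would invoke the standard principle that, for a $G$-invariant unimodular triangulation admitting a $G$-compatible half-open decomposition, the coefficient of $t^i$ in $H^*(P; G)[t]$ is the permutation representation of $G$ on $\{\Delta : w(\Delta) = i\}$. Concretely, evaluating the character at $g \in C_n$ via Stapledon's formula \cite{stapledon2011equivariant} reduces both sides to counting $g$-fixed half-open simplices (resp.\ $g$-fixed DOSPs) of each winding number, and $\varphi$ then matches them. Conjecture~\ref{conj: stapledon H*[1]} follows because $H^*(\Delta_{k,n}, C_n)[1]$ is the sum of these permutation representations, hence itself the permutation representation on the set of all hypersimplicial $(k,n)$-DOSPs, which is visibly a non-negative integer when evaluated at any $g$. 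The central obstacle is the existence of the intrinsic $C_n$-invariant description of $w(\Delta)$: a global shelling order will generically break $C_n$-equivariance, so the crux is to rewrite Li's construction in a form that depends only on the cyclic encoding of each simplex. Once this reformulation is achieved, the identification with the DOSP winding number, and hence the theorem, follows by a direct combinatorial check modelled on \cite{kim2020combinatorial}.
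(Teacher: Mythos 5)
First, a point of comparison: this paper does not prove Theorem~\ref{thm: hypersimplex cyclic action} at all --- it is imported verbatim from \cite[Theorem~3.3]{elia2022techniques}, so there is no internal proof to measure your argument against. Judged on its own terms, your proposal is a plausible reconstruction of the strategy of that reference (circuit triangulation, bijection of its maximal simplices with hypersimplicial DOSPs, equivariant half-open decomposition), but it contains one factual error and leaves unfilled the step that carries essentially all of the mathematical content.

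The factual error: the alcove/circuit triangulation of Lam--Postnikov is \emph{not} $S_n$-invariant. This very paper observes at the end of Section~\ref{sec: discussion} that $\Delta_{2,n}$ admits no $S_n$-invariant lattice triangulation for $n \ge 4$, and only \emph{conjectures} (Conjecture~\ref{conj: triangulation under dihedral group}) that the standard triangulation is invariant under the dihedral group $D_{2n}$, with $D_{2n}$ maximal. What you actually need is only $C_n$-invariance, which does hold (the $C_n$-action rotates the cyclic encodings of the circuits), so this is repairable, but the claim as stated is false and cannot be the foundation of the argument. The genuine gap: you correctly identify that a global shelling order such as Li's breaks $C_n$-equivariance and that one must replace it by an intrinsic $C_n$-invariant half-open structure whose excess statistic is the winding number --- and then you defer precisely this construction (``once this reformulation is achieved\dots''). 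Moreover, the ``standard principle'' you invoke for the equivariant assembly is not a black box: for a $g$-fixed unimodular simplex $\Delta$ with a $g$-invariant set $S$ of removed facets, the Ehrhart series of the $g$-fixed part of the half-open simplex is $t^{|S|}\prod_{O}(1-t^{|O|})^{-1}$ with the product over $g$-orbits of vertices of $\Delta$, and one must check that this denominator actually agrees with $\det(I_n - t\rho(g))$ before the coefficient of $t^i$ in $H^*[t](g)$ can be read off as a count of $g$-fixed half-open simplices with $w(\Delta)=i$. That verification, together with the $C_n$-compatible half-open decomposition itself, is the theorem; as written, your proposal is a plan for a proof rather than a proof, and in the context of this paper the correct move is simply to cite \cite[Theorem~3.3]{elia2022techniques}.
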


In the above result, the action of $C_n$ on the set of hypersimplicial $(k,n)$-DOSPs arises as the restriction of the natural action of $S_n$ on the set of DOSPs given by
\[
\sigma((L_1, \ell_1), \dots, (L_r, \ell_r)) 
= 
((\sigma(L_1), \ell_1), \dots, (\sigma(L_r), \ell_r)),
\]
for each $\sigma \in S_n$. For each $1 \le i \le r$, observe that $|\sigma(L_i)| = |L_i|$. So, the action of $S_n$ leaves the set of hypersimplicial DOSPs invariant. By restricting the action to a particular cyclic subgroup $C_n \le S_n$, it is straightforward to show that the set hypersimplicial DOSPs with a fixed winding number is invariant under $C_n$. However, the set hypersimplicial DOSPs with a fixed winding number is in general not invariant under the action of $S_n$. For instance, the $(2,4)$-DOSP $D = ((12,1), (34, 1))$ has winding number $w(D) = 1$. But, the transposition $\sigma = (2\ 3) \in S_n$ sends $D$ to the DOSP $\sigma(D) = ((13,1), (24,1))$, which has winding number $w(\sigma(D)) = 2$. Therefore, Theorem~\ref{thm: hypersimplex cyclic action} does not hold, or even make sense, if we replace $C_n$ with $S_n$. The aim of this paper is to understand what we can say about the hypersimplex under the action of the full symmetric group.

\medskip

\noindent \textbf{Overview and results.}
In Section~\ref{sec: prelim and notation}, we fix our notation and explain the setup for equivariant Ehrhart theory. 
In Section~\ref{sec: coeffs of H* poly}, we prove our first main result, Theorem~\ref{thm: H* coefficient in terms of representations}, which gives a formula for the coefficients of the $H^*$-polynomial. In particular, this theorem allows us to give a completely combinatorial proof that the equivariant $H^*$-series is a polynomial. See Corollary~\ref{cor: degree of H* poly}.
In Section~\ref{sec: DOSP thm}, we study hypersimplicial DOSPs under the action of $S_n$ and verify Conjecture~\ref{conj: stapledon H*[1]} as follows.

\begin{theoremIntro}[Theorem~\ref{thm: H*[1] is perm character of DOSPs under Sn}]
    We have that $H^*(\Delta_{k,n}; S_n)[1]$ is equal to the permutation representation of $S_n$ acting on the set of hypersimplicial $(k,n)$-DOSPs. Hence, Conjecture~\ref{conj: stapledon H*[1]} holds for $\Delta_{k,n}$ under the action of $S_n$.    
\end{theoremIntro}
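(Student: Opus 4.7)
The plan is to prove the equality of $S_n$-representations by comparing characters. Since characters determine representations over $\CC$, it suffices to show that for every $\sigma \in S_n$ the character value of $H^*(\Delta_{k,n}; S_n)[1]$ at $\sigma$ equals the number of hypersimplicial $(k,n)$-DOSPs fixed by $\sigma$ under the natural $S_n$-action on DOSPs. Both sides are genuine (not merely virtual) representations—the right-hand side is manifestly a permutation representation, and the $H^*$-polynomial of the hypersimplex under $S_n$ is effective by \cite[Theorem~3.60]{elia2022techniques}—so a character equality will upgrade to an isomorphism of representations.

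The first reduction is to restrict to cyclic subgroups. Since the formation of equivariant $H^*$-series commutes with restriction of the group, for every $\sigma \in S_n$ we have
\[
\chi_{H^*(\Delta_{k,n}; S_n)[1]}(\sigma) = \chi_{H^*(\Delta_{k,n}; \langle \sigma \rangle)[1]}(\sigma).
\]
Theorem~\ref{thm: hypersimplex cyclic action} addresses only the distinguished cyclic subgroup $C_n$ generated by an $n$-cycle, so the task becomes to prove an analogous statement for an arbitrary cyclic subgroup $\langle \sigma \rangle \le S_n$: namely, that the coefficient of $t^i$ in $H^*(\Delta_{k,n}; \langle \sigma \rangle)[t]$ is the permutation representation of $\langle \sigma \rangle$ on hypersimplicial $(k,n)$-DOSPs of winding number $i$. (The restricted $\langle \sigma \rangle$-action does preserve the winding number, even though the full $S_n$-action does not.) Summing over $i$ and evaluating the character at $\sigma$ then produces exactly the number of $\sigma$-fixed hypersimplicial DOSPs.

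The main obstacle is establishing this cyclic-subgroup generalization. I would attack it using the coefficient formula of Theorem~\ref{thm: H* coefficient in terms of representations} applied to $\langle \sigma \rangle$: compute the character value of each $H^*$-coefficient at $\sigma$ and match it combinatorially against the number of $\sigma$-fixed DOSPs of the prescribed winding number. On the combinatorial side, a DOSP $((L_1, \ell_1), \ldots, (L_r, \ell_r))$ is fixed by $\sigma$ exactly when the action of $\sigma$ on the ordered blocks $(L_1, \ldots, L_r)$ agrees with a cyclic rotation of the pairs, which forces each $L_i$ to be a union of cycles of some power of $\sigma$. On the geometric side, the fixed polytope $(\Delta_{k,n})_\sigma$ decomposes as a product of lower-dimensional hypersimplices indexed by the cycles of $\sigma$, and its lattice volume controls $\chi_{H^*(\Delta_{k,n}; \langle \sigma \rangle)[1]}(\sigma)$. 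Matching these two descriptions level-by-level in the winding number—and then summing—gives the needed character identity and therefore the theorem; the bulk of the technical work lies in carrying out this matching compatibly with the coefficient formula of Theorem~\ref{thm: H* coefficient in terms of representations}.
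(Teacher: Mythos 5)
There is a genuine gap, and it sits exactly where you have placed the ``bulk of the technical work.'' Your reduction to cyclic subgroups is fine: since $H^*(P;G)[t](\sigma)$ depends only on the fixed polytope $P_\sigma$ and on $\det(I_{n}-t\rho(\sigma))$, it is indeed unchanged under restriction to $\langle\sigma\rangle$, so it suffices to show $H^*[1](\sigma)$ equals the number of $\sigma$-fixed hypersimplicial DOSPs. But your proposed mechanism for doing this --- a graded version of Theorem~\ref{thm: hypersimplex cyclic action} for an \emph{arbitrary} cyclic subgroup $\langle\sigma\rangle$, matching the coefficient of $t^i$ with DOSPs of winding number $i$ --- fails at the first step: the parenthetical claim that ``the restricted $\langle\sigma\rangle$-action does preserve the winding number'' is false. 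The winding number (Definition~\ref{def: dist in dosp and winding no}) is defined via the cyclic order $1,2,\dots,n,1$ and is preserved only by the specific cyclic group generated by the long cycle $(1\ 2\ \cdots\ n)$ (and by the dihedral group), not by $\langle\sigma\rangle$ for general $\sigma$. The paper's own example in the introduction is a counterexample to your claim: $\sigma=(2\ 3)$ generates a cyclic subgroup of $S_4$ of order two and sends the $(2,4)$-DOSP $((12,1),(34,1))$, of winding number $1$, to $((13,1),(24,1))$, of winding number $2$. So the set of hypersimplicial DOSPs of fixed winding number is not even a $\langle\sigma\rangle$-set, and the intermediate statement you want to prove does not make sense. (The subsidiary claim that $(\Delta_{k,n})_\sigma$ is a product of lower-dimensional hypersimplices is also not right; it is the slice $\{x\in[0,1]^r:\sum_i s_ix_i=k\}$ of a cube by a weighted hyperplane, which is a product only in degenerate cases.)

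Once the winding-number grading is unavailable, your outline gives no route to the required identity $H^*[1](\sigma)=\#\{\sigma\text{-fixed hypersimplicial DOSPs}\}$, and this identity is the actual content of the theorem. The paper proves it ungraded: starting from Theorem~\ref{thm: H* coefficient in terms of representations} it sums over $m$ to get the closed form of Corollary~\ref{cor: H*[1] coeff short}, identifies the $h=0$ term $gk^{r-1}$ with the count of \emph{all} $\sigma$-fixed DOSPs (Corollary~\ref{cor: size of Phi}), and then shows by a separate inclusion--exclusion over orbits of ``bad sets,'' organised by turning number and collapsed with a Stirling-number identity (Lemmas~\ref{lem: count non-hyp DOSPs} and~\ref{lem: stirling identity}), that the remaining terms count the non-hypersimplicial $\sigma$-fixed DOSPs. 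Some such direct combinatorial count is unavoidable here and is entirely missing from your proposal.
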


The full statement of Theorem~\ref{thm: H*[1] is perm character of DOSPs under Sn} gives a formula for the character $H^*(\Delta_{k,n}; S_n)[1]$. Evaulating this formula at the identity gives the well-known formula for Eulerian numbers
\[
H^*(\Delta_{k,n}; S_n)[1](id) = \sum_{h = 0}^{k-1}(-1)^h \binom{n}{h}(k-h)^{n-1} = A(n-1, k-1).
\]
Similar to the Eulerian numbers, we show that the values $H^*(\Delta_{k,n}; S_n)[1](\sigma)$ satisfy a certain recurrence relation in Section~\ref{sec: recurrence relation}.

In Section~\ref{sec: k=2}, we apply Theorem~\ref{thm: H* coefficient in terms of representations} to the second hypersimplex $\Delta_{2,n}$. The main result of this section is Theorem~\ref{thm: k=2 H* coeff}, which gives a complete description of the coefficients of the $H^*$-polynomial in simple terms. From this, we observe two facts. Firstly, in Corollary~\ref{cor: k=2 H* is effective}, we have a combinatorial proof that the coefficients of the $H^*$-polynomial are effective. Secondly, in Corollary~\ref{cor: k=2 H*_m is perm rep iff m not 1}, we observe that there is exactly one coefficient of the $H^*$-polynomial that is not a permutation representation of $S_n$. We do this by showing that the coefficient of $t$ in $H^*(\Delta_{2,n}, S_n)[t]$ contains no copies of the trivial representation, which provides a family of counter examples to the following conjecture.

\begin{conjecture}[{\cite[Conjecture~12.4]{stapledon2011equivariant}}]\label{conj: stapledon positive coeff}
    Let $P$ be a $G$-invariant polytope. Assume $H^*(P; G)[t]$ is an effective polynomial. If the coefficient of $t^m$ in $h^*_P(t)$ is positive, then the coefficient of $t^m$ in $H^*(P; G)[t]$ contains the trivial representation with non-zero multiplicity.
\end{conjecture}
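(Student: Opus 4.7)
Since the paragraph immediately preceding this conjecture announces that the paper refutes it via the $(2,n)$-hypersimplex under the symmetric group action, I treat the task as sketching a disproof, namely establishing that for the family $P = \Delta_{2,n}$, $G = S_n$ we have $h^*_1(\Delta_{2,n}) > 0$ while the coefficient of $t$ in $H^*(\Delta_{2,n}; S_n)[t]$ contains no copy of the trivial representation of $S_n$. The effectiveness hypothesis of the conjecture is known to hold for $\Delta_{k,n}$ under $S_n$ via the non-degenerate invariant hypersurface construction cited in the introduction, so no extra work is required on that front.

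For the scalar half, I would invoke Kim's combinatorial formula that $h^*_i(\Delta_{k,n})$ counts hypersimplicial $(k,n)$-DOSPs of winding number $i$. Specialised to $k=2$, the hypersimplicial $(2,n)$-DOSPs come in two simple shapes, either $((L,2))$ with $L = \{1,\dots,n\}$, or $((L_1,1),(L_2,1))$ with $|L_i| \geq 2$; exhibiting a single DOSP of each winding number $1, 2, \dots, n-2$ from these two families settles positivity for $n \geq 4$. Alternatively one may extract $h^*_1$ directly from Katzman's closed form for $h^*_{\Delta_{k,n}}$.

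For the equivariant half, the plan is to apply Theorem~\ref{thm: H* coefficient in terms of representations}, which expresses each coefficient of $H^*(\Delta_{k,n}; S_n)[t]$ as an explicit alternating combination of induced permutation characters indexed by combinatorial data, and to specialise it to $k=2$ and the coefficient of $t$. Once $H^*_1$ is written in this form, the trivial multiplicity
\[
\langle H^*_1, \mathbf{1}_{S_n}\rangle = \frac{1}{n!}\sum_{\sigma \in S_n} H^*_1(\sigma)
\]
reduces by Burnside's lemma, applied termwise, to an alternating sum of $S_n$-orbit counts on the indexing sets. The vanishing then becomes a combinatorial identity, which I would attempt to prove by constructing a sign-reversing, $S_n$-equivariant involution pairing up the positive and negative contributions. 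A second route, which is perhaps cleaner computationally, is to determine $H^*_1(\sigma)$ as a function depending only on the cycle type of $\sigma$ and then evaluate the average by summing over partitions of $n$, mirroring the evaluation at the identity that recovers $A(n-1, 1) = 2^{n-1} - n$.

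The main obstacle will be the cancellation argument. Theorem~\ref{thm: H* coefficient in terms of representations} gives an efficient but combinatorially dense formula, and extracting the $k=2$, $m=1$ specialisation in a form amenable to either an explicit involution or a clean closed-form character value will likely require a bespoke reformulation in terms of DOSP-like objects or a small fixed-point calculation on each cycle type. Once that reformulation is in hand, I expect both $h^*_1 > 0$ and the vanishing of the trivial multiplicity to drop out without further difficulty, producing an infinite family of counterexamples indexed by $n \geq 4$.
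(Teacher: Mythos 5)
Your plan is essentially the paper's own route: the paper also deduces $h^*_1>0$ by exhibiting the hypersimplicial $(2,n)$-DOSP $((12,1),(34\cdots n,1))$ of winding number one, and obtains the equivariant half by specialising Theorem~\ref{thm: H* coefficient in terms of representations} to $k=2$, $m=1$ and computing the trivial multiplicity by averaging over cycle types. The only remark worth making is that the cancellation you flag as the main obstacle is immediate rather than delicate: for $k=2$ the formula collapses to $H^*_1=\rho_2-\rho_1$ (Theorem~\ref{thm: k=2 H* coeff}), where $\rho_2$ and $\rho_1$ are the transitive permutation characters on $\binom{[n]}{2}$ and $[n]$, so by Burnside the trivial multiplicity is $1-1=0$ and no sign-reversing involution is needed.
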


\begin{example}\label{example: counterexample to stapledon positive coeff}
    Let $n \ge 4$ and let $H^*_1$ be the coefficient of $t$ in $H^*(\Delta_{2,n}; S_n)(t)$. By Corollary~\ref{cor: k=2 H*_m is perm rep iff m not 1}, the multiplicity of the trivial representation in $H^*_1$ is zero. On the other hand, let $h^*_1$ be the coefficient of $t$ in $h^*_{\Delta_{2,n}}(t)$, which is equal to the number of hypersimplicial $(2,n)$-DOSPs with winding number one. In particular, the hypersimplicial DOSP 
    $
    ((12,1), (34\cdots n,1))
    $
    has winding number one, hence $h^*_1 > 0$. Therefore, the hypersimplex $\Delta_{2,n}$ is a counterexample to Conjecture~\ref{conj: stapledon positive coeff}.
\end{example}

We note that Stapledon has already given a counterexample to Conjecture~\ref{conj: stapledon positive coeff} \cite[Example~4.35]{stapledon2024invariant_triangulations} and asked whether the conjecture holds if we further assume that $P$ exhibits a $G$-invariant lattice triangulation. In the case of the hypersimplex $\Delta_{2,n}$, we observe, at the very end of the paper, that it does not admit an $S_n$-invariant lattice triangulation. 

In Section~\ref{sec: discussion}, we discuss questions and future directions based on the results of the paper. We ask for which subgroups $G \le S_n$, the hypersimplex $\Delta_{k,n}$ admits a $G$-invariant triangulation. We conjecture that the well-known unimodular triangulation of $\Delta_{k,n}$ is invariant under the dihedral group $D_{2n}$ and this in the largest subgroup for which the triangulation is invariant.

\section{Preliminaries and notation}\label{sec: prelim and notation}

In this section, we give the general setup for equivariant Ehrhart theory and fix our notation.

\medskip
\noindent \textbf{General notation.} Let $n$ be a positive integer. We write $[n] := \{1,2,\dots,n\}$ and $S_n$ for the symmetric group on $[n]$. Given a finite set $X$, we write $\binom{X}{n}$ for the collection of $n$-subsets of $X$. Group actions are left actions and group representations are over the complex numbers $\CC$.
Given a group $G$, its \emph{representation ring} $\mathcal R(G)$ is the ring of formal differences of isomorphism classes of representations of $G$. Addition in $\mathcal R(G)$ is given by direct sums and multiplication by tensor products. Since we work with finite groups and representations over $\CC$, by a slight abuse of notation, we identify representations with their characters. A \emph{class function} of $G$ is any function $G \rightarrow \CC$ that is constant on conjugacy classes of $G$. Given an element $g \in G$, we denote by $o(g)$ the order of $g$.

\begin{definition}
    Each permutation $\sigma \in S_n$ is a product of disjoint cycles. The \emph{cycle type} of $\sigma$ is the multi-set $(s_1, s_2, \dots, s_r)$ of cycle lengths. The indices of the disjoint cycles of $\sigma$ are the \emph{cycle sets} $C_1, \dots, C_r$, which partition $[n]$. By convention, we assume that $|C_i| = s_i$ for each $i \in [r]$. For example, the permutation  $\sigma = (1) (2 \ 3) (4 \ 5)$ has cycle type $(1,2,2)$ and cycle sets $C_1 = \{1\}$, $C_2 = \{2,3\}$, $C_3 = \{4,5\}$.
\end{definition}

Suppose that $G$ is a finite group that acts on a finite set $X$. The \emph{permutation representation} of the action is the vector space $V = \CC^{X}$, with basis $\{e_x : x \in X\} \subseteq V$, together with an action of $G$ defined by $g \cdot e_x = e_{g(x)}$ for all $g \in G$ and $x \in X$. The group $G$ acts on $V$ by permutation matrices, hence its character $\Chi$ is given by $\Chi(g) = |\{x \in X : g(x) = x\}|$ for each $g \in G$.

\subsection{Equivariant Ehrhart theory}\label{sec: prelim EEH setup}

The setup of equivariant Ehrhart theory requires two ingredients: a group $G$ acting on a lattice and a $G$-invariant polytope $P$ that lies in a codimension-one subspace. From this, we define the equivariant $H^*$-series denoted $H^*(P; G)$, which encodes the $h^*$-polynomials of all fixed polytopes of $P$. If $G$ is the trivial group, then $H^*(P; G)$ is naturally equivalent to the $h^*$-polynomial of $P$. 

\medskip

\noindent \textbf{Group setup.} Throughout this section, let $G \le \GL_{n+1}(\ZZ)$ be a finite subgroup that naturally acts on the lattice $\ZZ^{n+1}$ and, by extension, the vector space $\RR^{n+1} = \RR \otimes_\ZZ \ZZ^{n+1}$. Assume there exists a $G$-fixed $1$-dimensional subspace $A \subseteq \RR^{n+1}$, that is, for all $a \in A$ and $g \in G$ we have $g(a) = a$. Assume there exists a primitive lattice point $a \in \ZZ^{n+1}$ that spans $A$. Let $A^{\perp} \cong \RR^n$ be the orthogonal space to $A$. Let $N = (A^\perp \cap \ZZ^{n+1}) + \langle a \rangle_\ZZ \subseteq \ZZ^{n+1}$ be the rank $n+1$ sub-lattice generated by the lattice points of $A^\perp$ and $a$ and define $d = [\ZZ^{n+1} \colon N]$ the index of $N$ in $\ZZ^{n+1}$. Fix a positive integer $k$. Let $M$ be the $n$-dimensional affine lattice at height $k$ along $A$, that is $M = (\frac kd a + A^\perp) \cap \ZZ^{n+1}$, and $M_\RR = \frac kd a + A^\perp$ its corresponding affine space. Throughout, we identify $\RR^n$ with $M_\RR$ and we identify $\ZZ^n$ with the lattice $M$.

\begin{example}\label{ex: EEH setup S2}
    Let $n = 1$ and let $G \cong S_2$ be the subgroup of $\GL_2(\ZZ)$ of permutation matrices:
    \[
    G = \left\{
    e := 
    \begin{pmatrix}
        1 & 0 \\
        0 & 1 \\
    \end{pmatrix},\,
    \sigma := 
    \begin{pmatrix}
        0 & 1 \\
        1 & 0
    \end{pmatrix}
    \right\}.
    \]
    Let us compute the $G$-invariant affine subspace of $\RR^2$ at height one ($k = 1$) described above and depicted in Figure~\ref{fig: EEH setup}.

    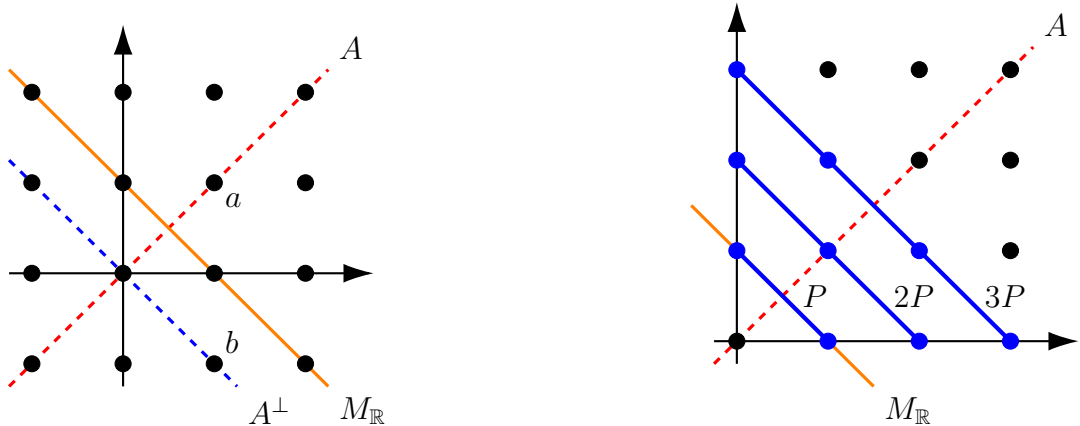
\begin{figure}[t]
    \begin{subfigure}{.5\textwidth}
      \centering
      \begin{tikzpicture}[scale = 0.6]

          \draw[thick, -{Latex[length=4mm, width=2.5mm]}] (-2.5, 0) -- (5.5,0);
          \draw[thick, -{Latex[length=4mm, width=2.5mm]}] (0, -2.5) -- (0, 5.5);

          \draw[very thick, color = red, dashed] (-2.5, -2.5) -- (4.5, 4.5);
          \draw[very thick, color = blue, dashed] (-2.5, 2.5) -- (2.5, -2.5);
          \draw[very thick, color = orange] (-2.5, 4.5) -- (4.5, -2.5);
          
          \vertex [] at (-2,-2) {};
          \vertex [] at (0,-2) {};
          \vertex [] at (2,-2) {};
          \vertex [] at (4,-2) {};
          \vertex [] at (-2,0) {};
          \vertex [] at (0,0) {};
          \vertex [] at (2,0) {};
          \vertex [] at (4,0) {};
          \vertex [] at (-2,2) {};
          \vertex [] at (0,2) {};
          \vertex [] at (2,2) {};
          \vertex [] at (4,2) {};
          \vertex [] at (-2,4) {};
          \vertex [] at (0,4) {};
          \vertex [] at (2,4) {};
          \vertex [] at (4,4) {};

          \node [anchor = north west] at (2,2) {$a$};
          \node [anchor = south west] at (2,-2) {$b$};
          \node [anchor = south west] at (4.5,4.5) {$A$};
          \node [anchor = north west] at (2.5,-2.5) {$A^\perp$};
          \node [anchor = north west] at (4.5,-2.5) {$M_\RR$};
      \end{tikzpicture}
      \caption{Subspaces $A, A^\perp, M_\RR$ in Example~\ref{ex: EEH setup S2}}
      \label{fig: EEH setup}
    \end{subfigure}
    \begin{subfigure}{.5\textwidth}
      \centering
      \begin{tikzpicture}[scale = 0.6]

          \draw[thick, -{Latex[length=4mm, width=2.5mm]}] (-0.5, 0) -- (7.5,0);
          \draw[thick, -{Latex[length=4mm, width=2.5mm]}] (0, -0.5) -- (0, 7.5);
          
          \draw[very thick, color = orange] (-1, 3) -- (3, -1);
          \draw[very thick, color = red, dashed] (-0.5, -0.5) -- (6.5, 6.5);
          \draw[ultra thick, color = blue] (2, 0) -- (0, 2);
          \draw[ultra thick, color = blue] (4, 0) -- (0, 4);
          \draw[ultra thick, color = blue] (6, 0) -- (0, 6);
          
          \vertex [] at (0,0) {};
          \vertex [color = blue] at (2,0) {};
          \vertex [color = blue] at (4,0) {};
          \vertex [color = blue] at (6,0) {};
          \vertex [color = blue] at (0,2) {};
          \vertex [color = blue] at (2,2) {};
          \vertex [color = blue] at (4,2) {};
          \vertex [] at (6,2) {};
          \vertex [color = blue] at (0,4) {};
          \vertex [color = blue] at (2,4) {};
          \vertex [] at (4,4) {};
          \vertex [] at (6,4) {};
          \vertex [color = blue] at (0,6) {};
          \vertex [] at (2,6) {};
          \vertex [] at (4,6) {};
          \vertex [] at (6,6) {};
          
          \node [anchor = north west] at (3,-1) {$M_\RR$};
          \node [anchor = south west] at (6.5,6.5) {$A$};
          \node [anchor = west] at (1.2,1) {$P$};
          \node [anchor = west] at (3.2,1) {$2P$};
          \node [anchor = west] at (5.2,1) {$3P$};
          
      \end{tikzpicture}
      \caption{Dilations of $P$ in Example~\ref{ex: EEH S2 example polytope}}
      \label{fig: EEH example polytope}
    \end{subfigure}
    \caption{Subspaces and polytopes accompanying Examples~\ref{ex: EEH setup S2} and \ref{ex: EEH S2 example polytope}.}
    \label{fig: EEH setup and example polytope}
    \end{figure}
    There is a unique $1$-dimensional $G$-fixed subspace $A = \{(x,x) \in \RR^2 : x \in \RR\}$. We take $a = (1,1)$ a primitive lattice point in $A$. The orthogonal space $A^\perp = \{(x,-x) \in \RR^2 : x \in \RR\}$ is one dimensional with lattice basis $b = (1,-1)$. The sublattice $N \subseteq \ZZ^2$ is freely generated by $\{a,b\}$, so we have
    \[
    d = [\ZZ^2 : N] = \left\lvert
    \det\begin{pmatrix}
        1 & 1 \\
        1 & -1 
    \end{pmatrix}
    \right\rvert
    = 2.
    \]
    We define the affine lattice $M = \frac{1}{2}e + A^\perp$ and identify $\RR^n$ with $M_\RR$.
\end{example}

The next example shows the predominant setup in the rest of the paper.

\begin{example}\label{ex: EEH setup Sn}
    Fix positive integers $k < n$. Fix $G \le \GL_n(\ZZ)$ the set of permutation matrices, which form a subgroup isomorphic to $S_n$. Let $A \subseteq \RR^n$ be the $G$-fixed subspace given by the span of the primitive lattice point $a_1 = (1,1, \dots, 1)$. Let $e_1, \dots, e_n$ be the standard unit vectors of $\RR^n$. The lattice of the orthogonal space $A^\perp \cap \ZZ^n$ is generated by the vectors $\{a_i := e_1 - e_i : 2 \le i \le n\}$. The lattice $N$ generated by $a_1, a_2, \dots, a_n$ has index 
    \[
    d = [\ZZ^n : N] = \left\lvert
    \det
    \begin{pmatrix}
        1 & 1  & 1 & \dots & 1 \\
        1 & -1 & 0 & \dots & 0 \\
        1 & 0  &-1 & \dots & 0 \\
        \vdots & \vdots & \vdots & \ddots & \vdots \\
        1 & 0  & 0 & \dots & -1
    \end{pmatrix}
    \right\rvert
    = n.
    \]
    We identify $\RR^{n-1}$ with the affine space $M_\RR = \frac{k}{n}a_1 +A^\perp \subseteq \RR^n$, which contains the affine lattice $M = \{x \in \ZZ^n : x_1 +\dots + x_n = k\}$.
\end{example}

\noindent \textbf{Polytope setup.} Assume the group setup with $G\le \GL_{n+1}(\ZZ)$ a finite group. Since $\GL_{n+1}(\ZZ) \subseteq \GL_{n+1}(\CC)$ there is a natural $(n+1)$-dimensional representation of $G$ given by the inclusion map $\rho : \GL_{n+1}(\ZZ) \rightarrow \GL_{n+1}(\CC)$. Let $P \subseteq \RR^n$ be a polytope. We say that $P$ is \emph{$G$-invariant} if for every $g \in G$ we have $g(P) = P$. Suppose $P$ is $G$-invariant. The group $G$ acts by permutation on the lattice points of $P$ and the lattice points of each of its dilates $mP \subseteq mM_\RR \subseteq \RR^{n+1}$ for $m \in \ZZ_{\ge 0}$. For each $m \in \ZZ_{\ge 0}$, denote by $\Chi_m$ the character of this permutation representation. By convention, we define $\Chi_0$ to be the trivial character. The \emph{equivariant Ehrhart series} ${\rm EES_P}$  and \emph{equivariant $H^*$-series $H^*(P; G)$} are elements of $\mathcal R(G)[[t]]$ the power series ring with coefficients in $\mathcal R(G)$ defined by
\[
{\rm EES}_P[t] = \sum_{m \ge 0} \Chi_m t^m
= \frac{H^*(P; G)[t]}{\det(I_{n+1} - t \rho)}
\]
where $I_{n+1}$ is the identity matrix of size $n+1$. The function $1/\det(I_{n+1} - t\rho)$ is the class function of $G$ whose evaluation at $g \in G$ is $1/\det(I_{n+1} - t\rho(g))$. We may also think of this as the `character' of a certain representation. Let $V = \CC^{n+1}$ be the $\CC[G]$-module associated to $\rho$, then, by \cite[Lemma~3.1]{stapledon2011equivariant}, we have 
\[
\sum_{m \ge 0} {\rm Sym}^m V t^m = \frac{1}{1 - Vt + \bigwedge^2 Vt^2 - \dots + (-1)^{n+1} \bigwedge^{n+1} V t^{n+1}},
\]
where ${\rm Sym}^m V$ is the $m$-th symmetric power of $V$ and $\bigwedge^m V$ is the $m$-th exterior power of $V$. Moreover, the character of the above representation is $1/\det(I_{n+1} - t\rho)$.

We often write $H^*$ for $H^*(P; G)$ if $P$ and $G$ are clear from context and we write $H^*_m$ for the coefficient of $t^m$ in $H^*$. For each $g \in G$, we have that ${\rm EES}_P[t](g) = \sum_{m \ge 0} \chi_m(g) t^m = {\rm ES}_{P_g}[t]$ is the Ehrhart series of the fixed polytope $P_g = \{x \in P : g(x) = x\}$. If the coefficients of $H^*(P; G)$ are eventually zero, then we call it the \emph{equivariant $H^*$-polynomial} of $P$ with respect to the action of $G$.

\begin{example}\label{ex: EEH S2 example polytope}
Assume the setup in Example~\ref{ex: EEH setup S2} and let $P = \conv\{(1,0), (0,1)\}$ be a line segment. Let us compute the equivariant $H^*$-series of $P$ under the action of $G = \{e, \sigma\}$. The denominator of the equivariant Ehrhart series is given by
\[
\det(I_2 - t\rho(e)) = (1-t)^2 \text{ and }
\det(I_2 - t\rho(\sigma)) = \det\begin{pmatrix}
    1 & -t \\
    -t & 1
\end{pmatrix} = 1-t^2.
\]
The polytope $P$ and its dilations are shown in Figure~\ref{fig: EEH example polytope}. In particular, the $\sigma$-fixed lattice points are shaded. The Ehrhart series of $P_e = P$ and $P_\sigma = \{(1/2, 1/2)\}$ are given by
\begin{align*}
    {\rm ES}_{P_e}[t] &= 1 + 2t + 3t^2 + \dots = 1/(1-t)^2 &= 1/\det(I_2 - t\rho(e)),\\
    {\rm ES}_{P_\sigma}[t] &= 1 + t^2 + t^4 + \dots = 1/(1-t^2) &= 1/\det(I_2 - t\rho(\sigma)). 
\end{align*}
So we have $H^* = \Chi_0$, is the trivial representation.
\end{example}

\begin{definition}\label{def: effective representation}
    Fix a group $G$ and a $G$-invariant polytope $P$. An element of $\mathcal R(G)$ is \emph{effective} if it is a representation, or in other words, it can be expressed as a non-negative integer sum of irreducible representations. We say that the $H^*$-series $H^* = \sum_{m \ge 0} H^*_m t^m$ is \emph{effective} if $H^*_m$ is a effective for each $m \ge 0$.
\end{definition}

\subsection{Hypersimplices}

Fix $0 < k \le n$. We denote by $e_i$ with $i \in [n]$ the standard basis vectors of $\RR^n$. For each subset $I \in \binom{[n]}{k}$, we write $e_I = \sum_{i \in I} e_i$. The \emph{hypersimplex} $\Delta_{k,n} \subseteq \RR^n$ is the convex hull
\[
\Delta_{k,n} = \conv\left\{e_I : I \in \binom{[n]}{k}\right\}
\]
with vertices given by $e_I$ for each $k$-subset $I$ of $[n]$.
The group $S_n$ acts on $\RR^n$ by permutation of its coordinates. The action of $S_n$ satisfies $\sigma(e_I) = e_{\sigma(I)}$, hence the action leaves the polytope $\Delta_{k,n}$ invariant. Following the setup in the previous section and Example~\ref{ex: EEH setup Sn}, the $S_n$-fixed space $A$ is the linear span of $a = (1,1,\dots, 1)$. The hypersimplex lies in the affine space $\RR^{n-1} := \frac kn a +A^\perp$.
We recall, by \cite[Theorem~3.60]{elia2022techniques}, that $H^*(\Delta_{k,n}; S_n)$ is effective and polynomial.

\begin{example}\label{ex: hypersimplex 24}
    Let $S_4$ act by coordinate permutation on $\RR^4$ and consider the $S_4$-invariant hypersimplex $\Delta_{2,4}$, which has vertices
    \begin{align*}
        e_{12} = (1,1,0,0),\, e_{13} = (1,0,1,0),\, e_{14} = (1,0,0,1),\\
        e_{23} = (0,1,1,0),\, e_{24} = (0,1,0,1),\, e_{34} = (0,0,1,1),
    \end{align*}
    as shown in Figure~\ref{fig: hypersimplex 24 whole}.
    The hypersimplex $\Delta_{2,4}$ is a $3$-dimensional polytope that lies in the $S_4$-invariant affine subspace $M_\RR = \{x \in \RR^4 : x_1 + x_2 + x_3 + x_4 = 2\} \cong \RR^3$.
    For each element $\sigma \in S_4$, we obtain a fixed polytope of $\Delta_{2,4}$, which is its intersection with the space of $\sigma$-fixed points. For example, if $\sigma = (1 \ 2)$ then the fixed polytope $(\Delta_{2,4})_{\sigma} = \{x \in \Delta_{2,4} : \sigma(x) = x \}$ is the convex hull of the points
    \[
    e_{12} = (1,1,0,0),\, 
    e_{34} = (0,0,1,1),\, 
    \frac 12(e_{13} + e_{23}) = 
    \left(\frac 12, \frac 12, 1, 0\right),\, 
    \frac 12(e_{14} + e_{24}) = 
    \left(\frac 12, \frac 12, 0, 1\right),
    \]
    see Figure~\ref{fig: hypersimplex 24 slice}. The group $S_4$
    has five conjugacy classes indexed by cycle types. The equivariant $H^*$-series is a quadratic polynomial $H^* = H^*_0 + H^*_1 t + H^*_2 t^2$ with coefficients shown in Table~\ref{tab: coefficients of H* 24}. Theorem~\ref{thm: k=2 H* coeff} shows that $H^*_1 = \rho_2 - \rho_1$ where $\rho_2$ is the permutation action of $S_4$ on the $2$-subsets of $[4]$ and $\rho_1$ is the \emph{natural representation} given by the action of $S_4$ on $[4]$.

    Let $\sigma = (1 \ 2) \in S_4$ be a transposition, which has cycle type $(2,1,1)$. Let $\rho$ denote the representation of $S_4$ by permutation matrices. The evaluation $H^*(\sigma) = 1 + t^2$ tells us that the Ehrhart series of the fixed polytope $(\Delta_{2,4})_\sigma$, see Figure~\ref{fig: hypersimplex 24 slice}, is given by
    \[
    {\rm ES}_{(\Delta_{2,4})_\sigma}[t] = 
    {\rm EES}_{\Delta_{2,4}}[t](\sigma) = 
    \frac{1 + t^2}{\det(I_4 - t\rho(\sigma))} = \frac{1 + t^2}{(1-t)^2(1 - t^2)}
    = \frac{1+2t+2t^2+2t^3+t^4}{(1-t^2)^3}.
    \]
    
    \begin{figure}[t]
        \begin{subfigure}{.45\textwidth}
        \centering
        \begin{tikzpicture}[thick,scale=2]
            \coordinate (a) at (-1, 0);
            \coordinate (b) at (0, 1);
            \coordinate (c) at (1, 0);
            \coordinate (d) at (0, -1);
            \coordinate (e) at (0.25, 0.25);
            \coordinate (f) at (-0.25, -0.25);

            \node[anchor = east] at (a) {$e_{13}$};
            \node[anchor = south] at (b) {$e_{34}$};
            \node[anchor = west] at (c) {$e_{24}$};
            \node[anchor = north] at (d) {$e_{12}$};
            \node[anchor = south west] at (e) {$e_{23}$};
            
            \begin{scope}[thick,dashed,,opacity=0.4]
                \draw (a) -- (e) -- (b);
                \draw (c) -- (e) -- (d);
            \end{scope}
            
            \draw[fill={blue!80},opacity=0.55] (a) -- (b) -- (f);
            \draw[fill={blue!70},opacity=0.55] (b) -- (c) -- (f);
            \draw[fill={blue!90},opacity=0.55] (c) -- (d) -- (f);
            \draw[fill={blue},opacity=0.5] (d) -- (a) -- (f);
            \draw (a) -- (f) -- (b);
            \draw (c) -- (f) -- (d);
            \draw (a) -- (b) -- (c) -- (d) -- cycle;

            \node[anchor = north east] at (f) {$e_{14}$};
            
        \end{tikzpicture}
        \caption{Hypersimplex $\Delta_{2,4}$}
        \label{fig: hypersimplex 24 whole}
        \end{subfigure} 
        \begin{subfigure}{.45\textwidth}
        \centering
        \begin{tikzpicture}[thick,scale=2]
            \coordinate (a) at (-1, 0);
            \coordinate (b) at (0, 1);
            \coordinate (c) at (1, 0);
            \coordinate (d) at (0, -1);
            \coordinate (e) at (0.25, 0.25);
            \coordinate (f) at (-0.25, -0.25);
            \coordinate (g) at (-0.375, 0.125);
            \coordinate (h) at (0.375, -0.125);
            
            \node[anchor = south] at (b) {$e_{34}$};
            \node[anchor = north] at (d) {$e_{12}$};
            \node[anchor = east] at (g) {$\frac 12 (e_{13} + e_{23})$};
            \node[anchor = west] at (h) {$\frac 12 (e_{24} + e_{14})$};
            
            \begin{scope}[thick,dashed,opacity=0.5]
                \draw (a) -- (e) -- (b);
                \draw (c) -- (e) -- (d);
                \draw (a) -- (f) -- (b);
                \draw (c) -- (f) -- (d);
                \draw (a) -- (b) -- (c) -- (d) -- cycle;
            \end{scope}
            
            \draw[fill={blue!80},opacity=0.5] (b) -- (h) -- (d) -- (g) -- cycle;
            \draw (b) -- (h) -- (d) -- (g) -- cycle;
        \end{tikzpicture}
        \caption{Fixed polytope $(\Delta_{2,4})_{(1 \ 2)}$}
        \label{fig: hypersimplex 24 slice}
        \end{subfigure}
        
        \caption{Fixed polytopes of the hypersimplex under the action of $S_4$ in Example~\ref{ex: hypersimplex 24}}
        \label{fig: hypersimplex 24}
    \end{figure}
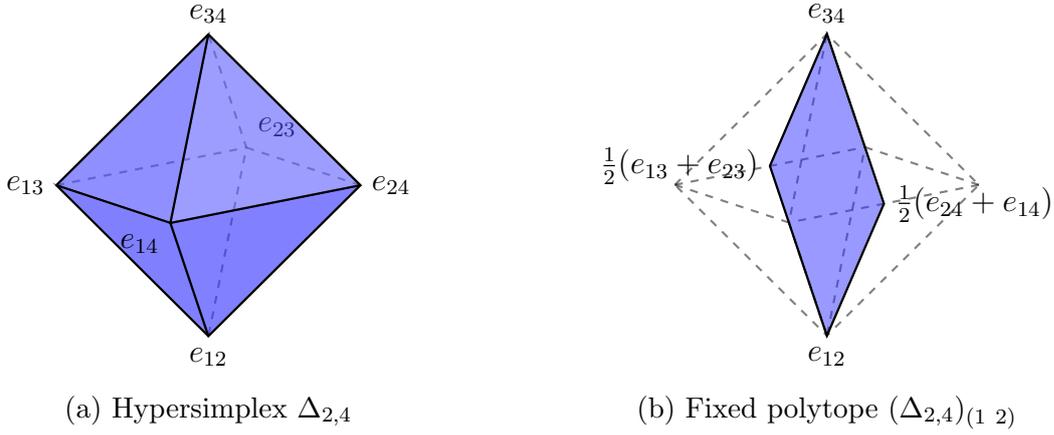
    
    \begin{table}[t]
        \centering
        \begin{tabular}{cccccc}
        \toprule
           \multirow{2}{*}{Character} & \multicolumn{5}{c}{Cycle type} \\
            & $(1,1,1,1)$ & $(2,1,1)$ & $(2,2)$ & $(3,1)$ & $(4)$ \\
        \midrule
           $H^*_0$ & $1$ & $1$ & $1$ & $1$  & $1$ \\
           $H^*_1$ & $2$ & $0$ & $2$ & $-1$ & $0$ \\
           $H^*_2$ & $1$ & $1$ & $1$ & $1$  & $1$ \\
        \midrule
           $\rho_1$ & $4$ & $2$ & $0$ & $1$ & $0$ \\
           $\rho_2$ & $6$ & $2$ & $2$ & $0$ & $0$ \\
        \bottomrule
        \end{tabular}
        \caption{Coefficients of $H^*({\Delta_{2,4}, S_4)}$ and characters in Example~\ref{ex: hypersimplex 24}}
        \label{tab: coefficients of H* 24}
    \end{table}
    
\end{example}

\subsection{Stirling numbers of the second kind}

In Section~\ref{sec: non-hyp dosps}, we count families of DOSPs. For this, we require \emph{Stirling numbers of the second kind}, which, for brevity, we refer to simply as \emph{Stirling numbers}. Given non-negative integers $n$ and $k$, the Stirling number $\stirling nk$ is the number of partitions of the set $[n]$ into $k$ non-empty unlabelled parts. For example the set $[3]$ is partitioned into $2$ non-empty parts in three ways:
$
    \{1,23\},\, \{2,13\},\, \{3,12\}. 
$
Therefore $\stirling 32 = 3$. For further background on Stirling numbers, we refer the reader to \cite{stanley2011enumerative}. 

\begin{proposition}[{\cite[Section~1.9]{stanley2011enumerative}}]\label{prop: stirling2}
    The Stirling numbers satisfy the relationship
    \[
    \sum_{k=0}^n\stirling{n}{k} (x)_k = x^n
    \]
    for all $x$, where $(x)_k=\prod_{i=0}^{k-1} (x-i)$ denotes the $k$-th falling factorial.
\end{proposition}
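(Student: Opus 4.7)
The plan is to prove the identity first for positive integer values of $x$ by a double-counting argument, and then extend to all $x$ by a polynomial argument. Both sides are polynomials in $x$ of degree $n$: the right-hand side is $x^n$, and the left-hand side is a sum of the falling factorials $(x)_k = x(x-1)\cdots(x-k+1)$, each of degree $k \leq n$. If two polynomials of degree $n$ agree at infinitely many values of $x$, they agree as polynomials, so it suffices to verify the identity whenever $x$ is a positive integer.

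The key step is a double count of the set $\mathrm{Fun}([n],[x])$ of all functions $f : [n] \to [x]$. On one hand, $|\mathrm{Fun}([n],[x])| = x^n$. On the other hand, I would classify such functions by the size $k$ of their image. Given a function $f$ with $|\im(f)| = k$, the fibres $f^{-1}(y)$ for $y \in \im(f)$ form a set partition of $[n]$ into exactly $k$ non-empty parts; conversely, specifying $f$ is equivalent to choosing an unordered partition of $[n]$ into $k$ non-empty parts and then specifying an injective function from these $k$ parts to $[x]$. The number of partitions is $\stirling{n}{k}$ by definition, and the number of injective functions from a $k$-set to $[x]$ is $(x)_k$. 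Summing over $k$ gives
\[
x^n = \sum_{k=0}^{n} \stirling{n}{k} (x)_k,
\]
which is the desired identity for every positive integer $x$.

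The argument is essentially routine, and I do not anticipate any genuine obstacle. The only small subtlety is making sure the combinatorial correspondence matches the definition of $\stirling{n}{k}$ used in the paper (partitions into $k$ non-empty unlabelled parts) and that the ordering conventions for the falling factorial $(x)_k = \prod_{i=0}^{k-1}(x-i)$ line up with the count of injections from a $k$-element set into $[x]$. Once the combinatorial identity is established for all positive integers $x$, the polynomial extension is immediate.
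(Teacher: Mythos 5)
Your proof is correct: the double count of functions $[n]\to[x]$ by image size, followed by the polynomial-identity argument to extend from positive integers to all $x$, is exactly the classical proof of this identity. The paper itself offers no proof, only a citation to Stanley's \emph{Enumerative Combinatorics}, and the argument given there is essentially the one you wrote, so there is nothing to reconcile.
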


\section{Coefficients of the equivariant \texorpdfstring{$H^*$}{H*}-polynomial}\label{sec: coeffs of H* poly}

The main result of this section is Theorem~\ref{thm: H* coefficient in terms of representations}, which is a formula for the coefficients of the equivariant $H^*$-polynomial of the hypersimplex $\Delta_{k,n}$ under the action of $S_n$. The formula is given in terms of the following combinatorial families of objects.

\begin{definition}\label{def: Phi set and I_h set}
    Fix $\sigma \in S_n$ a permutation with cycle type $(s_1, \dots, s_r)$. For each $0 < k < n$ and $m \ge 0$, we define the set of functions
    \[
        \Phi_k(\sigma, m) = \left\{
            f \colon [r] \rightarrow \{0,1,\dots,k-1\} : \sum_{i = 1}^r f(i)s_i = m
        \right\}.
    \]
    By convention, we define $\Phi_k(\sigma, m) = \emptyset$ for all $m < 0$.
    For each $h \ge 0$, we define the set
    \[
        \mathcal I_h = \left\{
            I = (I_1,I_2, \dots, I_{k-1}) \in \ZZ^{k-1}_{\ge 0} : \sum_{i = 1}^{k-1} i \cdot I_i = h 
        \right\}.
    \]
    For each $I \in \mathcal I_h$, we write $|I| = I_1 + I_2 + \dots + I_{k-1}$.
\end{definition}

We observe that the map $\sigma \mapsto |\Phi_k(\sigma, m)|$ is a permutation character of $S_n$.

\begin{proposition}\label{prop: Phi set is perm rep}
    Fix $0 < k < n$ and $m \ge 0$. Let $\Chi$ be the permutation character of $S_n$ acting on the set of function functions $\{f \colon [n] \rightarrow \{0,1, \dots, k-1\} : \sum_{i = 1}^n f(i) = m\}$ by $(\sigma \cdot f)(i) = f(\sigma^{-1}(i))$. Then $\Chi(\sigma) = |\Phi_k(\sigma, m)|$.
\end{proposition}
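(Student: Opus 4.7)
The plan is to use the basic fact that the character of a permutation representation evaluated at $\sigma$ equals the number of elements fixed by $\sigma$. So I would identify the $\sigma$-fixed points in the set
\[
X_{k,m} = \left\{ f \colon [n] \to \{0, 1, \dots, k-1\} : \sum_{i=1}^n f(i) = m\right\}
\]
with the set $\Phi_k(\sigma, m)$ of Definition~\ref{def: Phi set and I_h set}.

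First I would observe that the $S_n$-action $(\sigma \cdot f)(i) = f(\sigma^{-1}(i))$ preserves $X_{k,m}$, since $\sum_i f(\sigma^{-1}(i)) = \sum_j f(j) = m$ via the substitution $j = \sigma^{-1}(i)$. This shows $X_{k,m}$ carries a genuine permutation action, so the character $\Chi$ is well-defined and takes the value $\Chi(\sigma) = |\{f \in X_{k,m} : \sigma \cdot f = f\}|$.

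Next, I would unpack the fixed-point condition: $\sigma \cdot f = f$ means $f(\sigma^{-1}(i)) = f(i)$ for all $i \in [n]$, equivalently $f(\sigma(i)) = f(i)$. Thus $f$ is constant on each cycle of $\sigma$. If $\sigma$ has cycle type $(s_1, \dots, s_r)$ with cycle sets $C_1, \dots, C_r$ (where $|C_i| = s_i$), then a $\sigma$-fixed $f \in X_{k,m}$ is determined by, and freely chosen as, a tuple of values $(v_1, \dots, v_r) \in \{0, 1, \dots, k-1\}^r$ where $v_i$ is the common value of $f$ on $C_i$. The condition $\sum_{i \in [n]} f(i) = m$ translates to $\sum_{i=1}^r v_i s_i = m$.

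Finally, the assignment $f \mapsto (i \mapsto v_i)$ gives a bijection between $\sigma$-fixed elements of $X_{k,m}$ and functions $g \colon [r] \to \{0,1,\dots,k-1\}$ with $\sum_{i=1}^r g(i) s_i = m$; that is, with $\Phi_k(\sigma, m)$. Therefore $\Chi(\sigma) = |\Phi_k(\sigma, m)|$, as claimed. There is no real obstacle here — the statement is essentially a direct unpacking of the definition of a permutation character together with the observation that $\sigma$-fixed functions are precisely those that are constant on cycles.
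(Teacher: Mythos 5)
Your proof is correct and is exactly the argument the paper has in mind; the paper simply states ``follows immediately from the definition,'' and your write-up supplies the standard details (fixed points of the permutation action are the functions constant on cycles, which biject with $\Phi_k(\sigma,m)$). No issues.
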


\begin{proof}
    Follows immediately from the definition.
\end{proof}

The main result of this section is the following formula for the coefficients of $H^*(\Delta_{k,n}; S_n)$ in terms of the permutation characters $|\Phi_k(\sigma, m)|$ and the elements of $\mathcal I_h$.

\begin{theorem}\label{thm: H* coefficient in terms of representations}
    Fix $0 < k < n$ and $\sigma \in S_n$. For each $i \in [n]$, denote by $\lambda_i$ the number of cycles of $\sigma$ of length $i$. For each $m \ge 0$, let $H^*_m$ be the coefficient of $t^m$ in $H^*(\Delta_{k,n}; S_n)[t]$. Then
    \[
    H^*_m(\sigma) = \sum_{h = 0}^{k-1}\left(
    \sum_{I \in \mathcal I_h} (-1)^{|I|} \binom{\lambda_1}{I_1} \cdots \binom{\lambda_{k-1}}{I_{k-1}}
    \right)|\Phi_{k-h}(\sigma, m(k-h) - h)|.
    \]
\end{theorem}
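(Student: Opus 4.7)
The strategy is to combine inclusion--exclusion on the permutation character $\chi_m(\sigma)$ with a cycle-wise generating function factorisation. Let $s_1,\dots,s_r$ be the cycle lengths of $\sigma$. The starting observation is that a lattice point of $m\Delta_{k,n}$ is $\sigma$-fixed iff it is constant along each cycle of $\sigma$; writing $y_j$ for the common value on the $j$-th cycle, $\chi_m(\sigma)$ counts tuples $(y_j)_{j=1}^r$ with $0\le y_j\le m$ and $\sum_j s_j y_j = mk$. I would apply inclusion--exclusion to the upper bounds (substituting $z_j = y_j - (m+1)$ for $j$ in a subset $T$) and group by $h = \sum_{j\in T} s_j$ to obtain
\[
\chi_m(\sigma) \;=\; \sum_{h=0}^{k-1} c_h\, \tilde N(m(k-h) - h,\, \sigma), \qquad c_h \;=\; \sum_{I\in\mathcal I_h} (-1)^{|I|} \prod_{i=1}^{k-1}\binom{\lambda_i}{I_i},
\]
where $\tilde N(M,\sigma) := [z^M]\prod_j (1-z^{s_j})^{-1}$ counts nonnegative integer tuples $(z_j)_{j=1}^r$ with $\sum_j s_j z_j = M$. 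Only $h \le k-1$ contributes because the argument of $\tilde N$ is negative otherwise, and the unrestricted index set $\mathcal I_h$ is fine because $\binom{\lambda_i}{I_i} = 0$ whenever $I_i > \lambda_i$.

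The main obstacle is then to convert this expression for $\chi_m(\sigma)$ into the stated formula for $H^*_m(\sigma)$. My plan is to prove the generating function identity, for each $h \in \{0,\dots,k-1\}$ with $k' := k-h$,
\[
\prod_j (1 - t^{s_j}) \cdot \sum_m \tilde N(m k' - h,\, \sigma)\, t^m \;=\; \sum_m |\Phi_{k'}(\sigma,\, m k' - h)|\, t^m.
\]
The key input is the cycle-wise factorisation $1 - z^{k' s_j} = (1 - z^{s_j})(1 + z^{s_j} + \cdots + z^{(k'-1)s_j})$, which, after taking the product over $j$, reads $\prod_j (1 - z^{k' s_j}) = \prod_j (1 - z^{s_j}) \cdot G_{k'}(\sigma,z)$ with $G_{k'}(\sigma,z) := \sum_p |\Phi_{k'}(\sigma,p)|\, z^p$. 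Rearranging and expanding $\prod_j (1 - z^{k' s_j})^{-1} = \sum_q \tilde N(q,\sigma)\, z^{k' q}$ yields the convolution $\tilde N(M,\sigma) = \sum_{p+k'q = M} |\Phi_{k'}(\sigma,p)|\, \tilde N(q,\sigma)$. Specialising $M = m k' - h$ and summing in $t^m$ factors the resulting series as $\bigl(\prod_j (1-t^{s_j})^{-1}\bigr) \cdot \sum_m |\Phi_{k'}(\sigma, mk'-h)|\, t^m$, which rearranges to the claimed identity.

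Combining the two steps, multiplying $F(\sigma;t) := \sum_m \chi_m(\sigma) t^m$ by $\prod_j (1-t^{s_j})$ and applying the inclusion--exclusion expansion followed by the generating function identity gives
\[
H^*(\sigma)[t] \;=\; \sum_{h=0}^{k-1} c_h \sum_m |\Phi_{k-h}(\sigma, m(k-h) - h)|\, t^m,
\]
and reading off $[t^m]$ is the theorem. I expect the delicate point to be spotting the right factorisation and seeing the convolution unfold as a product of two power series in $t$; the inclusion--exclusion step itself is relatively routine once one uses the substitution $z_j = y_j - (m+1)$.
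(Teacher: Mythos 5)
Your argument is correct, and its first half coincides in substance with the paper's: the inclusion--exclusion on the upper bounds $0 \le y_j \le m$, grouped by $h = \sum_{j \in T} s_j$, is exactly the computation of $[f_d]_{kd}$ in Lemma~\ref{lem: H* coefficient for Delta_l,n} via the expansion of $\prod_j \bigl(1 - t^{(d+1)s_j}\bigr)/\bigl(1 - t^{s_j}\bigr)$, and your coefficients $c_h$ are precisely the paper's signed sums over $\mathcal I_h$ (the terms with $h \ge k$ vanish in both treatments because the argument of $\tilde N$, resp.\ the index of $u$, goes negative). Where you genuinely diverge is in the second half. The paper clears the denominator $\prod_j (1-t^{s_j})$ as a signed sum over subsets $S \subseteq [r]$ and then establishes the key identity $\sum_{S} (-1)^{|S|} u_{M - k'\Sigma S} = |\Phi_{k'}(\sigma, M)|$ (Lemma~\ref{lem: Delta_l,n sum in terms of reps}) by induction on the number of cycles $r$. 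You instead use the telescoping factorisation $\prod_j (1 - z^{k' s_j}) = \prod_j (1 - z^{s_j}) \cdot G_{k'}(\sigma, z)$ with $G_{k'}(\sigma, z) = \prod_j \bigl(1 + z^{s_j} + \cdots + z^{(k'-1)s_j}\bigr) = \sum_p |\Phi_{k'}(\sigma, p)| z^p$, derive the convolution $\tilde N(M, \sigma) = \sum_{p + k'q = M} |\Phi_{k'}(\sigma, p)|\, \tilde N(q, \sigma)$, and factor the resulting double sum in the dilation variable $t$ (the re-indexing $m' = m - q$ is legitimate because $|\Phi_{k'}(\sigma, p)| = 0$ for $p < 0$). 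The two are equivalent --- indeed your factorisation yields a one-line proof of Lemma~\ref{lem: Delta_l,n sum in terms of reps}, since $\sum_S (-1)^{|S|} u_{M - k'\Sigma S} = [z^M]\, \prod_j(1 - z^{k's_j})\, u(z) = [z^M]\, G_{k'}(\sigma, z)$ --- and it makes the combinatorial meaning of $|\Phi_{k'}|$ as coefficients of a product of truncated geometric series completely transparent, whereas the paper's induction on $r$ is more hands-on and buys nothing extra here. Your route is arguably the cleaner of the two.
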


We will give a proof of the theorem at the end of the section after we have introduced notation and developed the necessary tools. For now, an immediate consequence of this theorem is a combinatorial proof that the $H^*$-series $H^*(\Delta_{k,n}; S_n)$ is a polynomial and, moreover, we can write down its degree. 

\begin{corollary}\label{cor: degree of H* poly}
   Fix $0 < k < n$. We have that $H^*(\Delta_{k,n}; S_n)$ is a polynomial of degree $\lfloor (k-1)n/k \rfloor$.
\end{corollary}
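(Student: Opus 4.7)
The plan is to extract both bounds directly from the formula in Theorem~\ref{thm: H* coefficient in terms of representations}. I would first show that every factor $|\Phi_{k-h}(\sigma, m(k-h)-h)|$ vanishes once $m$ exceeds $\lfloor(k-1)n/k\rfloor$, which gives the upper bound on the degree uniformly in $\sigma$. I would then evaluate at $\sigma = \mathrm{id}$ to recover the ordinary $h^*$-polynomial of $\Delta_{k,n}$ and read off the matching lower bound.

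For the vanishing, recall that $\Phi_{k-h}(\sigma, M)$ is non-empty only when $0 \le M \le (k-h-1)n$, because each $f(i) \in \{0,\dots,k-h-1\}$ and $\sum_{i=1}^r s_i = n$. Substituting $M = m(k-h) - h$, the upper inequality rearranges to $m \le n - (n-h)/(k-h)$. Since $n > k$, differentiating with respect to $h$ gives $-(n-k)/(k-h)^2 < 0$, so the right-hand side is maximised at $h = 0$ and equals $(k-1)n/k$. Consequently, for any integer $m > \lfloor (k-1)n/k\rfloor$ and every $h \in \{0,\dots,k-1\}$, the set $\Phi_{k-h}(\sigma, m(k-h)-h)$ is empty, and the formula in Theorem~\ref{thm: H* coefficient in terms of representations} forces $H^*_m(\sigma) = 0$ for every $\sigma \in S_n$. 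This alone already gives a combinatorial proof that $H^*(\Delta_{k,n}; S_n)$ is a polynomial.

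For the matching lower bound, I would specialise to $\sigma = \mathrm{id}$: the character value $H^*_m(\mathrm{id})$ coincides with the $m$-th coefficient of the classical $h^*$-polynomial $h^*_{\Delta_{k,n}}(t)$, whose degree is known to equal $\lfloor (k-1)n/k\rfloor$. This can be read off from Katzman's closed form \cite[Corollary~2.9]{Katzman2005Hilbert}, or obtained combinatorially by exhibiting a hypersimplicial $(k,n)$-DOSP with winding number $\lfloor (k-1)n/k\rfloor$ and invoking Kim's theorem \cite{kim2020combinatorial}. Together with the vanishing above, this pins the degree of $H^*(\Delta_{k,n}; S_n)$ at $\lfloor(k-1)n/k\rfloor$ exactly.

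The only mildly delicate step is the monotonicity check in $h$: one must make sure that shrinking the alphabet $\{0,\dots,k-h-1\}$ by enlarging $h$ does not compensate for the lost $-h$ on the right and revive a non-empty $\Phi$-set at some larger $m$. The short derivative calculation above dispatches this uniformly, so I do not anticipate any serious conceptual obstacle beyond this bookkeeping.
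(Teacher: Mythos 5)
Your upper bound is correct and is essentially the paper's own argument: the paper verifies directly that $(k-h-1)n < m(k-h)-h$ whenever $km>(k-1)n$, which is the same computation you package as monotonicity in $h$ of the threshold $n-(n-h)/(k-h)$. This part gives polynomiality and the bound $\deg H^*\le\lfloor(k-1)n/k\rfloor$ uniformly in $\sigma$, exactly as in the paper.

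The lower bound is where you diverge from the paper, and where there is a genuine problem --- one that in fact afflicts the statement itself. The paper argues that $\Phi_k(e,mk)\neq\emptyset$ whenever $km\le(k-1)n$ and concludes that the degree is at least $\lfloor(k-1)n/k\rfloor$; this only shows that the $h=0$ summand of $H^*_m(e)$ in Theorem~\ref{thm: H* coefficient in terms of representations} is positive and ignores possible cancellation against the $h\ge 1$ summands. You instead reduce to $\sigma=\mathrm{id}$ and invoke the degree of the classical $h^*$-polynomial, which is the sounder strategy, but the value you quote, $\deg h^*_{\Delta_{k,n}}=\lfloor(k-1)n/k\rfloor$, is only correct for $k\le n/2$. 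Since $\Delta_{k,n}\cong\Delta_{n-k,n}$ as lattice polytopes, $\deg h^*_{\Delta_{k,n}}$ is symmetric under $k\leftrightarrow n-k$ (by the codegree formula it equals $n-\max(\lceil n/k\rceil,\lceil n/(n-k)\rceil)$), whereas $\lfloor(k-1)n/k\rfloor$ is not. Concretely, $\Delta_{3,4}$ is a unimodular simplex, so $h^*_{\Delta_{3,4}}=1$ and $H^*(\Delta_{3,4};S_4)=\Chi_0$ has degree $0$, not $\lfloor 8/3\rfloor=2$: at $\sigma=e$ and $m=2$ the three summands $h=0,1,2$ of the formula in Theorem~\ref{thm: H* coefficient in terms of representations} are $10$, $-16$ and $6$, summing to zero, and the same vanishing occurs on every conjugacy class of $S_4$. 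For the same reason there is no hypersimplicial $(3,4)$-DOSP of winding number $2$, so the DOSP route you offer as an alternative also fails there. Carried out honestly, your reduction to $\deg h^*_{\Delta_{k,n}}$ would have exposed that the corollary, and the paper's own lower-bound argument, are incorrect for $k>n/2$; the statement should either be restricted to $k\le n/2$ or restated with the symmetrised degree $n-\max(\lceil n/k\rceil,\lceil n/(n-k)\rceil)$.
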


\begin{proof}
Let $\sigma \in S_n$ be a permutation with cycle type $(s_1, \dots, s_r)$ and fix $h \ge 0$. For any function $f \colon [r] \rightarrow \{0,1, \dots, k-h-1\}$, we have $\sum_{i = 1}^r f(i)s_i \le (k-h-1)n$. If $m$ satisfies $(k-1)n < km$ then we have
\[
    \sum_{i = 1}^r f(i)s_i \le (k-h-1)n < \frac{(k-h-1)km}{k-1} = 
    km - \frac{khm}{k-1} = m(k - h) - \frac{hm}{k-1} < m(k-h) -h,
\]
hence the set $\Phi_k(\sigma, m(k-h)-h)$ is empty. So, by Theorem~\ref{thm: H* coefficient in terms of representations}, the coefficient $H^*_m(\sigma) = 0$ for all $m > (k-1)n/k$. In particular, the $H^*$-series is a polynomial. 

On the other hand, let $e \in S_n$ be the identity. The set $\Phi_{k}(e, mk)$ is non-empty if and only if $m$ satisfies $(k-1)n \ge k m$. Therefore, the degree of the $H^*$-polynomial is at least $\lfloor (k-1)n/k \rfloor$, which concludes the proof of the result.
\end{proof}

\subsection{Katzman's method}

In this section, we apply the method used by Katzman \cite{Katzman2005Hilbert} to obtain a formula for the coefficients of the equivariant $H^*$-series. The main result of this section is Lemma~\ref{lem: H* coefficient for Delta_l,n}, which is the first step towards Theorem~\ref{thm: H* coefficient in terms of representations}.
We introduce two pieces of useful notation. 
Given $\sigma \in S_n$ with cycle type $(s_1, \dots, s_r)$, we define the formal power series
\[
    u^\sigma = \sum_{i \ge 0} u^\sigma_i t^i := \prod_{i = 1}^k (1 + t^{s_i} + t^{2s_i} + \dots) =  \prod_{i = 1}^k \frac{1}{1 - t^{s_i}} \in \ZZ[[t]].
\]
If $\sigma$ is clear from context, then we write $u$ for $u^\sigma$ and $u_i$ for $u^\sigma_i$. For each subset $S \subseteq [r]$, we write $\Sigma^\sigma S = \sum_{i \in S} s_i$. If the permutation $\sigma$ is clear from context then we write $\Sigma S$ for $\Sigma^\sigma S$.

\begin{lemma}\label{lem: H* coefficient for Delta_l,n}
Fix $0 < k < n$ and $\sigma \in S_n$ with cycle type $(s_1, \dots, s_r)$. For each $i \in [n]$, let $\lambda_i$ be the number of length $i$ cycles of $\sigma$. For each $m \ge 0$, the coefficient of $t^m$ in $H^*(\Delta_{k,n}; S_n)[t]$ is
\[
H^*_m(\sigma) = 
    \sum_{S \subseteq [r]} (-1)^{|S|}
    \sum_{h = 0}^{k - 1} \left(
        \sum_{I \in \mathcal I_h} (-1)^{|I|} \binom{\lambda_1}{I_1} \cdots \binom{\lambda_{k-1}}{I_{k-1}}
    \right) u_{(m-\Sigma S)(k-h) - h}.
\]
\end{lemma}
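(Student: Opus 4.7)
The plan is to adapt Katzman's inclusion-exclusion argument \cite{Katzman2005Hilbert} to the equivariant setting, carrying out the counting one cycle at a time instead of one coordinate at a time. Fix $\sigma \in S_n$ with cycle lengths $(s_1,\dots,s_r)$. Since $\rho(\sigma)$ is a permutation matrix of the same cycle type, its eigenvalues are $s_i$-th roots of unity ($1 \le i \le r$), and so $\det(I_n - t\rho(\sigma)) = \prod_{i=1}^r (1-t^{s_i})$. From the defining relation ${\rm EES}_P[t] = H^*(P;G)[t]/\det(I - t\rho)$ and expanding this product as $\sum_{S \subseteq [r]} (-1)^{|S|} t^{\Sigma S}$, I would first derive
\[
H^*_m(\sigma) = \sum_{S \subseteq [r]} (-1)^{|S|}\,\Chi_{m - \Sigma S}(\sigma),
\]
with the convention $\Chi_{m'}(\sigma) = 0$ for $m' < 0$.

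Next, I would give a closed form for $\Chi_{m'}(\sigma)$ when $m' \ge 0$. A lattice point of $m'\Delta_{k,n}$ is $\sigma$-fixed if and only if it is constant on each cycle set $C_j$, so writing $x_i = y_j$ for $i \in C_j$ identifies these points with the tuples $y \in \ZZ^r$ satisfying $\sum_j s_j y_j = m'k$ and $0 \le y_j \le m'$. Applying inclusion-exclusion on the upper bounds (substituting $y_j \mapsto y_j - (m'+1)$ for $j \in T$) and using that the number of non-negative integer solutions to $\sum_j s_j y_j = N$ equals the coefficient $u^\sigma_N$ of $t^N$ in $\prod_j (1-t^{s_j})^{-1}$ yields
\[
\Chi_{m'}(\sigma) = \sum_{T \subseteq [r]} (-1)^{|T|}\,u_{m'(k - \Sigma T) - \Sigma T}.
\]

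I would then substitute this into the previous display and group the inner $T$-sum by the value $h = \Sigma T$. Each $u_{(m - \Sigma S)(k-h) - h}$ picks up the coefficient $\sum_{T\,:\,\Sigma T = h} (-1)^{|T|}$, which is the coefficient of $z^h$ in $\prod_j (1-z^j)^{\lambda_j}$. Expanding each factor by the binomial theorem and comparing coefficients rewrites this as $\sum_{I}(-1)^{|I|}\prod_j \binom{\lambda_j}{I_j}$, where $I=(I_j)$ runs over non-negative integer vectors satisfying $\sum_j j I_j = h$.

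Finally, I would show that the $h$-sum truncates at $k-1$. When $0 \le h \le k-1$, any $I$ with $\sum_j jI_j = h$ automatically satisfies $I_j = 0$ for $j \ge k$, so the unrestricted sum agrees with the sum over $\mathcal{I}_h \subseteq \ZZ_{\ge 0}^{k-1}$. When $h \ge k$ and $m - \Sigma S \ge 0$, the exponent $(m-\Sigma S)(k-h)-h$ is negative (since $k-h \le 0$ and $h > 0$), so $u_{\cdots} = 0$; and when $m - \Sigma S < 0$, each exponent in the claimed formula is already negative for every $h \in \{0,\dots,k-1\}$ (since $k-h > 0$), so the sum reduces to $0 = \Chi_{m-\Sigma S}(\sigma)$ as required. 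The main obstacle is precisely this endgame bookkeeping: the sign of $m - \Sigma S$ and the size of $h$ interact in four sub-cases, but once each is checked, nothing more than Katzman's method transposed to cycle-level counting is needed.
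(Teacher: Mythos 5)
Your proof is correct and is essentially the paper's own argument in a slightly reordered form: the paper expands $\prod_i(1-t^{(d+1)s_i})/(1-t^{s_i})$ and extracts the coefficient of $t^{kd}$ before clearing the denominator, whereas you clear the denominator first and then perform the same inclusion--exclusion on the upper bounds $y_j\le m'$, grouped by $h=\Sigma T$ via the binomial expansion of $\prod_j(1-z^j)^{\lambda_j}$ — these are the same computation. Your handling of the truncation at $h=k-1$ and of the case $m-\Sigma S<0$ is in fact more explicit than the paper's.
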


\begin{proof}
By conjugating $\sigma$, we may assume without loss of generality that 
\[
\sigma = (1 \ 2 \ \dots \ s_1)(s_1+1 \ s_1+2 \ \dots \ s_1+s_2 ) \dots (n-s_r+1 \ n-s_r+2 \ \dots \ n).
\]
Fix $d \ge 0$. We have that $(d\Delta_{k,n})_\sigma \cap \ZZ^n$ is equal to
\[
    \left\{
    (
    \underbrace{x_1, \dots, x_1}_{s_1}, 
    \underbrace{x_2, \dots, x_2}_{s_2}, \dots, 
    \underbrace{x_r, \dots, x_r}_{s_r}) \in \ZZ^n : 
    \sum_{i = 1}^r x_i s_i = k d,\, 
    0 \le x_i \le d \text{ for all } i \in [r]
    \right\}.
\] 
So there is a bijection between the set solutions $(x_1, x_2, \dots, x_r) \in \{0,1, \dots, d \}^r$ to the equation $\sum_{i = 1}^r x_i s_i = kd$ and lattice points $(d \Delta_{k,n})_\sigma \cap \ZZ^n$. Consider the polynomial
\[
    f_d(t) = \prod_{i = 1}^r (1 + t^{s_i} + t^{2s_i} + \dots + t^{ds_i}) = 
    \prod_{i = 1}^r \frac{1 - t^{(d+1)s_i}}{1 - t^{s_i}}.
\]
For each solution $(x_1, \dots, x_r)$ to the above equation, we have a term $t^{kd} = t^{x_1 s_1} t^{x_2 s_2} \dots t^{x_r s_r}$ in the expansion of $f_d(t)$. Moreover, each term $t^{kd}$ in the expansion of $f_d(t)$ arises from such a solution. So we have that $|(d \Delta_{k,n})_\sigma \cap \ZZ^n|$ is equal to the coefficient of $t^{k d}$ in $f_d(t)$, which we denote by $[f_d]_{k d}$.

For each $s_i \ge k$, we have that $t^{(d+1)s_i}$ does not divide $t^{k d}$. It follows that $[f_d]_{k d}$ is equal to the coefficient of $t^{k d}$ in the formal power series:
\begin{align*}
[f_d]_{k d} 
    &= \left[
        \prod_{j = 1}^{k - 1} (1 - t^{(d+1)j})^{\lambda_j}
        \prod_{i = 1}^r \frac{1}{1 - t^{s_i}}
        \right]_{k d} \\
    &= \left[
        \prod_{j = 1}^{k - 1} 
        \sum_{h = 0}^{\lambda_j} (-1)^h \binom{\lambda_j}{h} t^{(d+1)jh}
        \cdot u
        \right]_{k d}\\
    &= \left[
        \sum_{h = 0}^{k - 1}
            \sum_{I \in \mathcal I_h} (-1)^{|I|} \binom{\lambda_1}{I_1} \binom{\lambda_2}{I_2} \cdots \binom{\lambda_{\ell-1}}{I_{k-1}}
        t^{(d+1)h}
        \cdot u
        \right]_{k d} 
        \\
    &=
        \sum_{h = 0}^{k - 1}
            \sum_{I \in \mathcal I_h} (-1)^{|I|} \binom{\lambda_1}{I_1} \binom{\lambda_2}{I_2} \cdots \binom{\lambda_{\ell-1}}{I_{k-1}}
        u_{kd - (d+1)h}.\\
\end{align*}
So the Ehrhart series of $(\Delta_{k,n})_\sigma$ is given by
\[
    \sum_{d \ge 0} [f_d]_{kd} t^d
    = \sum_{d \ge 0} \left(
    \sum_{h = 0}^{k - 1}
            \sum_{I \in \mathcal I_h} (-1)^{|I|} \binom{\lambda_1}{I_1} \cdots \binom{\lambda_{k-1}}{I_{k-1}}
        u_{k d - h(d+1)}
    \right) t^d
    = \frac{H^*[t](\sigma)}{\prod_{i = 1}^r(1 - t^{s_i})},
\]
where the right-most equality follows from definition of the equivariant $H^*$-series and the denominator comes from $\det(I_n - t\rho(\sigma))$ where $\rho(\sigma)$ is the permutation matrix of $\sigma$ and $I_n$ is the identity matrix of size $n$. So, by clearing the denominator, we obtain a formula for the coefficients of equivariant $H^*$-series. For each $m \ge 0$, we have

\begin{align*}
H^*_m(\sigma) &= 
    \left[
        \prod_{i = 1}^r (1 - t^{s_i})
        \sum_{d \ge 0} \left(
        \sum_{h = 0}^{k - 1} 
                \sum_{I \in \mathcal I_h} (-1)^{|I|} \binom{\lambda_1}{I_1} \cdots \binom{\lambda_{k-1}}{I_{k-1}}
            u_{kd - h(d+1)}
        \right) t^d
    \right]_m \\
&= 
    \sum_{S \subseteq [r]} (-1)^{|S|}
    \sum_{h = 0}^{k - 1} \left(
        \sum_{I \in \mathcal I_h} (-1)^{|I|} \binom{\lambda_1}{I_1} \cdots \binom{\lambda_{k-1}}{i_{\ell-1}}
    \right) u_{(m-\Sigma S)(k-h) - h}.
\end{align*}
This concludes the proof of the result.
\end{proof}

We illustrate some of the main steps of the above proof with the following example.

\begin{example}
    Let $k = 3$. In this case, we consider the sets $\mathcal I_0 =  \{(0,0)\}$, $\mathcal I_1 = \{(1,0)\}$, and $\mathcal I_2 = \{(2,0), (0,1)\}$.
    So, we have
    \[
        [f_d]_{3d} = 
        \left[
        \left(
            1 - 
            \lambda_1 t^{d+1} +
            \left(
                \binom{\lambda_1}{2} - 
                \lambda_2
            \right) t^{2(d+1)}
        \right)u^\sigma
        \right]_{3 d}
        = 
        u_{3d} - 
        \lambda_1 u_{2d-1} +
        \left(
            \binom{\lambda_1}{2} - 
            \lambda_2
        \right) u_{d-2}.
    \]
    The Ehrhart series of $(\Delta_{3,n})_\sigma$ is given by
    \[
        \frac{H^*[t](\sigma)}{\prod_{i = 1}^r (1-t^{s_i})}
        =
        \sum_{d \ge 0} 
        \left(
            u_{3d} - 
            \lambda_1 u_{2d - 1} + 
            \left(
                \binom{\lambda_1}{2} - 
                \lambda_2
            \right) u_{d-2}
        \right) t^d.
    \]
    The coefficient of $t^m$ in the $H^*$-series is given by
    \[
        H^*_m(\sigma) = \sum_{S \subseteq [r]} (-1)^{|S|} \left(
            u_{3(m - \Sigma S)} - 
            \lambda_1 u_{2(m - \Sigma S) - 1} +
            \left(
                \binom{\lambda_1}{2} - 
                \lambda_2
            \right) u_{m - \Sigma S - 2}
        \right).
    \]
\end{example}

\subsection{Permutation-representation interpretation}
In this section we prove Theorem~\ref{thm: H* coefficient in terms of representations}. We recall the sets of functions $\Phi_{k}(\sigma, m)$ from Definition~\ref{def: Phi set and I_h set}. We use these sets of functions to give an interpretation of terms appearing in formula for $H^*_m$ in Lemma~\ref{lem: H* coefficient for Delta_l,n}. 

\begin{lemma}\label{lem: Delta_l,n sum in terms of reps}
    Fix $0 < k < n$ and $\sigma \in S_n$ with cycle type $(s_1, \dots, s_r)$. With our usual notation, we have
    \[
        \sum_{S \subseteq [r]} (-1)^{|S|} u_{m - k \Sigma S} = |\Phi_k(\sigma, m)|.
    \]
\end{lemma}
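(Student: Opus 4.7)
The plan is to recognise the identity as a standard inclusion-exclusion relating weighted compositions with and without upper bounds on the parts.

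First I would unpack $u_{m - k\Sigma S}$ combinatorially. Since $u = \prod_{i=1}^r (1-t^{s_i})^{-1}$, the coefficient $u_j$ counts tuples $(a_1, \dots, a_r) \in \ZZ_{\ge 0}^r$ with $\sum_{i=1}^r a_i s_i = j$. In particular, $u_{m - k\Sigma S}$ counts tuples $(a_i) \in \ZZ_{\ge 0}^r$ satisfying $\sum a_i s_i = m - k \sum_{i \in S} s_i$. Making the substitution $b_i = a_i + k \cdot \mathbf{1}[i \in S]$, this is the same as counting tuples $(b_1, \dots, b_r) \in \ZZ_{\ge 0}^r$ with $b_i \ge k$ for every $i \in S$ and $\sum_i b_i s_i = m$. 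Writing $T(m) = \{(b_1,\dots,b_r) \in \ZZ_{\ge 0}^r : \sum b_i s_i = m\}$ and $T_i(m) = \{b \in T(m) : b_i \ge k\}$, we have shown
\[
u_{m - k\Sigma S} = \left| \bigcap_{i \in S} T_i(m) \right|.
\]

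Next I would apply inclusion-exclusion. Summing over subsets $S \subseteq [r]$ with sign $(-1)^{|S|}$ gives
\[
\sum_{S \subseteq [r]} (-1)^{|S|} u_{m - k\Sigma S}
= \sum_{S \subseteq [r]} (-1)^{|S|} \left| \bigcap_{i \in S} T_i(m) \right|
= \left| T(m) \setminus \bigcup_{i=1}^r T_i(m) \right|,
\]
which counts tuples $(b_1, \dots, b_r) \in \ZZ_{\ge 0}^r$ satisfying $\sum b_i s_i = m$ and $b_i < k$ for every $i$. By Definition~\ref{def: Phi set and I_h set}, this is exactly $|\Phi_k(\sigma, m)|$, as required.

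The argument is essentially bookkeeping once the substitution $b_i = a_i + k\mathbf{1}[i \in S]$ is in place, so I do not anticipate a real obstacle; the only subtlety is to be careful that $u_{m - k\Sigma S} = 0$ whenever $m - k\Sigma S < 0$, which is consistent with the set on the right being empty in that range (no $b \in \ZZ_{\ge 0}^r$ can satisfy $b_i \ge k$ for $i \in S$ and $\sum b_i s_i = m$ when $m < k\Sigma S$), so the identity still holds term by term.
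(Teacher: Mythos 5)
Your proof is correct, and it takes a genuinely different route from the paper's. You interpret $u_{m-k\Sigma S}$ as counting the solutions $(b_1,\dots,b_r)\in\ZZ_{\ge 0}^r$ of $\sum_i b_i s_i = m$ with $b_i\ge k$ forced for $i\in S$, and then apply a single inclusion--exclusion over $[r]$ to sieve out the tuples with some coordinate $\ge k$, leaving exactly the bounded tuples that constitute $\Phi_k(\sigma,m)$; equivalently, this is the coefficient extraction from the identity $\prod_i(1+t^{s_i}+\cdots+t^{(k-1)s_i}) = \prod_i(1-t^{ks_i})\cdot u$. The paper instead proceeds by induction on the number $r$ of cycles: the base case is an $n$-cycle, and the inductive step peels off the last cycle via $u^\sigma = u^\tau(1+t^s+t^{2s}+\cdots)$ together with a bijection $\Phi_k(\sigma,m)\leftrightarrow\bigsqcup_{j=0}^{k-1}\Phi_k(\tau,m-sj)$ that records the value of $f$ on the new cycle. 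Your sieve argument is shorter and makes the combinatorial content (compositions with bounded parts) more transparent in one step; the paper's induction avoids invoking inclusion--exclusion explicitly but requires the telescoping manipulation of the $u^\sigma_i$ and the decomposition bijection. Your handling of the boundary case $m - k\Sigma S < 0$ (where $u_{m-k\Sigma S}=0$ matches the empty solution set) and of the empty intersection $\bigcap_{i\in\emptyset}T_i(m)=T(m)$ is the only bookkeeping that needs care, and you address the former explicitly.
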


\begin{proof}
    We prove the result by induction on $r$. For the base case, assume $r = 1$, i.e., $\sigma$ is an $n$-cycle. We have $\Sigma \emptyset = 0$ and $\Sigma \{1\} = n$, and  
    $
        u = 1 + t^n + t^{2n} + \dots = 1/(1-t^n).
    $
    Therefore the left-hand sum is given by
    \[
    \sum_{S \subseteq [r]} (-1)^{|S|} u_{m - k \Sigma S} = 
    u_m - u_{m - k n} = \begin{cases}
        1 & \text{if } m \in \{0, n, 2n, \dots, (k-1)n \}, \\
        0 & \text{otherwise}.
    \end{cases}
    \]
    On the other hand, there are exactly $k$ functions $f \colon [r] \rightarrow \{0,1,\dots, k-1 \}$ and any such function $f$ satisfies $\sum_{i} f(i)s_i = f(1)n$. Therefore 
    \[
    |\Phi_k(\sigma, m)| = \begin{cases}
        1 & \text{if } m \in \{0, n, 2n, \dots, (k-1)n \} \\
        0 & \text{otherwise}
    \end{cases}
    = \sum_{S \subseteq [r]} (-1)^{|S|} u_{m - k \Sigma S},
    \]
    and we are done with the base case.

    For the induction step, let $\sigma$ be a permutation with cycle type $(s_1, s_2, \dots, s_{r+1})$ and assume that the result holds for any permutation with $r$ disjoint cycles. Let $\tau$ be a permutation with cycle type $(s_1, \dots, s_r)$. For ease of notation, we define $s := s_{r+1}$. 
    We note that $u^\sigma = u^\tau (1 + t^s + t^{2s} + \dots)$, so it follows that $u^\sigma_i = \sum_{j \ge 0} u^\tau_{i - sj}$.
    Then have the following chain of equalities:
    \begin{align*}
        \sum_{S \subseteq [r+1]} (-1)^{|S|} u^\sigma_{m - k \Sigma^\sigma S}
            &= \sum_{S \subseteq [r]} (-1)^{|S|} 
                \left(
                    u^\sigma_{m - k \Sigma^\sigma S} - u^\sigma_{m - k \Sigma^\sigma S - k s}
                \right)\\
            &= \sum_{S \subseteq [r]} (-1)^{|S|} 
                \left(
                    u^\sigma_{m - k \Sigma^\tau S} - u^\sigma_{m - k \Sigma^\tau S - k s}
                \right) \\ 
            &= \sum_{S \subseteq [r]} (-1)^{|S|} \sum_{j \ge 0}
                \left(
                    u^\tau_{m - k \Sigma^\tau S - sj} - u^\tau_{m - k \Sigma^\tau S - s(j + k)}
                \right) \\ 
            &= \sum_{S \subseteq [k]} (-1)^{|S|} \sum_{j = 0}^{k-1}
                    u^\tau_{m - k \Sigma^\tau S - sj}
            = \sum_{j = 0}^{k-1} |\Phi_k(\tau, m - sj)|.
    \end{align*}
    To conclude the proof, we note that there is a natural bijection between the sets
    \begin{align*}
        \Phi_k(\sigma, m) &\leftrightarrow \bigsqcup_{j = 0}^{k-1} \Phi_k(\tau, m-sj), \\
        f &\mapsto f|_{[r]} \in \Phi_k(\tau, m - sf(k)), \\
        \left(i \mapsto \begin{cases}
            f(i) & \text{if } i \in [r], \\
            j & \text{if } i = r+1
        \end{cases}
        \right)
        &\mapsfrom f \in \Phi_k(\tau, m - sj) \text{ for some } j \in \{0,1,\dots,k-1 \}.
    \end{align*}
    It follows that $|\Phi_k(\sigma, m)| = \sum_{j = 0}^{k-1} |\Phi_k(\tau, m-sj)| = \sum_{S \subseteq [r+1]} (-1)^{|S|} u^\sigma_{m - k \Sigma^\sigma S}$. This concludes the proof of the result.
\end{proof}

With this result, we give a proof of Theorem~\ref{thm: H* coefficient in terms of representations}.

\begin{proof}[Proof of Theorem~\ref{thm: H* coefficient in terms of representations}]
By Lemma~\ref{lem: Delta_l,n sum in terms of reps}, we have
\[
    \sum_{S \subseteq [r]} (-1)^{|S|} u_{m(k-h)-h -(k-h)\Sigma S} = |\Phi_{k-h}(\sigma, m(k-h)-h)|.
\]
By Lemma~\ref{lem: H* coefficient for Delta_l,n}, we have
\begin{align*}
H^*_m(\sigma) &= 
    \sum_{S \subseteq [r]} (-1)^{|S|}
    \sum_{h = 0}^{k - 1} \left(
        \sum_{I \in \mathcal I_h} (-1)^{|I|} \binom{\lambda_1}{I_1} \cdots \binom{\lambda_{k-1}}{I_{k-1}}
    \right) u_{(m-\Sigma S)(k-h) - h}\\
    &=
    \sum_{h = 0}^{k - 1} \left(
        \sum_{I \in \mathcal I_h} (-1)^{|I|} \binom{\lambda_1}{I_1} \cdots \binom{\lambda_{k-1}}{I_{k-1}}
    \right) 
    \sum_{S \subseteq [r]} (-1)^{|S|} u_{m(k-h)-h -(k-h)\Sigma S}\\
    &=
    \sum_{h = 0}^{k - 1} \left(
        \sum_{I \in \mathcal I_h} (-1)^{|I|} \binom{\lambda_1}{I_1} \cdots \binom{\lambda_{k-1}}{I_{k-1}}
    \right) |\Phi_{k-h}(\sigma, m(k-h)-h)|.
\end{align*}
This concludes the proof.
\end{proof}

\section{Decorated ordered set partitions}\label{sec: DOSP thm}

In this section, we show that $H^*(\Delta_{k,n}; S_n)[1]$ is the permutation character of $S_n$ acting naturally on the set of hypersimplicial $(k,n)$-DOSPs. From Definition~\ref{def: Phi set and I_h set}, we recall the definition of the set $\mathcal I_h$. Our main result gives a formula for the number of hypersimplicial $\sigma$-fixed $(k,n)$-DOSPs.

\begin{theorem}\label{thm: H*[1] is perm character of DOSPs under Sn}
    Let $2 \le k < n$ and denote $H^* = H^*(\Delta_{k,n}; S_n)$. Then $H^*[1]$ is equal to the permutation character of the action of $S_n$ on hypersimplicial $(k,n)$-DOSPs. Let $\sigma \in S_n$ be a permutation with $r$ disjoint cycles, and, for each $i \in [n]$, write $\lambda_i$ for the number of length $i$ cycles of $\sigma$. Then the number of $\sigma$-fixed hypersimplicial $(k,n)$-DOSPs is
    \[
    H^*[1](\sigma) = g
    \sum_{h = 0}^{k-1}
    \left( 
        \sum_{I \in \mathcal I_{h}}
        (-1)^{|I|}
        \binom{\lambda_1}{I_1} \cdots \binom{\lambda_{k-1}}{I_{k-1}}
    \right)
    (k - h)^{r - 1}
    \]
    where $g = \gcd(\{k\} \cup \{i \in [n] : \lambda_i \ge 1\})$.
\end{theorem}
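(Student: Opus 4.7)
My approach has two stages. In Stage~1, I sum the formula of Theorem~\ref{thm: H* coefficient in terms of representations} over $m\ge 0$ to obtain the closed-form expression for $H^*[1](\sigma)$; in Stage~2, I interpret this expression combinatorially as the number of $\sigma$-fixed hypersimplicial $(k,n)$-DOSPs, which identifies $H^*[1]$ with the permutation character in the statement.

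For Stage~1, swapping summation order in $H^*[1](\sigma)=\sum_{m\ge 0}H^*_m(\sigma)$ and applying Theorem~\ref{thm: H* coefficient in terms of representations} gives
\[
H^*[1](\sigma)=\sum_{h=0}^{k-1}\Bigl(\sum_{I\in\mathcal I_h}(-1)^{|I|}\prod_{i=1}^{k-1}\binom{\lambda_i}{I_i}\Bigr)\sum_{m\ge 0}\bigl|\Phi_{k-h}(\sigma,m(k-h)-h)\bigr|.
\]
The inner sum over $m$ counts functions $f\colon[r]\to\{0,\ldots,k-h-1\}$ with $\sum_i f(i)s_i\equiv -h\pmod{k-h}$, where $(s_1,\ldots,s_r)$ is the cycle type of $\sigma$. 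By orthogonality of characters of $\mathbb Z/(k-h)\mathbb Z$, this count equals $(k-h)^{r-1}\cdot\gcd(k-h,g_\sigma)$ when $\gcd(k-h,g_\sigma)\mid h$, and zero otherwise, where $g_\sigma=\gcd(s_1,\ldots,s_r)$. The key simplification is that $\prod_i\binom{\lambda_i}{I_i}$ vanishes unless every $i$ with $I_i>0$ is a cycle length of $\sigma$, in which case $g_\sigma\mid i$ and hence $g_\sigma\mid h=\sum i I_i$ on the support of the outer sum. On this support $\gcd(k-h,g_\sigma)=\gcd(k,g_\sigma)=g$, the divisibility hypothesis holds automatically (since $g\mid g_\sigma\mid h$), and the inner sum reduces to $g(k-h)^{r-1}$. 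Factoring out $g$ yields the formula in the theorem, with the Eulerian-number identity for $A(n-1,k-1)$ recovered as the specialization $\sigma=\mathrm{id}$.

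For Stage~2, I will count $\sigma$-fixed hypersimplicial $(k,n)$-DOSPs directly. A $\sigma$-fixed DOSP $D=((L_1,\ell_1),\ldots,(L_{r'},\ell_{r'}))$ is characterized by the existence of a cyclic shift $c$ with $\sigma(L_j)=L_{j+c}$ and $\ell_j=\ell_{j+c}$; compatibility with the cycle structure of $\sigma$ and with $\sum\ell_j=k$ forces the shift order $r'/\gcd(r',c)$ to divide $g$. I will then apply inclusion--exclusion on the hypersimplicial condition $|L_j|>\ell_j$: each $I\in\mathcal I_h$ will index a configuration of ``bad'' blocks violating this condition, where $\sigma$-equivariance identifies each bad block with a selection of $\sigma$-cycles and produces the multiplicity $\prod_i\binom{\lambda_i}{I_i}$ with sign $(-1)^{|I|}$. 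After the bad blocks are removed, the remaining $\sigma$-fixed structure carries label sum $k-h$ on the remaining $\sigma$-cycles; a Burnside-style count on the cyclic shift action should then produce the factor $g(k-h)^{r-1}$, matching the formula term by term.

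The main obstacle is Stage~2: making the inclusion--exclusion precise so that DOSPs are enumerated modulo cyclic rotation of pairs, handling the possibility $|L_j|<\ell_j$ (strict) for bad blocks, and verifying that the residual $\sigma$-equivariant DOSPs are counted as $g(k-h)^{r-1}$ via a Burnside argument in which $g$ captures the admissible shift orders compatible with both the $\sigma$-cycle lengths and with $\sum\ell_j=k-h$. The interaction between the cyclic redundancy in the definition of DOSPs and the $\sigma$-equivariance constraint is delicate, and the enumerative details underlying Stage~2 constitute the main technical content of the theorem.
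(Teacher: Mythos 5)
Your Stage~1 is correct and is essentially the paper's own derivation: summing Theorem~\ref{thm: H* coefficient in terms of representations} over $m$, computing $\sum_{m\ge 0}|\Phi_{k-h}(\sigma,m(k-h)-h)|$ by a kernel/coset count for the map $(\ZZ/(k-h)\ZZ)^r\to\ZZ/(k-h)\ZZ$ (Lemma~\ref{lem: Phi count with g}), and observing that on the support of the binomial products one has $g\mid h$ and $\gcd(k-h,s_1,\dots,s_r)=g$ (Proposition~\ref{prop: gh divides h iff g0 divides h}, Corollary~\ref{cor: H*[1] coeff short}). That part stands.

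The gap is Stage~2, which you announce but do not carry out, and whose plan as stated would not go through term by term. The expectation that each $I\in\mathcal I_h$ directly indexes a bad-block configuration with sign $(-1)^{|I|}$ and residual count $g(k-h)^{r-1}$ is not what the inclusion--exclusion actually produces. In the paper's argument one must first stratify by the \emph{turning number} $\tau$ of a $\sigma$-fixed DOSP (Definition~\ref{def: dist in dosp and winding no}), then run inclusion--exclusion over collections $J$ of pairwise disjoint unions of $\sigma$-cycles, each union being a single $\sigma$-orbit of bad sets. The resulting intersection count (Lemma~\ref{lem: count SOME non-hyp DOSPs}) is
\[
\frac{((k-i)/o(\tau)+h-1)!}{((k-i)/o(\tau))!}\,o(\tau)^{j-1}(k-i)^{r-j},
\]
obtained by a stars-and-bars placement argument; it depends on $o(\tau)$ and carries the sign $(-1)^{h+1}$ where $h$ is the number of orbit-unions in $J$ --- not $(-1)^{|I|}$ where $|I|=j$ is the number of cycles involved. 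The number of such $J$ with prescribed cycle data is $\bigl(\sum_{I\in\mathcal I_i,\,|I|=j}\binom{\lambda_1}{I_1}\cdots\binom{\lambda_{k-1}}{I_{k-1}}\bigr)\stirling{j}{h}$, and the Stirling factor $\stirling{j}{h}$ together with the $\tau$-dependence only disappears after resumming over $h$ and $\tau$ via the falling-factorial identity $\sum_h(-1)^{h+1}\stirling{j}{h}(y+1)\cdots(y+h-1)=(-1)^{j+1}y^{j-1}$ (Lemma~\ref{lem: stirling identity}, from Proposition~\ref{prop: stirling2}), which converts the sign to $(-1)^{j+1}=(-1)^{|I|+1}$ and the sum over the $g$ admissible turning numbers into the overall factor $g$. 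These ingredients --- the turning number, the organisation of bad sets into $\sigma$-orbits, the stars-and-bars count, and the Stirling resummation --- are the substance of the theorem and are absent from the proposal, so as written the argument is incomplete.
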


Let us outline the results of the next two sections, which we use to prove Theorem~\ref{thm: H*[1] is perm character of DOSPs under Sn}. In Section~\ref{sec: all DOSPs}, we use Theorem~\ref{thm: H* coefficient in terms of representations} to show that the above formula for $H^*[1]$ holds (Corollary~\ref{cor: H*[1] coeff short}). We show that the number of $\sigma$-fixed $(k,n)$-DOSPs, including non-hypersimplicial DOSPs, is equal to $gk^{r-1}$ (Corollary~\ref{cor: size of Phi}) and observe that this is equal to the $h=0$ term in the above sum. In Section~\ref{sec: non-hyp dosps}, we give an explicit formula for the number of $\sigma$-fixed non-hypersimplicial DOSPs in Lemma~\ref{lem: count non-hyp DOSPs}. The proof of Theorem~\ref{thm: H*[1] is perm character of DOSPs under Sn} then follows by simplifying the formula using a modified version of the falling factorial identity for Stirling numbers. In the remainder of this section, we give an alternative but equivalent definition for DOSPs under the action of $S_n$ and define a notion of \emph{directed distance} within a DOSP.

\medskip

\noindent \textbf{Alternative DOSP definition.}
Fix $k < n$. Let $\Psi = \set{f\colon [n]\to\ZZ/k\ZZ}$ be the set of functions modulo the equivalence relation $f\sim g$ if and only if $f-g$ is constant.
There is an action of $S_n$ on $\Psi$ given by $(\sigma\cdot f)(i) = f(\sigma^{-1}(i))$ for each $\sigma\in S_n$ and $f\in\Psi$.
We now describe the natural $S_n$-set isomorphism between $\Psi$ and $(k,n)$-DOSPs.
Given a DOSP $D=((L_1,\ell_1),(L_2,\ell_2),\ldots,(L_t,\ell_t))$, its corresponding function is $f_D$ such that $f_D(i)=0$ if $i \in L_1$ and $f_D(i)=\ell_1+\ell_2+\cdots+\ell_{j-1}$ if $i\in L_j$ with $j \ge 2$.
It is straightforward to check that the map $D\mapsto f_D$ is an isomorphism of $S_n$-sets.

\begin{definition}\label{def: dist in dosp and winding no}
    Let $D = ((L_1, \ell_1), \dots, (L_r, \ell_r))$ be a $(k,n)$-DOSP. Let $i, j \in [n]$. We define the \emph{directed distance} $d_D(i,j)$ from $i$ to $j$ in $D$ as follows. Without loss of generality, we may assume $i \in L_1$. Suppose that $j \in L_u$ for some $u \in [r]$. Then $d_D(i,j) := \ell_1 + \ell_2 + \dots + \ell_{u-1} \in \{0,1, \dots, k-1\}$. The \emph{winding number} of $D$ is $w(D) =  (d_D(1,2) + d_D(2,3) + \dots + d_D(n-1, n) + d_D(n,1)) / k$. If the DOSP is given as a function $f: [n] \rightarrow \ZZ/k\ZZ$, then, for each $i,j \in [n]$, the directed distance is $d_f(i,j) = f(j) - f(i)$ where we take the representative of $f(j) - f(i)$ in $\{0,1, \dots, k-1\}$.

    Fix a permutation $\sigma\in S_n$.
    Given a $\sigma$-fixed DOSP $f \colon [n] \rightarrow \ZZ/k\ZZ$, we define the \emph{turning number} $\tau\in\ZZ/k\ZZ$ of $f$ with respect to $\sigma$ such that $\tau + f(i) = (\sigma\cdot f)(i)$ for any $i\in [n]$.
    This notion is well-defined, i.e., $\tau$ does not depend on $i$, since $\sigma$ fixes $f$.
\end{definition}

\begin{example}\label{example: turning number}
    Let $k = 6$, $n = 10$, and $\sigma = (1 \ 2 \ 3 \ 4 \ 5 \ 6)(7 \ 8 \ 9 \ 10) \in S_{10}$. Let us compute the turning number of the DOSP $D = ((1\,3\,5, 1), (7\,9, 2), (2\,4\,6, 1), (8\,10, 2))$ under $\sigma$, which is depicted in Figure~\ref{fig: turning number}. If we apply $\sigma$ to $D$ then we obtain 
    $\sigma(D) = ((2\,4\,6, 1), (8\,10, 2), (1\,3\,5, 1), (7\,9, 2))$, which is equivalent to $D$. The equivalence of $\sigma(D)$ and $D$ can be observed in the figure: the right DOSP is obtained by turning the left DOSP clockwise by $3$ spaces. In general, the turning number is the number of spaces we turn $D$ to obtain $\sigma(D)$. So, the turning number of $D$ with respect to $\sigma$ is $3$.

    \begin{figure}[t]
    \centering
    \begin{tikzpicture}[scale = 0.9]
    
    \draw[thick] (6,12.25) circle (2.75);
    
    \vertex [label={$\set{1,3,5}$}] at ({6+2.75*cos(90)},{12.25+2.75*sin(90)}) {};
    \vertex [label={[anchor=south west,] $\set{7,9}$}] at ({6+2.75*cos(30)},{12.25+2.75*sin(30)}) {};
    \smallvertex [] at ({6+2.75*cos(-30)},{12.25+2.75*sin(-30)}) {};
    \vertex [label={[yshift=-27]$\set{2,4,6}$}] at ({6+2.75*cos(-90)},{12.25+2.75*sin(-90)}) {};
    \vertex [label={[xshift=-15, yshift=-25]$\set{8,10}$}]
    at ({6+2.75*cos(-150)},{12.25+2.75*sin(-150)}) {};
    \smallvertex []
    at ({6+2.75*cos(-210)},{12.25+2.75*sin(-210)}) {};
    
    \draw[-{Latex[length=4mm, width=2.5mm]}] ({6+2.2*cos(90)},{12.25+2.2*sin(90)}) arc[start angle=90, end angle=-210, radius=2.2];

    \draw[thick] (16,12.25) circle (2.75);
    
    \vertex [label={$\set{2,4,6}$}] at ({16+2.75*cos(90)},{12.25+2.75*sin(90)}) {};
    \vertex [label={[anchor=south west,] $\set{8,10}$}] at ({16+2.75*cos(30)},{12.25+2.75*sin(30)}) {};
    \smallvertex [] at ({16+2.75*cos(-30)},{12.25+2.75*sin(-30)}) {};
    \vertex [label={[yshift=-27]$\set{1,3,5}$}] at ({16+2.75*cos(-90)},{12.25+2.75*sin(-90)}) {};
    \vertex [label={[xshift=-15, yshift=-25]$\set{7,9}$}]
    at ({16+2.75*cos(-150)},{12.25+2.75*sin(-150)}) {};
    \smallvertex []
    at ({16+2.75*cos(-210)},{12.25+2.75*sin(-210)}) {};
    
    \draw[-{Latex[length=4mm, width=2.5mm]}] ({16+2.2*cos(90)},{12.25+2.2*sin(90)}) arc[start angle=90, end angle=-210, radius=2.2];
    
    \draw [-{Latex[length=4mm, width=2.5mm]}] (10,12.25) -- (12,12.25);
    \node [font=\Large] at (11,12.75) {$\sigma$};
    \end{tikzpicture}
    
    \caption{The DOSPs in Example~\ref{example: turning number}. The DOSP $\sigma(D)$ (right) is obtained from $D$ (left) by turning it $3$ spaces clockwise, hence the turning number of $D$ with respect to $\sigma$ is $3$.}
    \label{fig: turning number}
\end{figure}
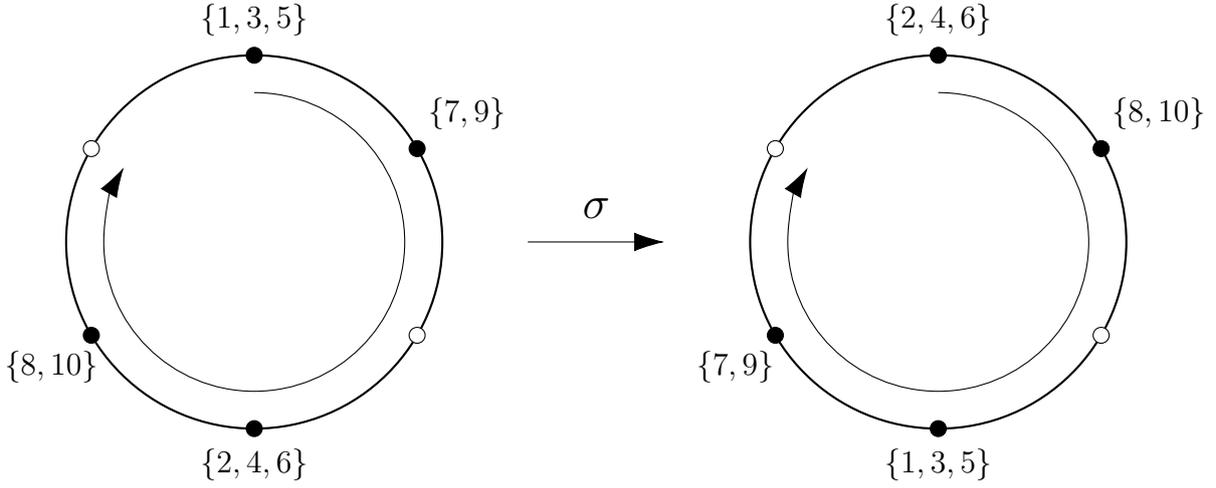
    
\end{example}

\begin{lemma}\label{lem: turning number and g}
    Fix $2\leq k < n$ and let $\sigma\in S_n$ be a permutation with cycle type $(s_1,s_2,\dots,s_r)$.
    Let $g=\gcd(s_1,\ldots,s_r,k)$.
    If $D$ is a $\sigma$-fixed $(k,n)$-DOSP with turning number $\tau$, then $g \tau=0$.
\end{lemma}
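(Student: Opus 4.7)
The plan is to work in the alternative DOSP formalism, where $D$ corresponds to a function $f \colon [n] \to \ZZ/k\ZZ$. By Definition~\ref{def: dist in dosp and winding no}, the turning number $\tau \in \ZZ/k\ZZ$ is characterized by
\[
f(\sigma^{-1}(i)) = (\sigma\cdot f)(i) = f(i) + \tau \quad \text{for every } i \in [n].
\]
The key observation is that iterating this identity along a cycle of $\sigma$ must return the same value of $f$.

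More precisely, fix $j \in [r]$ and pick any $i$ in a cycle of $\sigma$ of length $s_j$. Applying the displayed identity $s_j$ times gives
\[
f(i) = f(\sigma^{-s_j}(i)) = f(i) + s_j \tau,
\]
so $s_j \tau = 0$ in $\ZZ/k\ZZ$. This holds for every $j \in [r]$, and of course $k\tau = 0$ trivially.

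To conclude, I invoke B\'ezout's identity: since $g = \gcd(s_1, \dots, s_r, k)$, there exist integers $a_1, \dots, a_r, b \in \ZZ$ such that $g = a_1 s_1 + \cdots + a_r s_r + b k$. Then in $\ZZ/k\ZZ$ we obtain
\[
g \tau = a_1 (s_1 \tau) + \cdots + a_r (s_r \tau) + b(k\tau) = 0,
\]
which is the desired identity. There is no real obstacle here; the only subtlety is being careful about the direction in which $\sigma$ acts on the index, but the defining property $(\sigma\cdot f)(i) = f(\sigma^{-1}(i))$ makes the iteration along a cycle immediate.
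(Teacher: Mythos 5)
Your proof is correct and follows essentially the same route as the paper: iterate the turning-number identity along each cycle to get $s_i\tau=0$, note $k\tau=0$, and conclude $g\tau=0$ from the gcd. The paper's version is just terser, leaving the B\'ezout step implicit.
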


\begin{proof}
    Suppose that $f \colon [n] \rightarrow \ZZ/k\ZZ$ is a $\sigma$-fixed DOSP with non-zero turning number $\tau$.
    Notice that $(\sigma^{s_i} \cdot f)(q_i)=f(q_i)$,
    so we have $s_i \tau = 0$ for every $1\leq i\leq r$.
    Since $\tau \in \ZZ/k\ZZ$ we have $k \tau=0$, and it follows that $g \tau = 0$.
\end{proof}

\subsection{Interpreting terms with DOSPs}\label{sec: all DOSPs}

Throughout, we fix $k < n$ and write $H^*[t] = H^*(\Delta_{k,n}; S_n)[t]$ for the equivariant $H^*$-polynomial.
By Corollary~\ref{cor: degree of H* poly}, let $d = \lfloor(k - 1)n/k \rfloor$ be the degree of $H^*$. By Theorem~\ref{thm: H* coefficient in terms of representations}, we have that
\[
H^*[1](\sigma) 
    = \sum_{m = 0}^{d}\sum_{h = 0}^{k - 1} \left(
        \sum_{I \in \mathcal I_h} (-1)^{|I|} \binom{\lambda_1}{I_1} \cdots \binom{\lambda_{k-1}}{I_{k-1}}
    \right) |\Phi_{k-h}(\sigma, m(k-h) - h)|. \\
\]
The $h = 0$ term in the above sum is
$
\sum_{m = 0}^{d} |\Phi_\ell(\sigma, m \ell)|,
$
which we will show corresponds to the number of $\sigma$-fixed $(k,n)$-DOSPs. In subsequent sections, we show that the remaining terms count the non-hypersimplicial $(k,n)$-DOSPs. Hence, the overall value $H^*[1]$ is the number of $\sigma$-fixed hypersimplicial $(k,n)$ DOSPs.

For ease of notation, let us write functions as tuples. The function $f \colon [n] \rightarrow \{0,\dots, k-1\}$ is written as $(f(1), f(2), \dots, f(n)) \in \{0, \dots, k-1\}^n$.

\begin{example}
    Consider the case $n = 6$, $k = 3$, and take the permutation $\sigma = (1\ 2\ 3\ 4)(5\ 6)$. In this case we have the functions
    \[
    \bigcup_{m = 0}^4 \Phi_3(\sigma, 3m) = \{(0,0,0,0,0,0), (1,1,1,1,1,1), (2,2,2,2,2,2) \}.
    \]
    Indeed there are three $\sigma$-fixed DOSPs, which are given by
    \[
    D_1 = ((123456,3)), \quad
    D_2 = ((1234,2), (56,1)), \quad
    D_3 = ((1234,1), (56,2)).
    \]
\end{example}

\begin{lemma}\label{lem: count sigma fixed dosps}
    Fix $0 < k < n$. Let $\sigma \in S_n$ be a permutation with cycle type $(s_1, \dots, s_r)$ and define $g = \gcd(k, s_1, s_2, \dots, s_r)$. Then the number of $\sigma$-fixed DOSPs is $g k^{r-1}$. In particular, there is a bijection between the set of $\sigma$-fixed DOSPs and the set
    \[
        \{(\alpha_1, \alpha_2, \dots, \alpha_r) : 0 \le \alpha_1 \le g-1,\, 0 \le \alpha_i \le k - 1,\, 2 \le i \le r\}.
    \]
\end{lemma}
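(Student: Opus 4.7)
The plan is to work with the alternative description of DOSPs as equivalence classes of functions $f\colon [n]\to\ZZ/k\ZZ$ modulo additive constants, as introduced just before Definition~\ref{def: dist in dosp and winding no}. A DOSP $f$ is $\sigma$-fixed precisely when $\sigma\cdot f - f$ is a constant function on $[n]$; that constant is the turning number $\tau\in\ZZ/k\ZZ$ from Definition~\ref{def: dist in dosp and winding no}. Unpacking, $\sigma$-invariance is equivalent to the recurrence
\[
f(\sigma^{-1}(i)) = f(i) + \tau \qquad \text{for every } i\in[n].
\]

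Next I would choose one representative $q_i$ from each cycle $C_i$ of $\sigma$. Iterating the recurrence along $C_i$ shows that the values of $f$ on $C_i$ are completely determined by $f(q_i)$ and $\tau$, via $f(\sigma^{-a}(q_i)) = f(q_i) + a\tau$. Running once around the cycle forces the consistency condition $s_i \tau \equiv 0 \pmod k$ for each $i$, and, by a Bezout argument (this is precisely Lemma~\ref{lem: turning number and g}), this is equivalent to the single condition $g\tau \equiv 0 \pmod k$. The set of solutions in $\ZZ/k\ZZ$ is the cyclic subgroup generated by $k/g$, which has cardinality exactly $g$.

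The counting then becomes routine: for each admissible $\tau$ (giving $g$ choices) and each independent assignment $(f(q_1),\dots,f(q_r))\in(\ZZ/k\ZZ)^r$, we obtain a $\sigma$-invariant function, yielding $g\cdot k^r$ such functions. Passing from functions to DOSPs means quotienting by the action of additive constants $f\mapsto f+c$ for $c\in\ZZ/k\ZZ$; this equivalence has orbits of size $k$, so the number of $\sigma$-fixed DOSPs is $gk^{r-1}$. For the explicit bijection, normalise each equivalence class by choosing the unique representative with $f(q_1)=0$; then encode $\tau = \alpha_1 \cdot (k/g)$ with $\alpha_1\in\{0,1,\dots,g-1\}$ and $f(q_i)=\alpha_i$ with $\alpha_i\in\{0,1,\dots,k-1\}$ for $2\le i\le r$. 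This assignment is clearly a bijection onto the claimed indexing set.

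The one subtle point is the passage $s_i\tau\equiv 0\pmod k$ for all $i$ $\Longleftrightarrow$ $g\tau\equiv 0\pmod k$; the nontrivial direction uses Bezout to write $g = ak+\sum b_i s_i$ and apply it to $\tau$, which is exactly the content of Lemma~\ref{lem: turning number and g} already in hand. Everything else is bookkeeping, so no significant obstacle remains.
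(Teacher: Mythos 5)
Your proposal is correct and follows essentially the same route as the paper: both work with the functional model $f\colon[n]\to\ZZ/k\ZZ$ modulo constants, reduce everything to the turning number $\tau$ (constrained to the $g$ multiples of $k/g$ via Lemma~\ref{lem: turning number and g}) together with one free value per remaining cycle, and the only difference is bookkeeping --- you count all $gk^r$ representative functions and divide by the $k$ additive shifts, while the paper normalises $f(1)=0$ from the start.
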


\begin{proof}
    Throughout, we consider DOSPs as functions $f \colon [n] \rightarrow \ZZ/k\ZZ$ up to equivalence. If $\sigma$ is the identity permutation, then every DOSP is fixed by $\sigma$. The number of $\sigma$-fixed DOSPs is $k^{n-1}$, because we may take $f(1) = 0$ and freely choose the values $f(i) \in \ZZ/k\ZZ$ for each $i \in \{2,3, \dots, n\}$. Note that each such choice gives a distinct DOSP.

    Now suppose that $\sigma$ is not the identity. Let $C_1, C_2, \dots, C_r$ be the cycle sets of $\sigma$. Without loss of generality, we may assume that $s_1 > 1$ and $1 \in C_1$. We define $q_1 = \sigma(1)$ and, for each $i \in \{2, \dots, r\}$, let us fix a distinguished element $q_i \in C_i$.
    Let $f \colon [n] \rightarrow \ZZ/k\ZZ$ be a $\sigma$-fixed $(k,n)$-DOSP. Without loss of generality we assume that $f(1) = 0$. We will show that the sequence of integers
    \[
        \alpha = (\alpha_1, \alpha_2, \dots, \alpha_k)= (d_f(1,q_1),\, d_f(1,q_2),\, \dots,\, d_f(1,q_r))
    \]
    uniquely determines the DOSP.
    By assumption $f(1) = 0$. By definition, we have $f(q_1) = d_f(1,q_1)$. Since $D$ is invariant under $\sigma$, it follows that 
    \[
        d_D(1,\sigma(1)) = d_D(\sigma(1),\sigma^2(1)) = \dots = d_D(\sigma^{s_1-1}(1), \sigma^{s_1}(1)) = d_D(1,q_1).
    \]
    So the value $f(\sigma^i(1))$ for each element of $C_1 = \{1, \sigma(1), \sigma^2(1), \dots, \sigma^{s_1 - 1}(1) \}$ is determined by $d_f(1,q_1)$. Explicitly, we have $f(\sigma^i(1)) = i  \cdot d_f(1,q_1) \mod{k}$ for all $i \ge 0$.
    By a similar argument, the value $f(\sigma^i(q_2))$ for each element of $C_2 = \{q_2, \sigma(q_2), \sigma^2(q_2), \dots, \sigma^{s_2 - 1}(q_2)\}$ is determined by $d_f(1, q_2)$. To see this, observe that $f(q_2) = d_f(1, q_2)$ and, since $f$ is invariant under $\sigma$, it follows that $d_f(q_2, \sigma(q_2)) = d_f(1,\sigma(1))$. We deduce that the DOSP $f$ is uniquely determined by $\alpha$.

    We now consider the possible vectors $\alpha$. By definition, we have that $d_f(1,q_1) = d_f(1, \sigma(1))$ is the turning number of $f$. By Lemma~\ref{lem: turning number and g} we have that $g \cdot d_f(1, q_1) \equiv 0 \mod k$. The possible values for $d_f(1,q_1) \cdot g$ are 
    $
    \beta \cdot k
    $ for each $\beta \in \{0,1, \dots, g - 1\}$. Hence, the possible values for $d_f(1,q_1)$ are $\beta k / g$ for each $0 \le \beta < g$.
    For each such value of $\alpha_1 = d_f(1,q_1)$, we may freely choose the values $\alpha_2, \dots, \alpha_k$ in $\{0,1,\dots, k-1\}$. Each choice gives a distinct DOSP and every $\sigma$-fixed $(k,n)$-DOSP arises in this way. So the number of DOSPs is $gk^{r-1}$.
\end{proof}

\begin{proposition}\label{prop: count size of Phi}
    Let $\sigma \in S_n$ be a permutation with cycle type $(s_1, s_2, \dots, s_r)$, and fix $k \in [n]$. Define 
    \[
        \Phi = \left\{(f_1, f_2, \dots, f_r) \in \{0,1, \dots, k-1\}^r : \sum_{i = 1}^r f_i s_i \equiv 0 \mod k \right\},
    \]
    and let $g := \gcd(s_1, s_2, \dots, s_r, k)$.
    Then $|\Phi| = g k^{r-1}$.
\end{proposition}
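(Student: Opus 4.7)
My plan is to prove this via a short group-theoretic argument, observing that $\Phi$ is the kernel of a natural homomorphism between finite abelian groups.

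First I would define the group homomorphism
\[
\phi \colon (\ZZ/k\ZZ)^r \longrightarrow \ZZ/k\ZZ,
\qquad
(f_1,\dots,f_r) \longmapsto \sum_{i=1}^r f_i s_i \pmod k.
\]
Under the natural identification of $\{0,1,\dots,k-1\}^r$ with $(\ZZ/k\ZZ)^r$, the set $\Phi$ is precisely $\ker\phi$, so it suffices to compute the size of this kernel.

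Next I would identify the image of $\phi$. Since $\phi$ is $\ZZ$-linear in its inputs, $\im\phi$ is the subgroup of $\ZZ/k\ZZ$ generated by the residues $\bar s_1,\dots,\bar s_r$. A standard fact from elementary number theory says that this subgroup is cyclic, generated by $\gcd(s_1,\dots,s_r,k) = g$ modulo $k$, and hence has order $k/g$. The key identity here is just that the subgroup of $\ZZ$ generated by $s_1,\dots,s_r$ equals $\gcd(s_1,\dots,s_r)\ZZ$, so after reducing modulo $k$ we get the subgroup $g\ZZ/k\ZZ$ of order $k/g$.

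Finally, applying the first isomorphism theorem (or equivalently the orbit-stabiliser / counting principle for group homomorphisms of finite abelian groups) gives
\[
|\Phi| \;=\; |\ker\phi| \;=\; \frac{|(\ZZ/k\ZZ)^r|}{|\im\phi|} \;=\; \frac{k^r}{k/g} \;=\; g\, k^{r-1}.
\]
There is no real obstacle here: the entire argument is a single application of the first isomorphism theorem once one sees that $\Phi$ is the kernel of $\phi$. The only small point that needs a sentence of justification is the computation of $|\im\phi| = k/g$, which follows from the characterization of cyclic subgroups of $\ZZ/k\ZZ$. An alternative route by induction on $r$ is possible but messier, since the gcd changes at each step; the homomorphism argument is cleaner and is what I would use.
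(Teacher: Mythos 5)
Your proposal is correct and is essentially identical to the paper's proof: the authors define the same homomorphism $\varphi\colon(\ZZ/k\ZZ)^r\to\ZZ/k\ZZ$, identify its image as the subgroup generated by $g$ via Bezout's identity, and conclude $|\Phi|=|\ker\varphi|=k^r/(k/g)=gk^{r-1}$ by the first isomorphism theorem.
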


\begin{proof}
    Consider the homomorphism of abelian groups
    \[
        \varphi \colon (\ZZ/k\ZZ)^r
        \rightarrow \ZZ/k\ZZ, \quad (f_1, f_2, \dots, f_r) \mapsto \sum_{i = 1}^r f_i s_i.
    \]
    The image of $\varphi$ is the subgroup of $\ZZ/k\ZZ$ generated by $s_1, s_2, \dots, s_r$. By Bezout's identity
    \[
        \langle s_1, s_2, \dots, s_k \rangle = \langle \gcd(s_1, s_2, \dots, s_r, k) \rangle = \langle g \rangle \subseteq \ZZ/k\ZZ.
    \]
    So we have $|\im(\varphi)| = k / g$. Therefore
    $
        |\Phi| = |\ker(\varphi)| = \frac{k^r}{k / g} = g k^{r-1}
    $ and we are done.
\end{proof}

The two results above, give us the following.

\begin{corollary}\label{cor: size of Phi}
Fix $0 < k < n$. Let $\sigma \in S_n$ be a permutation with cycle type $s_1, \dots, s_r$ and define $g = \gcd(k, s_1, s_2, \dots, s_r)$. Recall the set $\Phi$ from the statement of Proposition~\ref{prop: count size of Phi}.
There is a bijection between the set of all $\sigma$-fixed DOSPs and the set
\[
    \Phi = \bigcup_{m = 0}^{\lfloor (k-1)n/k \rfloor } \Phi_k(\sigma,  mk). \quad 
    \text{ Therefore}
    \quad
    \sum_{m \ge 0} |\Phi_k(\sigma, mk)| = g k^{r-1}.
\]
\end{corollary}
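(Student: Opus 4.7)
The plan is to observe that Corollary~\ref{cor: size of Phi} is essentially a bookkeeping statement that ties together Lemma~\ref{lem: count sigma fixed dosps} and Proposition~\ref{prop: count size of Phi}. The two previous results independently compute the same integer $gk^{r-1}$, once as the number of $\sigma$-fixed $(k,n)$-DOSPs and once as the size of a set of sum-constrained tuples; the work of the corollary is to explain why that sum-constrained set is literally $\bigcup_m \Phi_k(\sigma,mk)$.

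First, I would unpack the union on the right. By the definition of $\Phi_k(\sigma, \cdot)$ in Definition~\ref{def: Phi set and I_h set}, an element of $\bigcup_{m\ge 0}\Phi_k(\sigma, mk)$ is a function $f\colon [r]\to\{0,1,\dots,k-1\}$ such that $\sum_{i=1}^r f(i)s_i = mk$ for some $m\ge 0$. Rewriting this condition modulo $k$, the union is exactly the set $\Phi$ appearing in Proposition~\ref{prop: count size of Phi} (once one identifies functions on $[r]$ with tuples $(f_1,\dots,f_r)$). For the truncation in the corollary, I would use that $\sum_i f(i)s_i \le (k-1)(s_1+\cdots+s_r)=(k-1)n$, so $mk\le (k-1)n$, forcing $m\le \lfloor(k-1)n/k\rfloor$; hence the union can be taken over this finite range without losing any elements.

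With this identification made, the cardinality of $\bigcup_{m=0}^{\lfloor(k-1)n/k\rfloor}\Phi_k(\sigma,mk)$ is $gk^{r-1}$ by Proposition~\ref{prop: count size of Phi}. On the other hand, Lemma~\ref{lem: count sigma fixed dosps} shows that the set of $\sigma$-fixed $(k,n)$-DOSPs also has size $gk^{r-1}$, so the required bijection exists (one can take any bijection between two finite sets of equal cardinality). The last equality $\sum_{m\ge 0}|\Phi_k(\sigma,mk)|=gk^{r-1}$ is then just a restatement of $|\Phi|=gk^{r-1}$, since the sets $\Phi_k(\sigma,mk)$ are disjoint for distinct $m$.

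There is no substantial obstacle here; the only mild subtlety is making sure the sum-constraint $\sum f(i)s_i=mk$ is correctly reinterpreted as the congruence $\sum f(i)s_i\equiv 0\pmod k$ and that the range of $m$ is wide enough to capture every such tuple. If a more explicit bijection were desired, one could start from the parameterization in the proof of Lemma~\ref{lem: count sigma fixed dosps} by tuples $(\alpha_1,\dots,\alpha_r)$ with $\alpha_1\in\{0,k/g,\dots,(g-1)k/g\}$ and $\alpha_j\in\{0,\dots,k-1\}$ for $j\ge 2$, and match them to the elements of $\Phi$ via a change of coordinates in $(\ZZ/k\ZZ)^r$ respecting the kernel of the map $(f_1,\dots,f_r)\mapsto\sum f_is_i$, but this refinement is not needed for the corollary as stated.
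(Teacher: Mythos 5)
Your proposal is correct and follows essentially the same route as the paper: the paper's proof simply cites Lemma~\ref{lem: count sigma fixed dosps} for the count $gk^{r-1}$ of $\sigma$-fixed DOSPs and Proposition~\ref{prop: count size of Phi} for $|\Phi| = gk^{r-1}$, and concludes. The extra bookkeeping you supply --- identifying $\bigcup_m \Phi_k(\sigma, mk)$ with $\Phi$ via the congruence $\sum_i f(i)s_i \equiv 0 \pmod k$, bounding $m$ by $\lfloor (k-1)n/k\rfloor$, and noting disjointness of the $\Phi_k(\sigma, mk)$ --- is left implicit in the paper but is a welcome clarification.
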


\begin{proof}
    Suppose that $\sigma$ has cycle type $(s_1, s_2, \dots, s_r)$.
    By Lemma~\ref{lem: count sigma fixed dosps}, the number of $\sigma$-fixed $(k,n)$-DOSPs is $g k^{r-1}$. By Proposition~\ref{prop: count size of Phi}, we have that $|\Phi| = g k^{r-1}$, and we are done.
\end{proof}

\begin{lemma}\label{lem: Phi count with g}
    Let $\sigma \in S_n$ be a permutation with cycle type $(s_1, s_2, \dots, s_r)$. Fix $h \in \{0,1,\dots, k-1\}$. Define $g' = \gcd(s_1, s_2, \dots, s_r, k-h)$. Then we have
    \[
        \sum_{m \ge 0} |\Phi_{k-h}(\sigma, m(k-h)-h)| = \begin{cases}
            g' (k-h)^{r-1} & \text{if } g' \text{ divides } h, \\
            0 & \text{otherwise.}
        \end{cases}
    \]
\end{lemma}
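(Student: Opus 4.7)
The plan is to unify all terms $\Phi_{k-h}(\sigma, m(k-h)-h)$ across $m \geq 0$ into a single counting problem modulo $\ell := k-h$, and then apply the homomorphism argument from Proposition~\ref{prop: count size of Phi}.

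First I would observe that a tuple $(f_1,\dots,f_r) \in \{0,1,\dots,\ell-1\}^r$ lies in $\Phi_{k-h}(\sigma, m(k-h)-h)$ for some $m \geq 0$ if and only if $\sum_{i=1}^r f_i s_i = m\ell - h$ for some $m \geq 0$. Since $\sum_i f_i s_i \geq 0$ and hence $\sum_i f_i s_i + h \geq 0$, the value $m = (\sum_i f_i s_i + h)/\ell$ is automatically non-negative whenever it is an integer. Moreover each such tuple contributes to exactly one value of $m$. Therefore
\[
\sum_{m \ge 0} |\Phi_{k-h}(\sigma, m(k-h)-h)| = \left|\left\{(f_1,\dots,f_r)\in \{0,1,\dots,\ell-1\}^r : \sum_{i=1}^r f_i s_i \equiv -h \pmod{\ell}\right\}\right|.
\]

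Next I would invoke the homomorphism argument. Consider
\[
\varphi : (\ZZ/\ell\ZZ)^r \longrightarrow \ZZ/\ell\ZZ, \qquad (f_1,\dots,f_r) \mapsto \sum_{i=1}^r f_i s_i.
\]
The right-hand side above is exactly $|\varphi^{-1}(-h)|$. The image of $\varphi$ is the subgroup $\langle \bar{s_1}, \dots, \bar{s_r}\rangle \subseteq \ZZ/\ell\ZZ$, which by Bezout's identity equals $\langle \gcd(s_1,\dots,s_r,\ell)\rangle = \langle g' \rangle$. Thus $-h \in \im(\varphi)$ if and only if $g' \mid h$; when this fails, the preimage is empty and the sum is $0$.

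When $g' \mid h$, the preimage $\varphi^{-1}(-h)$ is a coset of $\ker(\varphi)$ and so has the same cardinality as $\ker(\varphi)$. By the first isomorphism theorem, $|\ker(\varphi)| = \ell^r / |\im(\varphi)| = \ell^r / (\ell/g') = g'\,\ell^{r-1} = g'(k-h)^{r-1}$, as required. This second step is essentially identical to the proof of Proposition~\ref{prop: count size of Phi} and involves no real obstacle; the only part that requires genuine care is the initial bookkeeping that folds the infinite sum over $m$ into a single congruence condition, making sure no tuple is counted twice and that the non-negativity convention $\Phi_{k-h}(\sigma, N) = \emptyset$ for $N < 0$ does not exclude any legitimate tuple.
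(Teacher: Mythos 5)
Your proof is correct and follows essentially the same route as the paper: both reduce the sum over $m$ to counting the fibre $\varphi^{-1}(-h)$ of the homomorphism $\varphi\colon(\ZZ/(k-h)\ZZ)^r\to\ZZ/(k-h)\ZZ$ and then apply Bezout together with the first isomorphism theorem. The only difference is that you spell out the bijection with the congruence class and the non-negativity bookkeeping in more detail than the paper, which simply asserts it as a ``natural bijection.''
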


\begin{proof}
    Consider the homomorphism of abelian groups 
    \[
        \varphi \colon (\ZZ/(k-h)\ZZ)^r \rightarrow \ZZ/(k-h)\ZZ, \quad (f_1, f_2, \dots, f_r) \mapsto \sum_{i = 1}^r f_i s_i.
    \]
    Observe that there is a natural bijection between $\bigcup_{m \ge 0} \Phi_{k-h}(\sigma, m(k-h)-h)$ and the set $\Phi' := \{f \in (\ZZ/(k-h)\ZZ)^r : \varphi(f) = -h \mod (k-h)\}$. By Bezout's identity, it follows that the image of $\varphi$ is principally generated by $g'$.
    It follows that
    \[
    |\Phi'| = \begin{cases}
        |\ker(\varphi)| = g'(k-h)^{r-1} & \text{if } -h \in \langle g' \rangle \subseteq \ZZ/(k-h)\ZZ, \\
        0 & \text{otherwise.}
    \end{cases}
    \]
    In the first case, we have that $-h \in \langle g' \rangle $ if and only if $g'$ divides $h$.
\end{proof}

\begin{proposition}\label{prop: gh divides h iff g0 divides h}
    Fix $0 \le h < k$ and some positive integers  $s_1, \dots, s_r$. For each $0 \le i < k$ define $g_i = \gcd(k-i, s_1, \dots, s_r)$. Then $g_h | h$ if and only if $g_0 | h$.
\end{proposition}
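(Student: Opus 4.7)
The plan is to reduce the claim to a purely arithmetic statement in $k$, $h$, and $d := \gcd(s_1,\dots,s_r)$, since $g_0 = \gcd(k,d)$ and $g_h = \gcd(k-h, d)$.

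First I would rewrite each of the two divisibilities as a divisibility between $g_0$ and $g_h$. Because $g_h \mid k-h$, one has $g_h \mid h \iff g_h \mid k$; and since $g_h \mid d$, this is equivalent to $g_h \mid \gcd(k,d) = g_0$. Symmetrically, $g_0 \mid k$ gives $g_0 \mid h \iff g_0 \mid k-h \iff g_0 \mid g_h$. So the statement reduces to $g_h \mid g_0 \iff g_0 \mid g_h$, which amounts to the equality $g_h = g_0$.

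To prove $g_h = g_0$, I would argue prime-by-prime. For each prime $p$, let $v_p$ denote the $p$-adic valuation, and set $a = v_p(k)$, $b = v_p(d)$, $c = v_p(h)$, so $v_p(g_0) = \min(a,b)$ and $v_p(g_h) = \min(v_p(k-h),b)$. Using the ultrametric law $v_p(k-h) \ge \min(a,c)$, with equality when $a \ne c$, one can run through the three cases $a < c$, $a = c$, $a > c$ to compare the two minima against $c$ and verify $v_p(g_0) \le c \iff v_p(g_h) \le c$ in each.

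The main obstacle is the case $a > c$: here $v_p(k-h) = c$, so $v_p(g_h) \le c$ automatically, but $v_p(g_0) = \min(a,b)$ can exceed $c$, which threatens the biconditional. I expect the resolution to rely on a supplementary observation—most naturally, that whenever the coefficient in front of $(k-h)^{r-1}$ in the ambient sum is non-zero we must have $d \mid h$, so that such a prime $p$ cannot occur.
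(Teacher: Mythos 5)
Your reduction in the first paragraph is correct, and your valuation analysis in fact uncovers a genuine problem: Proposition~\ref{prop: gh divides h iff g0 divides h} is \emph{false} as stated, so no unconditional proof (yours or the paper's) can be completed. Take $k=6$, $h=2$, $r=1$, $s_1=4$: then $g_0=\gcd(6,4)=2$ divides $h$, but $g_h=\gcd(4,4)=4$ does not. For a failure in the other direction take $s_1=6$: then $g_h=\gcd(4,6)=2$ divides $2$ while $g_0=6$ does not. The paper's own proof breaks at the assertion that $g_h\mid h$ if and only if $(k-h)\mid h$ and $\tilde g\mid h$: a gcd may divide $h$ without either of its arguments doing so, so only the ``if'' direction of that equivalence holds. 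Two smaller corrections to your write-up: the biconditional $g_h\mid g_0\iff g_0\mid g_h$ does not ``amount to'' the equality $g_h=g_0$ (it also holds vacuously when neither divides the other), and the problematic configuration in your prime-by-prime analysis is not only $a>c$ but also $a=c$ together with $b>c$ and $v_p(k-h)>c$ --- this is exactly what happens at $p=2$ in the first counterexample above.

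Your proposed repair is the right one. In the only place the proposition is invoked (Proposition~\ref{prop: H^*[1] coeff}, and through it Corollary~\ref{cor: H*[1] coeff short}), the term indexed by $h$ carries the factor $\sum_{I\in\mathcal I_h}(-1)^{|I|}\binom{\lambda_1}{I_1}\cdots\binom{\lambda_{k-1}}{I_{k-1}}$, which vanishes unless $h$ is a non-negative integer combination of cycle lengths of $\sigma$, hence unless $\tilde g=\gcd(s_1,\dots,s_r)$ divides $h$. Under the extra hypothesis $\tilde g\mid h$ the statement is immediate and your case analysis degenerates: $b\le c$ for every prime $p$, so both valuations are at most $c$; equivalently, $g_h=\gcd(k-h,\tilde g)=\gcd(k,\tilde g)=g_0$ because $k\equiv k-h$ modulo every divisor of $\tilde g$. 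This is precisely the argument made explicitly in the proof of Corollary~\ref{cor: H*[1] coeff short}, so the downstream results are unaffected; but the proposition itself should either carry the hypothesis $\gcd(s_1,\dots,s_r)\mid h$ or be absorbed into that corollary.
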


\begin{proof}
    Define $\tilde g = \gcd(s_1, \dots, s_r)$, so $g_h = \gcd(k-h, \tilde g)$ and $g_0 = \gcd(k, \tilde g)$. We have $g_h | h$ if and only if $(k - h) | h$ and $\tilde g | h$. On the other hand $g_0 | h$ if and only if $k | h$ and $\tilde g | h$. So it suffices to show that $(k - h) | h$ if and only if $k | h$, which easily follows from the assumption that $k > h \ge 0$.
\end{proof}

\begin{proposition}\label{prop: H^*[1] coeff}
    Let $\sigma \in S_n$ be a permutation with cycle type $(s_1, \dots, s_r)$.
    For each $h \ge 0$, define $g := \gcd(k, s_1, \dots, s_r)$ and $g_h := \gcd(k-h, s_1, \dots, s_r)$.
    We have
    \[
    H^*[1](\sigma) = \sum_{h = 0}^{k-1} \left(
    \sum_{I \in \mathcal I_h} (-1)^{|I|} \binom{\lambda_1}{I_1} \cdots \binom{\lambda_{k-1}}{I_{k-1}}
    \right) g_h (k - h)^{r-1} d(g, h),
    \]
    where
    $d(g, h) = 1$ if $g$ divides $h$ and $d(g, h) = 0$ otherwise.
\end{proposition}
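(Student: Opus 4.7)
The plan is to simply combine the three tools developed just before the statement: the closed form for $H^*_m(\sigma)$ from Theorem~\ref{thm: H* coefficient in terms of representations}, the evaluation of the $\Phi$-sums in Lemma~\ref{lem: Phi count with g}, and the divisibility equivalence in Proposition~\ref{prop: gh divides h iff g0 divides h}. Since $H^*$ is a polynomial (Corollary~\ref{cor: degree of H* poly}), the evaluation $H^*[1](\sigma)$ is the finite sum of the values $H^*_m(\sigma)$.

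First, I would interchange the order of summation. Applying Theorem~\ref{thm: H* coefficient in terms of representations} and summing over $m\ge 0$, the $(h,I)$-term is constant in $m$, so
\[
H^*[1](\sigma) = \sum_{h=0}^{k-1}\left(\sum_{I \in \mathcal I_h} (-1)^{|I|}\binom{\lambda_1}{I_1}\cdots\binom{\lambda_{k-1}}{I_{k-1}}\right)\sum_{m\ge 0}\bigl|\Phi_{k-h}(\sigma, m(k-h)-h)\bigr|.
\]
The innermost sum is exactly the quantity computed in Lemma~\ref{lem: Phi count with g}, which equals $g_h(k-h)^{r-1}$ when $g_h \mid h$ and vanishes otherwise.

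Second, I would replace the divisibility condition $g_h \mid h$ by $g \mid h$. This is precisely the content of Proposition~\ref{prop: gh divides h iff g0 divides h} (with $g_0 = g$). So the indicator ``$g_h \mid h$'' may be rewritten as $d(g,h)$, and the formula
\[
H^*[1](\sigma) = \sum_{h=0}^{k-1}\left(\sum_{I \in \mathcal I_h} (-1)^{|I|}\binom{\lambda_1}{I_1}\cdots\binom{\lambda_{k-1}}{I_{k-1}}\right) g_h(k-h)^{r-1}\, d(g,h)
\]
follows, which is exactly the claim.

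There is essentially no obstacle: the statement is a bookkeeping assembly of previously established results. The only subtle point is justifying that we may switch the finite $h$-sum with the $m$-sum and that the latter is finite (so no convergence issues), but this is immediate from Corollary~\ref{cor: degree of H* poly}. One small care point is noting that the factor $d(g,h)$ and the value $g_h(k-h)^{r-1}$ are written separately even though the factor $g_h(k-h)^{r-1}$ is only meaningful (as the count of $\Phi$) when $d(g,h) = 1$; when $d(g,h)=0$ the whole term vanishes, so the stated formula is correct regardless of the interpretation of $g_h$.
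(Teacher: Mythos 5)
Your proposal is correct and is exactly the paper's argument: the paper's proof is the one-line statement that the result follows from Theorem~\ref{thm: H* coefficient in terms of representations}, Lemma~\ref{lem: Phi count with g}, and Proposition~\ref{prop: gh divides h iff g0 divides h}, and you have simply spelled out that assembly (sum over $m$, swap the finite sums, evaluate the $\Phi$-sum, and rewrite the divisibility condition). No gaps.
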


\begin{proof}
    Follows immediately from Theorem~\ref{thm: H* coefficient in terms of representations}, Lemma~\ref{lem: Phi count with g}, and Proposition~\ref{prop: gh divides h iff g0 divides h}.
\end{proof}

\begin{corollary}\label{cor: H*[1] coeff short}
    Let $\sigma \in S_n$ be a permutation with cycle type $(s_1, \dots, s_r)$.
    For each $h \ge 0$, define $g := \gcd(k, s_1, \dots, s_r)$.
    We have
    \[
    H^*[1](\sigma) = g\sum_{h = 0}^{k-1} \left(
    \sum_{I \in \mathcal I_h} (-1)^{|I|} \binom{\lambda_1}{I_1} \cdots \binom{\lambda_{k-1}}{I_{k-1}}
    \right) (k - h)^{r-1}.
    \]
\end{corollary}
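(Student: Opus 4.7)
My plan is to derive Corollary~\ref{cor: H*[1] coeff short} directly from Proposition~\ref{prop: H^*[1] coeff} by showing the two expressions agree term-by-term in the outer sum over $h$. After factoring out the common quantity $\bigl(\sum_{I \in \mathcal I_h}(-1)^{|I|}\prod_j \binom{\lambda_j}{I_j}\bigr)(k-h)^{r-1}$, the identity I need is $g_h\, d(g,h) = g$ for every $h$ for which that binomial sum is nonzero. Terms with a vanishing binomial sum are trivially equal on both sides, so the entire problem reduces to controlling the support of the inner sum in $h$.

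The central observation I intend to use is a simple support constraint. If $\sum_{I \in \mathcal I_h}(-1)^{|I|}\prod_j \binom{\lambda_j}{I_j} \neq 0$, then some summand must be nonzero, which forces $I_j = 0$ whenever $\lambda_j = 0$ for that particular $I$. Since $\lambda_j > 0$ only when $j \in \{s_1,\dots,s_r\}$, the defining relation $h = \sum_j j\, I_j$ of $\mathcal I_h$ expresses $h$ as a non-negative integer combination of the cycle lengths $s_i$. Consequently $\tilde g \mid h$, where $\tilde g := \gcd(s_1,\dots,s_r)$.

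With $\tilde g \mid h$ in hand, both required divisibility facts follow by elementary arithmetic. Since $g = \gcd(k,\tilde g)$ divides $\tilde g$, we get $g \mid h$, so $d(g,h) = 1$. And since $\tilde g \mid h$ we have $k - h \equiv k \pmod{\tilde g}$, so
\[
g_h \;=\; \gcd(k - h,\, \tilde g) \;=\; \gcd(k,\, \tilde g) \;=\; g.
\]
Substituting these two equalities into Proposition~\ref{prop: H^*[1] coeff} uniformly collapses the factor $g_h\, d(g,h)$ to $g$, which I may then pull out of the outer sum to recover the corollary. The only step with any real content is recognizing that the support of the inner binomial sum in $h$ is automatically contained in $\tilde g\ZZ$; once this is noticed, the remaining divisibility manipulations are routine.
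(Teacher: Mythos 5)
Your proposal is correct and follows essentially the same route as the paper: both start from Proposition~\ref{prop: H^*[1] coeff} and observe that a nonzero binomial product forces $I_j=0$ whenever $\lambda_j=0$, so $h=\sum_j jI_j$ is a combination of cycle lengths, whence $d(g,h)=1$ and $g_h=g$. If anything, your version is slightly more careful at the last step: you deduce $\tilde g\mid h$ and use $k-h\equiv k\pmod{\tilde g}$ to get $g_h=g$, whereas the paper infers $g_h=g$ from the weaker fact $g\mid h$, which on its own would not suffice.
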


\begin{proof}
    For each $h \ge 0$, define $g_h := \gcd(k-h, s_1, \dots, s_r)$.
    Consider the formula in Proposition~\ref{prop: H^*[1] coeff}. Let $h \ge 0$ and $I \in \mathcal I_h$. Assume that the product of binomials $\binom{\lambda_1}{I_1} \cdots \binom{\lambda_{k-1}}{I_{k-1}}$ is nonzero. For each $s \in [k-1]$ such that $\lambda_s \ge 1$, we have that $g | s$. Therefore each nonzero term of $1 \cdot I_1 + \dots + (k-1) \cdot I_{k-1}$ is divisible by $g$, hence $g$ divides $h$ and so $d(g,h) = 1$. Since $g$ divides $h$, we have that $g_h = \gcd(k-h,s_1, \dots, s_r) = \gcd(k, s_1, \dots, s_r) = g$. The result immediately follows. 
\end{proof}

\subsection{Counting non-hypersimplicial DOSPs}\label{sec: non-hyp dosps}

In this section we count the non-hypersimplicial $\sigma$-fixed $(k,n)$-DOSPs. 
Throughout this section, we will frequently make use of the alternative definition of DOSPs in terms of functions $[n] \rightarrow \ZZ/k\ZZ$. See the beginning of Section~\ref{sec: DOSP thm} and Definition~\ref{def: dist in dosp and winding no}.

\medskip

\noindent \textbf{Setup for counting non-hypersimplicial DOSPs.}
Fix $k < n$ and $\sigma\in S_n$. We denote by $\MD$ the set of $\sigma$-fixed non-hypersimplicial DOSPs and by $\MD^\tau\subseteq\MD$ the subset of DOSPs with turning number $\tau \in \ZZ/k\ZZ$.
We define the set $\Lambda$ of non-empty unions of cycles in $\sigma$:
\[
    \Lambda = \set{C_{i_1}\cup C_{i_2}\cup \cdots\cup C_{i_s}\colon \set{i_1, i_2,\ldots, i_s}\subseteq [r], s > 0}.
\]
For each $u=C_{i_1}\cup C_{i_2}\cup \cdots\cup C_{i_s} \in \Lambda$, we will denote the corresponding set $\set{i_1, i_2,\ldots, i_s}\subseteq [r]$ by $\ind(u)$.
Furthermore, we define the subset $\MD_u^{\tau}\subseteq\MD^{\tau}$ of DOSPs containing a tuple $(L,\ell)$ such that:
\begin{itemize}
    \item $\abs{L}\leq \ell$ (we call such $L$ a \emph{bad set}),
    \item $L$ completely lies in $u$, and
    \item for every $C_i\subseteq u$, $L\cap C_i$ is non-empty.
\end{itemize}
In other words, $\MD^\tau_u$ is the set of all $\sigma$-fixed DOSPs $D$ such that $u$ is a disjoint union of bad sets of $D$, and those bad sets form a single $\sigma$ orbit. 
Note, for any $D\in\MD^\tau$ there exists $u \in \Lambda$ such that $D\in\MD_u^{\tau}$. Explicitly, $D$ is non-hypersimplicial so it has a bad set, say $(L, \ell)$, then $D \in \MD_u^\tau$ where $u = L \cup \sigma(L) \cup \sigma^2(L) \cup \dots \cup \sigma^{o(\sigma)-1}(L)$ is the $\sigma$-orbit of $L$.

\begin{lemma}\label{lem: count SOME non-hyp DOSPs}
    Fix $2\leq k < n$ and let $\sigma\in S_n$ be a permutation with cycle type $(s_1,s_2,\dots,s_r)$.
    Define $g=\gcd(s_1,\ldots,s_r,k)$ and let $\tau\in\ZZ/k\ZZ$ such that $g\cdot\tau = 0$. Fix $h \in [k-1]$ and let $J\in\binom{\Lambda}{h}$ be a non-empty subset of $\Lambda$ such that the elements of $J$ are pairwise disjoint.
    Then
    \[
        \abs*{\bigcap_{u\in J}\MD^\tau_u} =
        \frac{((k-i)/o(\tau)+h-1)!}{((k-i)/o(\tau))!}
        o(\tau)^{j-1}
        (k-i)^{r-j}
    \]
    where $o(\tau)$ denotes the order of $\tau$,
    $j$ is the number of elements in $\bigcup_{u\in J}\ind(u)$,
    and $i$ is the number of elements in $\bigcup_{u\in J} $u.
\end{lemma}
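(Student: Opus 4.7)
The plan is to identify each $\sigma$-fixed DOSP of turning number $\tau$ with an equivalence class of functions $f \colon [n] \to \ZZ/k\ZZ$ (modulo additive constant) satisfying $f(\sigma(x)) = f(x) - \tau$, and then to translate membership in $\bigcap_{u \in J} \MD^\tau_u$ into conditions on the cosets visited by each cycle. Writing $d := k/o(\tau)$ and $\gamma_i := f(q_i) + \langle\tau\rangle \in \ZZ/k\ZZ / \langle\tau\rangle$, the iteration $f(\sigma^t(q_i)) = f(q_i) - t\tau$ shows that each cycle $C_i$ hits every element of $\gamma_i$ exactly $s_i/o(\tau)$ times (the divisibilities $o(\tau) \mid s_i$ and $o(\tau) \mid k$ follow from $o(\tau) \mid g$, which is equivalent to the hypothesis $g\tau = 0$). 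A function $f$ is thus determined by the coset tuple $(\gamma_1,\dots,\gamma_r)$ together with $o(\tau)^r$ independent within-coset choices for the $f(q_i)$; since $\ZZ/k\ZZ$ acts freely on functions by constant translation, the count I want equals the count of such data divided by $k$.

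Next, I would translate $D \in \bigcap_{u \in J}\MD^\tau_u$ into explicit constraints on $(\gamma_1,\dots,\gamma_r)$. Since each bad set inside $u$ must be a fiber $L = f^{-1}(c)$ lying in $u$ and meeting every cycle of $u$, this forces: (a) all cycles in $\ind(u)$ share a common coset $\gamma^{(u)}$; (b) the $\gamma^{(u)}$ are pairwise distinct across $u \in J$; (c) no cycle outside $I := \bigcup_u \ind(u)$ lies in any $\gamma^{(u)}$. Because $\sigma$ permutes fibers via $\sigma(f^{-1}(c)) = f^{-1}(c - \tau)$, every part in the $\sigma$-orbit at $u$ has common size $|u|/o(\tau)$ and common $\ell$-value equal to the cyclic gap in $D := \{\gamma_1,\dots,\gamma_r\} \subseteq \ZZ/d\ZZ$ immediately after $\gamma^{(u)}$; the bad inequality $|L| \le \ell$ thus becomes the single local condition (d) that this cyclic gap is at least $b_u := |u|/o(\tau)$.

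The count of coset tuples then decomposes into placing the marked cosets $\gamma^{(u)}$ in $\ZZ/d\ZZ$ and distributing the remaining $r-j$ cycles among the unmarked slots. A cyclic order of $J$ contributes $(h-1)!$; the admissible gap sequences $g_1,\dots,g_h$ with $g_\ell \ge b_{u_\ell}$ and $\sum g_\ell = d$ biject via $g'_\ell := g_\ell - b_{u_\ell}$ with non-negative compositions of $(k-i)/o(\tau)$ into $h$ parts, counted by $\binom{(k-i)/o(\tau)+h-1}{h-1}$; a starting position in $\ZZ/d\ZZ$ contributes $d$. Each outside cycle must then avoid the $b_u - 1$ unmarked slots immediately following every $\gamma^{(u)}$; these $h$ blocks are pairwise disjoint by (d), totalling $i/o(\tau) - h$ forbidden slots and leaving $(k-i)/o(\tau)$ allowed unmarked slots, hence $((k-i)/o(\tau))^{r-j}$ arrangements. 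Multiplying by $o(\tau)^r$ for the within-coset freedom, dividing by $k$, using $d/k = 1/o(\tau)$, and folding $(h-1)!\binom{(k-i)/o(\tau)+h-1}{h-1} = ((k-i)/o(\tau)+h-1)!/((k-i)/o(\tau))!$ assembles the claimed formula.

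The main obstacle will be the second step: verifying that the single-bad-set definition of $\MD^\tau_u$ coincides with the ``union of orbit-of-bad-sets'' reformulation, and that the bad condition factors cleanly as a local cyclic-gap condition on $D$. The two crucial observations are $\sigma(f^{-1}(c)) = f^{-1}(c-\tau)$ (so parts in a fiber orbit share both size and $\ell$-value, making badness $\sigma$-invariant) and the identification of the $\ZZ/k\ZZ$-gap between two cosets of $\langle\tau\rangle$ with the corresponding cyclic $\ZZ/d\ZZ$-gap (so the bad condition depends only on the arrangement of $D$ and not on the within-coset representatives). Once these reductions are in place, the remaining counting is a routine application of stars-and-bars together with the free-action orbit count that produces the division by $k$.
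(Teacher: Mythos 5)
Your proposal is correct and follows essentially the same route as the paper's proof: both reduce to arranging the $h$ orbit-blocks of bad sets cyclically around the DOSP with a stars-and-bars count $\binom{(k-i)/o(\tau)+h-1}{h-1}$ on the gaps, an $(h-1)!$ factor for their order, $o(\tau)^{j-1}$ for the relative positions of the distinguished elements $q_b$ within their orbits, and $(k-i)^{r-j}$ for the remaining cycles. Your passage to the quotient $\ZZ/(k/o(\tau))\ZZ$ and division by the free translation action is just a cleaner packaging of the paper's normalisation $f_D(L_1)=0$.
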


Before giving the proof, we will outline the concepts in the proof with an example.

\begin{example}\label{ex: non-hyp dosp}
    Fix $k=12$, $n=24$, and $\sigma\in S_n$ with cycle type $(3,3,6,3,9)$.
    This means that $r=5$ and $g=3$.
    For simplicity, we assume that the cycle sets are
    \[
    C_1=\set{1,2,3},\ 
    C_2=\set{4,5,6},\ 
    C_3=\set{7,8,\ldots,12},\ 
    C_4=\set{13,14,15},\ 
    C_5=\set{16,17,\ldots,24}. 
    \]
    For each cycle $C_i$, we fix a distinguished element $q_i \in C_i$. Explicitly, we choose $q_i$ to be the smallest element: $q_1=1$, $q_2=4$, $q_3=7$ etc.
    Let us fix a subset $J = \{ u_1, u_2\}$ where $u_1=C_1\cup C_2$ and $u_2=C_4$.
    This gives us 
    \[
    \bigcup_{u \in J} u = \{1,2,3, 4, 5, 6, 13, 14, 15\}
    \quad \text{and} \quad
    \bigcup_{u \in J} \ind(u) = \{1,2,4\},
    \] hence $i=9$ and $j=3$.
    Lastly we fix $\tau=8 \in \ZZ/12\ZZ$ meaning that $o(\tau)=3$.
    We will give an overview of the proof of Lemma~\ref{lem: count SOME non-hyp DOSPs}, which counts the number of non-hypersimplicial DOSPs in $\MD^\tau_{u_1} \cap \MD^\tau_{u_2}$. To do this, we construct DOSPs in this set. We imagine starting with an \emph{empty DOSP} $(L_1 = \{ \}, \dots, L_{12} = \{ \})$ of $k = 12$ empty sets. We then consider the possible ways to place the cycles into the DOSP. Note that the turning number $\tau = 8$ is fixed, so each $\sigma$-orbit consists of $o(\tau) = 3$ sets of the DOSP. We will place the cycles into the DOSP with three steps.
    
    Our first step is to distribute $u_1$ across a single $\sigma$-orbit of $o(\tau)=3$ sets of $2$ elements each and to adorn each of these three sets with a decoration $\ell_i \geq 2$ so that each set is a bad set of the resulting DOSP.
    Our second step is to distribute $u_2$ across $3$ singletons, which we note always result in bad sets in the final DOSP.
    The third step is to put the rest of the elements into the remaining spaces.
    See Figure~\ref{fig: example dosp} for a specific instance.

    \medskip
    \noindent \textbf{Step 1.}
    A $\sigma$-fixed DOSP $D$ with turning number $\tau$ is completely determined by the values the function $f_D$ takes on the distinguished elements $q_i$.
    In Figure~\ref{fig: example dosp}, the $q_i$ are the underlined elements.
    As a starting point, we will assume that $f_D(q_1) = 0$, or in other words $1 \in L_1$. This choice fixes the positions of the elements in $C_1 = \{1,2,3\}$.
    Since $\tau = 8$, it follows that $f_D(u_1)=\set{0,4,8}$, meaning that we have $o(\tau)=3$ choices for the position of $4=q_2\in u_1$. Once we have placed $u = C_1 \cup C_2$, we mark the positions $0,1, 4,5, 8,9$ as \emph{filled}, this guarantees that each set in the resulting DOSP containing elements of $u_1$ are bad sets. In Figure~\ref{fig: example dosp}, these filled sets include the white circles.

    \medskip
    \noindent \textbf{Step 2.}
    The placement of the element $13=q_4\in u_2$ is restricted to the locations $\set{2,6,10}$ and $\set{3,7,11}$ because $\set{1,5,9}$ (the white spaces in Figure~\ref{fig: example dosp}) need to remain clear.
    This gives us $6$ choices for $q_4$.
    Notice that we count possible locations for a $q_i$ in sets of $3$.
    This corresponds to the factor $o(\tau)^{j-1}$ in the formula in the lemma.
    
    \medskip
    \noindent \textbf{Step 3.}
    Finally, we must choose placements for the remaining cycles $C_3$ and $C_5$. After having placed $u_1$ and $u_2$, there are only $3$ spaced left in the DOSP. So, we have $3$ choices for $q_3=7$ and $q_5=16$, which corresponds to $9$ choices to finish off the DOSP.
    This part corresponds to the right-most factor $(k-i)^{r-j}$ in the formula from the lemma.
\end{example}

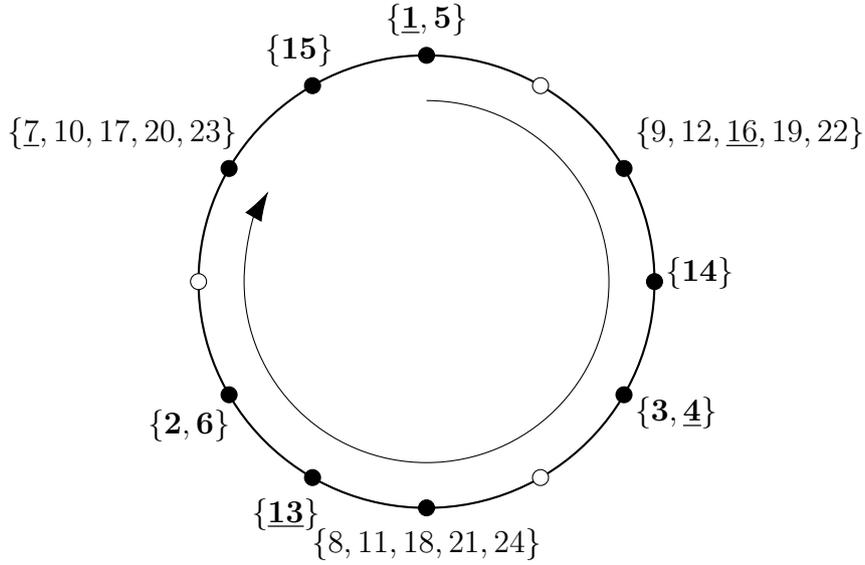
\begin{figure}
\centering
\begin{tikzpicture}[scale = 0.8]

\draw[thick] (6,12.25) circle (3.75);

\vertex [label={$\mathbf{\{ \underline{1}, 5 \}}$}] at ({6+3.75*cos(90)},{12.25+3.75*sin(90)}) {};
\smallvertex [] at ({6+3.75*cos(60)},{12.25+3.75*sin(60)}) {};
\vertex [label={[anchor=south west,] $\{ 9, 12, \underline{16}, 19, 22 \}$}] at ({6+3.75*cos(30)},{12.25+3.75*sin(30)}) {};
\vertex [label={[anchor=west,] $\mathbf{\{ 14 \}}$}]
at ({6+3.75*cos(0)},{12.25+3.75*sin(0)}) {};
\vertex [label={[anchor=north west,] $\mathbf{\{ 3, \underline{4} \}}$}]
at ({6+3.75*cos(-30)},{12.25+3.75*sin(-30)}) {};
\smallvertex []
at ({6+3.75*cos(-60)},{12.25+3.75*sin(-60)}) {};
\vertex [label={[yshift=-27] $\{ 8, 11, 18, 21, 24 \}$}]
at ({6+3.75*cos(-90)},{12.25+3.75*sin(-90)}) {};
\vertex [label={[xshift=-10, yshift=-27]$\mathbf{\{ \underline{13} \}}$}]
at ({6+3.75*cos(-120)},{12.25+3.75*sin(-120)}) {};
\vertex [label={[xshift=-15, yshift=-25]$\mathbf{\{ 2, 6 \}}$}]
at ({6+3.75*cos(-150)},{12.25+3.75*sin(-150)}) {};
\smallvertex []
at ({6+3.75*cos(-180)},{12.25+3.75*sin(-180)}) {};
\vertex [label={[xshift=-40]$\{ \underline{7}, 10, 17, 20, 23 \}$}]
at ({6+3.75*cos(-210)},{12.25+3.75*sin(-210)}) {};
\vertex [label={[xshift=-5]$\mathbf{\{ 15 \}}$}]
at ({6+3.75*cos(-240)},{12.25+3.75*sin(-240)}) {};

\draw[-{Latex[length=4mm, width=2.5mm]}] ({6+3*cos(90)},{12.25+3*sin(90)}) arc[start angle=90, end angle=-210, radius=3];

\end{tikzpicture}

\caption{A DOSP with the setup of Example~\ref{ex: non-hyp dosp}, the choices for the $q_i$ are $f_D(1)=0$, $f_D(4)=4$, $f_D(7)=10$ $f_D(13)=7$, $f_D(16)=3$, are the underlined elements. The bold sets are the bad sets corresponding to $u_1$ and $u_2$}
\label{fig: example dosp}
\end{figure}

\begin{proof}[Proof of Lemma~\ref{lem: count SOME non-hyp DOSPs}]
    For the purpose of this proof, we will endow the set $\ZZ/k\ZZ$ with a total ordering induced by identifying it with the set $\set{0,1,\ldots,k-1}$.
    We write the elements of $J$ as $u_1,u_2,\ldots,u_h$.
    Let 
    \[
    D = ((L_1, \ell_1), (L_2, \ell_2), \dots, (L_t, \ell_t)) \in \bigcap_{u \in J} \MD^\tau_u
    \]
    be a DOSP.
    Without loss of generality, we may assume that $L_1 \subseteq u_1$.
    Since the turning number of $D$ is $\tau$, we may assume that $f_D(L_1) = 0$ and $f_D(\sigma(L_1)) = \tau$.
    For each $u_a$ with $a \in \{2, \dots h\}$, there exists a set $L$ of $D$ such that $0 < f_D(L) < \tau$ with $L \subseteq u_a $ and we write $p_a = f_D(L)$ for the value of this function on $L$.
    We also fix $p_1=0$.
    Let us count the DOSPs $D$, as above, such that $p_2 < \dots < p_h$.

    Suppose that we are given the values of function $f_D(L)$ for each set $L \subseteq u_a$ over all $u_a$.
    Let us count the ways to distribute the elements of the $u_a$ into the DOSP if $u_a$ is already placed.
    For each $b \in \ind(u_a)$, there are $k/o(\tau)$ different possible values for $f_D(q_b)$.
    So, in total, there are $o(\tau)^{j-1}$ possible choices for the positions of the $q$'s, where the first $q$ is put in the first position and the other $q$'s, of which there are $j-1$, are placed relatively to the first. 
    
    Now let us count the ways to position the sets $u_1, \dots, u_h$ in the DOSP. 
    The position of $u_1$ and its corresponding sets of the DOSP is fixed.
    Then we use a stars and bars argument to count the  placements the remaining spaces between the $u_a$'s:
    \[
    (L(u_1)) \ \square \ (L(u_2)) \ \square \ (L(u_3)) \ \square  \ \dots \ \square \ (L(u_h)) \ \square,
    \]
    where $L(u_a)$ is the set of the DOSP with $f_D(L(u_a))=p_a$ and the boxes represent some number of spaces.
    Since each set $L(u_a)$ is a bad set, it takes up at least $|L(u_a)|$ spaces of the DOSP.
    Since $u_a$ is a $\sigma$-orbit of bad sets, we have that each set that partitions $u_a$ is bad and together they take up $|u_a|$ spaces of the DOSP.
    So the bad sets, whose union is $u_1 \cup \dots \cup u_h$, take up $i$ spaces of the DOSP.
    Hence, in total there are $k-i$ spaces to place between the $u_a$'s.
    Note that whenever we place one space, its $\sigma$-orbit has length $o(\tau)$.
    So we are free to choose the positions of $(k-i)/o(\tau)$ spaces and the rest are determined by their $\sigma$-orbits.
    So by stars and bars there are $\binom{(k-i)/o(\tau) + h-1}{(k-i)/o(\tau)} = \binom{(k-i)/o(\tau) + h-1}{h-1}$ many ways place the spaces.

    So far, we have fixed the position of all cycles of $\sigma$ in $u_1, \dots, u_h$, i.e., we have placed $j$ cycles into the DOSP.
    There are $r-j$ remaining cycles.
    Placing a cycle $C_c$ into the DOSP is equivalent to choosing the position of $q_c$.
    Each $q_c$ can be placed into the $k-i$ spaces.
    Hence, there are $(k-i)^{r-j}$ DOSPs with the given $u_a$ positions.

    Finally, there are $(h-1)!$ different total orderings of $p_2,  \dots, p_h$ and each gives the same number of DOSPs.
    So, the number of $\sigma$-fixed non-hypersimplicial DOSPs with turning number $\tau$ is:
    \[
    \binom{(k-i)/o(\tau)+h-1}{h-1}(h-1)!o(\tau)^{j-1}(k-i)^{r-j} 
    = 
    \frac{((k-i)/o(\tau)+h-1)!}{((k-i)/o(\tau))!}o(\tau)^{j-1}(k-i)^{r-j}.
    \]
\end{proof}

We proceed to count the $\sigma$-fixed non-hypersimplicial DOSPs.

\begin{lemma}\label{lem: count non-hyp DOSPs}
    Fix $2\leq k < n$ and let $\sigma\in S_n$ be a permutation with cycle type $(s_1,s_2,\dots,s_r)$.
    For each $i\in [n]$, let $\lambda_i$ be the number of length $i$-cycles of $\sigma$.
    Let $g=\gcd(s_1,\ldots,s_r,k)$ and define the set $T=\set{\tau\in\ZZ/k\ZZ\colon g\cdot\tau=0}$.
    The number of $\sigma$-fixed non-hypersimplicial DOSPs is
    \begin{equation}\label{eq: non-hypersimplicial dosps}
    \sum_{\tau \in T} 
    \sum_{h = 1}^{k-1} 
        (-1)^{h+1} 
        \sum_{i = h}^{k-1} 
        \sum_{j = 1}^i 
        \frac{((k-i)/o(\tau)+h-1)!}{((k-i)/o(\tau))!}
        o(\tau)^{j-1}
        (k-i)^{r-j}
        \left(
    \sum_{\substack{I \in \mathcal I_i \\ |I| = j}}
    \binom{\lambda_1}{I_1} \cdots \binom{\lambda_{k-1}}{I_{k-1}}
    \right)
    \stirling{j}{h}.
    \end{equation}
\end{lemma}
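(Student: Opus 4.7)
The plan is to count $|\MD|$ by inclusion–exclusion over $\sigma$-orbits of bad sets. By Lemma~\ref{lem: turning number and g}, every $\sigma$-fixed DOSP has turning number in $T$, so I would decompose $\MD = \bigsqcup_{\tau \in T} \MD^\tau$ and count each $\MD^\tau$ separately. As noted in the setup, every $D \in \MD^\tau$ has a bad set $(L, \ell)$ whose $\sigma$-orbit $u = \bigcup_{j \geq 0} \sigma^j(L) \in \Lambda$ witnesses $D \in \MD^\tau_u$. Hence $\MD^\tau = \bigcup_{u \in \Lambda} \MD^\tau_u$, and inclusion–exclusion gives
\[
|\MD^\tau| = \sum_{\emptyset \neq J \subseteq \Lambda}(-1)^{|J|+1}\left|\bigcap_{u \in J} \MD^\tau_u\right|.
\]

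The key geometric step I would prove is that $\bigcap_{u \in J} \MD^\tau_u = \emptyset$ unless the elements of $J$ are pairwise disjoint, which reduces the problem to the setting of Lemma~\ref{lem: count SOME non-hyp DOSPs}. Suppose $D \in \MD^\tau_u \cap \MD^\tau_{u'}$ for distinct $u, u' \in \Lambda$; then $D$ has bad sets $L \subseteq u$ and $L' \subseteq u'$ whose $\sigma$-orbits (as subsets of $[n]$) are exactly $u$ and $u'$. Since $u \neq u'$, the sets $L$ and $L'$ lie in distinct $\sigma$-orbits among the parts of $D$, so $\sigma^j(L)$ and $\sigma^{j'}(L')$ are distinct parts of the DOSP for all $j, j'$, and are therefore disjoint as subsets of $[n]$; taking unions forces $u \cap u' = \emptyset$. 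Moreover, since $n > k$, at least one part of any $D \in \MD^\tau$ must be non-bad (otherwise $n = \sum |L_j| \leq \sum \ell_j = k$), so the total size of the bad sets of $D$ is at most $k-1$; consequently $i := |\bigcup_{u \in J} u| \leq k - 1$ whenever the intersection is non-empty, and a fortiori no cycle appearing in $\bigcup_{u \in J} u$ has length $\geq k$.

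With these reductions in place, for each pairwise disjoint $J$ of size $h$ Lemma~\ref{lem: count SOME non-hyp DOSPs} computes $|\bigcap_{u \in J} \MD^\tau_u|$ in terms of the parameters $i$ and $j := |\bigcup_{u \in J} \ind(u)|$. I would then reorganise the inclusion–exclusion sum by the triple $(h, i, j)$. Specifying such a $J$ amounts to choosing a $j$-subset $U \subseteq [r]$ of cycle indices with $\sum_{b \in U} s_b = i$ and then partitioning $U$ into $h$ unlabelled non-empty blocks, which become the sets $\ind(u)$ for $u \in J$. Enumerating $U$ by its cycle-length profile $I = (I_1, \ldots, I_{k-1}) \in \mathcal I_i$ with $|I| = j$ contributes $\binom{\lambda_1}{I_1} \cdots \binom{\lambda_{k-1}}{I_{k-1}}$ choices, and partitioning $U$ contributes $\stirling{j}{h}$. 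Multiplying these counts by the expression from Lemma~\ref{lem: count SOME non-hyp DOSPs} and summing over $\tau, h, i, j, I$ produces the claimed formula.

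The main obstacle is the disjointness reduction in the second paragraph: one has to keep careful track of the interplay between the bad sets of $D$, their $\sigma$-orbits among the parts of the DOSP, and the corresponding unions of cycles in $\Lambda$. Once that is in hand, the remaining reorganisation step is a routine matching of indices between the inclusion–exclusion and the stated sum.
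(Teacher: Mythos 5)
Your proposal is correct and follows essentially the same route as the paper: decompose by turning number, apply inclusion--exclusion over $\Lambda$, observe that distinct elements of $J$ with non-empty intersection force the intersection $\bigcap_{u\in J}\MD^\tau_u$ to be empty, and then count the admissible $J$ with fixed parameters $(h,i,j)$ via the binomial product and the Stirling number. The only cosmetic difference is that you bound $i\le k-1$ directly from the existence of a non-bad part, whereas the paper bounds $h\le k-1$; both are equivalent in substance.
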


\begin{proof}
    Let $\tau\in\ZZ/k\ZZ$ satisfy $g\cdot\tau = 0$ and let $\MD^{\tau}$ be the set of non-hypersimplicial $\sigma$-fixed DOSPs with turning number $\tau$.
    The number of $\sigma$-fixed non-hypersimplicial DOSPs is
    \[
    \sum_{\tau\in T} \abs{\MD^{\tau}}.
    \]
    By the inclusion-exclusion principle, we have
    \[
    \abs*{\MD^{\tau}} = 
    \abs*{\bigcup_{u\in\Lambda}\MD_u^{\tau}} = 
    \sum_{h\geq 1} (-1)^{h+1} \sum_{J\in\binom{\Lambda}{h}}\abs*{\bigcap_{u\in J}\MD_u^{\tau}}.
    \]
    Given a non-empty subset $J\subseteq\Lambda$, suppose we have $u_1,u_2\in J$.
    For a DOSP $D$ to lie both in $\MD_{u_1}$ and $\MD_{u_2}$, it means there exist (not necessarily distinct) sets $L_1$ and $L_2$ whose $\sigma$-orbits are $u_1$ and $u_2$ respectively.
    In particular, if $u_1\cap u_2\neq\emptyset$, the $\sigma$-orbits must be the same and $u_1=u_2$.
    Hence, we may always assume that the sets $u_i$ contained in $J$ are disjoint.
    It also follows that $h$ is bounded by the number of sets $L_i$, which is $k$.
    In the case $h=k$, every $L_i$ satisfies $\abs{L_i} = \ell_i = 1$, which means that $n=k$, a contradiction.
    Thus $1\leq h\leq k-1$.
    We define the set
    \[
        \Lambda(h,i,j) = \set*{ J\in\binom{\Lambda}{h} : \abs*{\bigcup_{u\in J}u} = i\text{ and } \abs*{\bigcup_{u\in J}\ind(u)} = j }
    \]
    This is the collection of all $h$-subsets of $\Lambda$ involving exactly $j$ distinct cycles that contain a total of $i$ elements across all cycles.
    The cardinality of $\Lambda(h,i,j)$ is exactly
    \[
        \left(
            \sum_{\substack{I\in\mathcal{I}_i \\ \abs{I} = j}}
            \binom{\lambda_1}{I_1} \cdots \binom{\lambda_{k-1}}{I_{k-1}}
        \right)\stirling{j}{h},
    \]
    which follows from the argument that there are $\stirling{j}{h}$ ways to partition $j$ distinct cycles into $h$ sets and that we choose $I_1$ many fixed points of $\sigma$, $I_2$ many $2$-cycles of $\sigma$, and so on, such that $\abs{I} = j$ and $1\cdot I_1 + 2\cdot I_2 +\cdots + (k-1)\cdot I_{k-1} = i$.
    With this, we apply Lemma~\ref{lem: count SOME non-hyp DOSPs} and rearrange the previous formula:
    \begin{align*}
    \abs*{\MD^{\tau}}
    & = 
    \sum_{h=1}^{k-1}
    (-1)^{h+1}\sum_{i=h}^{k-1}
    \sum_{j=1}^r
    \sum_{J\in\Lambda(h,i,j)} \abs*{\bigcap_{u\in J}\MD_u^{\tau}} \\
    & = \sum_{h=1}^{k-1}
    (-1)^{h+1}\sum_{i=h}^{k-1}
    \sum_{j=1}^r
    \sum_{J\in\Lambda(h,i,j)}
    \frac{((k-i)/o(\tau)+h-1)!}{((k-i)/o(\tau)!}
        o(\tau)^{j-1}
        (k-i)^{r-j} \\
    & = \sum_{h=1}^{k-1}
    (-1)^{h+1}\sum_{i=h}^{k-1}
    \sum_{j=1}^r
    \frac{((k-i)/o(\tau)+h-1)!}{((k-i)/o(\tau)!}
        o(\tau)^{j-1}
        (k-i)^{r-j} 
    \sum_{J\in\Lambda(h,i,j)} 1 \\
    & = \sum_{h=1}^{k-1}
    (-1)^{h+1}\sum_{i=h}^{k-1}
    \sum_{j=1}^r
    \frac{((k-i)/o(\tau)+h-1)!}{((k-i)/o(\tau)!}
        o(\tau)^{j-1}
        (k-i)^{r-j} 
    \left(
        \sum_{\substack{I\in\mathcal{I}_i \\ \abs{I} = j}}
        \binom{\lambda_1}{I_1} \cdots \binom{\lambda_{k-1}}{I_{k-1}}
    \right)\stirling{j}{h}.
    \end{align*}
    
\end{proof}

\subsection{Proof of main result}
In this section, we simplify the formula in Lemma~\ref{lem: count non-hyp DOSPs} to prove Theorem~\ref{thm: H*[1] is perm character of DOSPs under Sn}. We use the following identity derived from the falling factorial identity for Stirling numbers. For each $j \ge 1$, define the Laurent polynomial $F_j(y) \in \QQ[y, y^{-1}]$ by
    \[
    F_j(y) := 
    \left(
    \frac{1}{y}
    \right)^{j-1}
    \sum_{h = 1}^j 
    (-1)^{h+1} \stirling{j}{h} (y+1)(y+2) \cdots (y+h-1). 
    \]
These Laurent polynomials are, in fact, constants.

\begin{lemma}\label{lem: stirling identity}
    We have $F_j(y) = (-1)^{j+1}$.
\end{lemma}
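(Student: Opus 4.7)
The plan is to reduce the sum defining $F_j(y)$ to the falling factorial identity from Proposition~\ref{prop: stirling2} via a sign-flip substitution, so that it collapses telescopically.

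The first step is to rewrite the rising-factorial factor $(y+1)(y+2)\cdots(y+h-1)$ in terms of a falling factorial of $-y$. Specifically, I would observe that
\[
(-y)_h = (-y)(-y-1)\cdots(-y-h+1) = (-1)^h\, y(y+1)\cdots(y+h-1),
\]
so $(y+1)(y+2)\cdots(y+h-1) = \tfrac{(-1)^h}{y}(-y)_h$ (interpreting the empty product in the $h=1$ case as $1$, consistent with $(-y)_1 = -y$).

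Substituting this into the definition of $F_j(y)$, the alternating signs $(-1)^{h+1}$ and $(-1)^h$ combine to give a uniform $-1$, and I get
\[
F_j(y) = \left(\frac{1}{y}\right)^{j-1} \cdot \left(-\frac{1}{y}\right) \sum_{h=1}^j \stirling{j}{h} (-y)_h.
\]
Since $\stirling{j}{0} = 0$ for $j \geq 1$, I can extend the sum to start from $h=0$ without changing its value. Now Proposition~\ref{prop: stirling2} applied with $x = -y$ gives $\sum_{h=0}^j \stirling{j}{h}(-y)_h = (-y)^j = (-1)^j y^j$.

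The final step is routine arithmetic: plugging in yields
\[
F_j(y) = -\frac{1}{y^{j}} \cdot (-1)^j y^j = (-1)^{j+1},
\]
as desired. The only step requiring care is the rising/falling factorial conversion and bookkeeping of the signs; once that reformulation is in place, the Stirling identity closes the argument immediately, so there is no serious obstacle.
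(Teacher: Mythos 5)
Your proof is correct and follows essentially the same route as the paper: convert the rising factorial $(y+1)\cdots(y+h-1)$ into the falling factorial $(-y)_h$ and invoke Proposition~\ref{prop: stirling2} with $x=-y$. In fact your sign bookkeeping is more careful than the paper's own one-line computation (which contains a harmless sign typo in the intermediate identity), and your remark that the sum may be extended to $h=0$ because $\stirling{j}{0}=0$ for $j\ge 1$ is a detail the paper leaves implicit.
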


\begin{proof}
    For each $h \ge 0$, notice that
    $
        (-1)^{h+1} (y+1)(y+2)\cdots (y+h-1) = \frac{(-y)_h}{y}.
    $
    By Proposition~\ref{prop: stirling2}, with $x = -y$, we obtain $F_j(y)=\frac{(-y)^{j-1}}{y^{j-1}}$, which concludes the proof.
\end{proof}

\begin{proof}[Proof of Theorem~\ref{thm: H*[1] is perm character of DOSPs under Sn}]
    By Corollary~\ref{cor: H*[1] coeff short}, we have
    \[
    g k^{r-1} - H^*[1](\sigma) =
        -g\sum_{h = 1}^{k-1}
    \left(
    \sum_{I \in \mathcal I_i}
    (-1)^{|I|}
    \binom{\lambda_1}{I_1}
    \cdots
    \binom{\lambda_{k-1}}{I_{k-1}}
    \right)
    (k - i)^{r-1}.
    \]
    We will show that the expression above is equal to the number of $\sigma$-fixed non-hypersimplicial $(k,n)$-DOSPs.
    Define the set $T = \set{\tau\in\ZZ/k\ZZ : g \tau = 0}$ and note that $|T| = g$.
    By Lemma~\ref{lem: count non-hyp DOSPs}, we have that the number of $\sigma$-fixed non-hypersimplicial $(k,n)$-DOSPs is
    \[
    \sum_{\tau \in T} 
    \sum_{h = 1}^{k-1} 
        (-1)^{h+1} 
        \sum_{i = h}^{k-1} 
        \sum_{j = 1}^i 
        \frac{((k-i)/o(\tau)+h-1)!}{((k-i)/o(\tau))!}
        o(\tau)^{j-1}
        (k-i)^{r-j}
        \left(
    \sum_{\substack{I \in \mathcal I_i \\ |I| = j}}
    \binom{\lambda_1}{I_1} \cdots \binom{\lambda_{k-1}}{I_{k-1}}
    \right)
    \stirling{j}{h}.
    \]
We reorder the sums in this expression to obtain
\begin{align*}
& \sum_{\tau \in T} 
    \sum_{h = 1}^{k-1} 
        (-1)^{h+1} 
        \sum_{i = h}^{k-1} 
        \sum_{j = 1}^i 
        \frac{((k-i)/o(\tau)+h-1)!}{((k-i)/o(\tau))!}
        o(\tau)^{j-1}
        (k-i)^{r-j}
        \left(
    \sum_{\substack{I \in \mathcal I_i \\ |I| = j}}
    \binom{\lambda_1}{I_1} \cdots \binom{\lambda_{k-1}}{I_{k-1}}
    \right)
    \stirling{j}{h} \\
=& 
    \sum_{\tau \in T}
    \sum_{i = 1}^{k-1} 
    \sum_{j = 1}^{i}
    \sum_{h = 1}^{i} 
        (-1)^{h+1} 
        \frac{((k-i)/o(\tau)+h-1)!}{((k-i)/o(\tau))!}
        o(\tau)^{j-1}
        (k-i)^{r-j}
        \left(
    \sum_{\substack{I \in \mathcal I_i \\ |I| = j}}
    \binom{\lambda_1}{I_1} \cdots \binom{\lambda_{k-1}}{I_{k-1}}
    \right)
    \stirling{j}{h} \\
=& 
    \sum_{\tau \in T}
    \sum_{i = 1}^{k-1} 
    \sum_{j = 1}^{i}
        \left(
        \sum_{\substack{I \in \mathcal I_i \\ |I| = j}}
        \binom{\lambda_1}{I_1} \cdots \binom{\lambda_{k-1}}{I_{k-1}}
        \right)
        (k-i)^{r-j}
        o(\tau)^{j-1}
    \sum_{h = 1}^{i} 
        (-1)^{h+1} 
        \frac{((k-i)/o(\tau)+h-1)!}{((k-i)/o(\tau))!}
    \stirling{j}{h}.
\end{align*}
Next, we apply Lemma~\ref{lem: stirling identity} to the above expression by setting $y = (k-i)/o(\tau)$. Note that $o(\tau)^{j-1} = (k-i)^{j-1} (1/y)^{j-1}$. So, the above expression is equal to the following
\begin{align*}
&
    \sum_{\tau \in T}
    \sum_{i = 1}^{k-1} 
    \sum_{j = 1}^{i}
        \left(
        \sum_{\substack{I \in \mathcal I_i \\ |I| = j}}
        \binom{\lambda_1}{I_1} \cdots \binom{\lambda_{k-1}}{I_{k-1}}
        \right)
        (k-i)^{r-j}
        (k-i)^{j-1}
        F_j((k-i)/o(\tau))\\
=&
    \sum_{\tau \in T}
    \sum_{i = 1}^{k-1} 
    \sum_{j = 1}^{i}
        \left(
        \sum_{\substack{I \in \mathcal I_i \\ |I| = j}}
        \binom{\lambda_1}{I_1} \cdots \binom{\lambda_{k-1}}{I_{k-1}}
        \right)
        (k-i)^{r-1}
        (-1)^{j+1}\\
=&
    -g
    \sum_{i = 1}^{k-1} 
        \left(
        \sum_{I \in \mathcal I_i}
        (-1)^{|I|}
        \binom{\lambda_1}{I_1} \cdots \binom{\lambda_{k-1}}{I_{k-1}}
        \right)
        (k-i)^{r-1} = gk^{r-1} - H^*[1](\sigma).
\end{align*}
So, the number of $\sigma$-fixed non-hypersimplicial DOSPs is equal to $gk^{r-1} - H^*[1](\sigma)$. By Corollary~\ref{cor: size of Phi}, the number of $\sigma$-fixed $(k,n)$-DOSPs is $gk^{r-1}$. Therefore $H^*[1](\sigma)$ is equal to the number of $\sigma$-fixed hypersimplicial $(k,n)$-DOSPs. 
This completes the proof.
\end{proof}

\subsection{Recurrence relation}
\label{sec: recurrence relation}

In this section, we show that $H^*(\Delta_{k,n}; S_n)[1](\sigma)$ satisfies a recurrence relation similar to that for Eulerian numbers. Given $k \in \ZZ$, a tuple $\lambda = (\lambda_1, \lambda_2, \dots, \lambda_{n}) \in \ZZ^{n}_{\ge 0}$, and $r \ge 1$, we define
\[
B(k, \lambda, r) = g(k, \lambda) \sum_{h = 0}^{k-1}
\left(
\sum_{I \in \mathcal I_h}
(-1)^{|I|}
\binom{\lambda_1}{I_1}
\cdots
\binom{\lambda_{k-1}}{I_{k-1}}
\right)
(k - h)^{r-1} 
\]
where $g(k, \lambda) = \gcd(\{k\} \cup \{i \in [n] : \lambda_i \ge 1\})$.

Suppose that $\sigma \in S_n$ has cycle type $(s_1, \dots, s_r)$ and for each $i \in [n]$ we denote by $\lambda_i$ the number of cycles of $\sigma$ of length $i$. Then, by Theorem~\ref{thm: H*[1] is perm character of DOSPs under Sn}, we have $H^*(\Delta_{k,n}; S_n)[1](\sigma) = B(k, \lambda, r)$. 

\begin{proposition}\label{prop: recurrence relation}
    We have $B(k, \lambda, r) = 0$ if $k < 1$, $B(1, \lambda, r) = gk^{r-1}$, and
    $B(k, \lambda, r) = gk^{r-1}$ if $\lambda_1 = \cdots = \lambda_{k-1} = 0$.
    Suppose there exists $a \in [k-1]$ such that $\lambda_a \ge 1$. Define 
    $
    \lambda' = (\lambda_1, \dots, \lambda_{a-1}, \lambda_a - 1, \lambda_{a+1}, \dots, \lambda_n).
    $ Then, we have the recurrence relation
    \[
    B(k, \lambda, r) = \frac{g(k, \lambda)}{g(k, \lambda')} B(k, \lambda', r)
    - 
    \frac{g(k, \lambda)}{g(k-a,\lambda')} B(k-a, \lambda', r).
    \]
\end{proposition}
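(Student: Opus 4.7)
The plan is first to dispatch the base cases and then to reduce the recurrence to a clean binomial identity. If $k<1$ the outer sum defining $B(k,\lambda,r)$ is empty, so the value is $0$. If $k=1$, or more generally if $\lambda_1=\cdots=\lambda_{k-1}=0$, the only surviving contribution to the sum is $h=0$, $I=(0,\dots,0)$, yielding $B(k,\lambda,r) = g(k,\lambda) k^{r-1}$. For the recurrence, I would introduce $S(k,\lambda,r) := B(k,\lambda,r)/g(k,\lambda)$; after clearing the common factor $g(k,\lambda)$ on both sides, the claimed identity is equivalent to the cleaner statement
\[
S(k,\lambda,r) = S(k,\lambda',r) - S(k-a,\lambda',r).
\]

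The next step is to apply Pascal's rule
\[
\binom{\lambda_a}{I_a} = \binom{\lambda'_a}{I_a} + \binom{\lambda'_a}{I_a - 1}
\]
to the $a$-th binomial factor in the summand defining $S(k,\lambda,r)$, splitting the sum into two pieces. The contribution coming from $\binom{\lambda'_a}{I_a}$ reassembles directly to $S(k,\lambda',r)$. For the contribution coming from $\binom{\lambda'_a}{I_a-1}$, I would re-index by $J := (I_1,\dots,I_a-1,\dots,I_{k-1})$ and $h' := h-a$. Terms with $I_a=0$ drop out because $\binom{\lambda'_a}{-1}=0$, and $|J|=|I|-1$ produces the overall minus sign. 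The summand becomes
\[
(-1)^{|J|}\binom{\lambda'_1}{J_1}\cdots\binom{\lambda'_{k-1}}{J_{k-1}}\bigl((k-a)-h'\bigr)^{r-1},
\]
with $h'$ ranging over $\{0,1,\dots,(k-a)-1\}$ and $\sum_i i J_i = h'$.

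The main technical step, which I expect to be the only real obstacle, is identifying this re-indexed sum with $S(k-a,\lambda',r)$. The key observation is that for each allowed $h' \le (k-a)-1 < k-a$, every tuple $J$ of length $k-1$ with $\sum_i i J_i = h'$ must satisfy $J_i = 0$ whenever $i \ge k-a$, since otherwise $iJ_i \ge k-a > h'$. Consequently, the truncation $J \mapsto (J_1,\dots,J_{k-a-1})$ is a bijection onto the index set $\mathcal I_{h'}$ used in the definition of $S(k-a,\lambda',r)$, and the dropped binomial factors $\binom{\lambda'_i}{0}$ for $i \ge k-a$ are all $1$. Hence the second piece equals $-S(k-a,\lambda',r)$, and the recurrence follows.
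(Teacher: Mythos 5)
Your proposal is correct and follows essentially the same route as the paper: Pascal's identity applied to the $a$-th binomial factor, splitting the sum into a piece that reassembles to the $(k,\lambda')$ term and a re-indexed piece (shifting $h\mapsto h-a$, $I_a\mapsto I_a-1$) that yields the $(k-a,\lambda')$ term with the sign coming from $|I|=|J|+1$. Your normalisation by $g(k,\lambda)$ is a cosmetic variant, and your explicit justification that the length-$(k-1)$ index tuples truncate to length-$(k-a-1)$ tuples is a point the paper's proof leaves implicit; it is handled correctly here.
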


\begin{proof}
The first part of the result follows easily from the definition of $B(k, \lambda, r)$. For the recurrence relation, fix $a \in [k-1]$ such that $\lambda_a \ge 1$. To simplify notation, we write $g = g(k, \lambda)$, $g' = g(k, \lambda')$, and $g'' = g(k-a, \lambda')$. First, we apply Pascal's identity
\begin{align*}
B(k, \lambda, r) &= 
g\sum_{h = 0}^{k-1}
\left(
\sum_{I \in \mathcal I_h}
(-1)^{|I|}
\binom{\lambda_1}{I_1}
\cdots
\binom{\lambda_{k-1}}{I_{k-1}}
\right)
(k - h)^{r-1} \\
&=
g\sum_{h = 0}^{k-1}
\left(
\sum_{I \in \mathcal I_h}
(-1)^{|I|}
\binom{\lambda_1}{I_1}\cdots
\binom{\lambda_a - 1}{I_a} \cdots
\binom{\lambda_{k-1}}{I_{k-1}}
\right)
(k - h)^{r-1} \\
& \quad
+g\sum_{h = 0}^{k-1}
\left(
\sum_{I \in \mathcal I_h}
(-1)^{|I|}
\binom{\lambda_1}{I_1}\cdots
\binom{\lambda_a - 1}{I_a - 1} \cdots
\binom{\lambda_{k-1}}{I_{k-1}}
\right)
(k - h)^{r-1}.
\end{align*}

The first sum coincides with $(g/g') B(k, \lambda', r)$. For the second sum, we re-index as follows
\begin{align*}
B(k, \lambda, r) - \frac{g}{g'} B(k, \lambda', r) &= 
g\sum_{h = 0}^{k-1}
\left(
\sum_{I \in \mathcal I_h}
(-1)^{|I|}
\binom{\lambda_1}{I_1}\cdots
\binom{\lambda_a - 1}{I_a - 1} \cdots
\binom{\lambda_{k-1}}{I_{k-1}}
\right)
(k - h)^{r-1} \\
&=
g\sum_{h = 0}^{k-1-a}
\left(
\sum_{I \in \mathcal I_{h+a}}
(-1)^{|I|}
\binom{\lambda'_1}{I_1}\cdots
\binom{\lambda'_a}{I_a - 1} \cdots
\binom{\lambda'_{k-1}}{I_{k-1}}
\right)
(k - h - a)^{r-1} \\
&= 
g\sum_{h = 0}^{(k-a)-1}
\left(
\sum_{I \in \mathcal I_{h}}
(-1)^{|I|+1}
\binom{\lambda'_1}{I_1}\cdots
\binom{\lambda'_{k-1}}{I_{k-1}}
\right)
((k - a) - h)^{r-1} \\
&= - \frac{g}{g''} B(k-a, \lambda', r).
\end{align*}
So we have shown that $B$ satisfies the recursive relation, which concludes the proof.
\end{proof}

\begin{remark}
    The evaluation of $H^*(\Delta_{k,n}; S_n)[1]$ at the identity is equal to Eulerian number $A(n-1, k-1)$, which is equal to $B(k, (n,0, \dots, 0), n)$. The standard recurrence relation for  Eulerian numbers is
    \[
    A(n,k) = (k+1)A(n-1,k) + (n-k)A(n-1,k-1).
    \]
    The equation that we obtain, using Proposition~\ref{prop: recurrence relation}, by starting with an Eulerian number is
    \[
    B(k,(n,0, \dots, 0), n) = B(k, (n-1,0, \dots, 0), n) - B(k-1, (n-1, 0, \dots, 0), n).
    \]
    This relation differs a little from the standard recurrence as the terms appearing on the right-hand side are, in general, not Eulerian numbers.
\end{remark}

\section{The second hypersimplex}\label{sec: k=2}

In this section, we give a complete description of the coefficients of the $H^*$-polynomial for the second hypersimplex $\Delta_{2,n}$. Throughout this section, we denote $H^* = H^*(\Delta_{2,n}, S_n)$. We interpret the coefficients $H^*_i$ of $H^*$ in terms of DOSPs, see Definition~\ref{def: dosp}, as well as actions of $S_n$ on subsets and partitions of $[n]$.

\medskip
\noindent \textbf{Notation.} For each $m \in [n]$, we denote by $\rho_m$ the character of the permutation representation of $S_n$ acting on $\binom{[n]}{m}$. So, we have $\rho_m(\sigma) = |\{S \subseteq [n] : |S| = m,\, \sigma(S) = S\}|$ for each $\sigma \in S_n$. By taking complements, we have $\rho_{n-m} = \rho_m$ for each $m$. For each $\sigma \in S_n$ and $i \in [n]$, we write $\lambda_i$ for the number of cycles of $\sigma$ of length $i$. So, we have $\rho_{n-1} = \rho_1 = \Chi_{\rm nat} = \lambda_1$ is the character of the \emph{natural representation} and $\rho_n = \Chi_0$ is the trivial character. We define $\tau_m$ to be the character of the permutation representation of $S_n$ acting on the set of partitions of $[n]$ into two parts: one of size $m$ and the other of size $n-m$. Note that, unless $n$ is even and $m = n/2$, we have that $\rho_m = \tau_m$.

\begin{theorem}\label{thm: k=2 H* coeff}
    Let $n > 2$. The coefficients of the $H^*$-polynomial $H^*(\Delta_{2,n}; S_n)$ are:
    \begin{itemize}
        \item $H^*_0 = \Chi_0$ the trivial character,
        \item $H^*_1 = \rho_2 - \rho_1$,
        \item $H^*_m = \rho_{2m}$ for each $2 \le m \le \lfloor n/2 \rfloor$.
    \end{itemize}
    The evaluation of $H^*$ at one is given by $H^*[1] = \Chi_0 + \tau_2 + \tau_3 + \dots + \tau_{\lfloor n/2 \rfloor}$, which is a permutation character.
    If $n$ is odd, then the leading coefficient is $H^*_{(n-1)/2} = \rho_1$. Otherwise, if $n$ is even, then the leading coefficient is $H^*_{n/2} = \Chi_0$.
\end{theorem}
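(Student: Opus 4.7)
The plan is to apply Theorem~\ref{thm: H* coefficient in terms of representations} with $k = 2$ and then specialise. For $k = 2$, the set $\mathcal{I}_h \subseteq \ZZ_{\ge 0}$ is a singleton in each relevant case: $\mathcal{I}_0 = \{(0)\}$ and $\mathcal{I}_1 = \{(1)\}$, so the sum over $h$ collapses to two terms and the formula reduces to
\[
H^*_m(\sigma) = |\Phi_2(\sigma, 2m)| - \lambda_1(\sigma) \cdot |\Phi_1(\sigma, m-1)|
\]
for every $\sigma \in S_n$. Since $\Phi_1(\sigma, j)$ consists of functions $[r] \to \{0\}$ with $\sum f(i) s_i = j$, it is empty unless $j = 0$, in which case it has exactly one element. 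Hence the second term vanishes except when $m = 1$, where it contributes $-\lambda_1(\sigma)$; observing that $\lambda_1(\sigma)$ counts the fixed points of $\sigma$, this equals $\rho_1(\sigma)$. For the first term, Proposition~\ref{prop: Phi set is perm rep} identifies $|\Phi_2(\sigma, 2m)|$ with the value at $\sigma$ of the permutation character of $S_n$ acting on $\{f \colon [n] \to \{0, 1\} : \sum f(i) = 2m\}$; such functions are indicator functions of $2m$-subsets, so this character is exactly $\rho_{2m}$.

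Assembling the pieces gives immediately $H^*_0 = \rho_0 = \Chi_0$, $H^*_1 = \rho_2 - \rho_1$, and $H^*_m = \rho_{2m}$ for $m \ge 2$. Termination of the polynomial at $m = \lfloor n/2 \rfloor$ is Corollary~\ref{cor: degree of H* poly} (degree $\lfloor (k-1)n/k\rfloor = \lfloor n/2\rfloor$), and the leading coefficient identification then follows from the complementation identity $\rho_j = \rho_{n-j}$: for $n$ even, $H^*_{n/2} = \rho_n = \Chi_0$ (the unique $n$-subset is fixed by all of $S_n$), and for $n$ odd, $H^*_{(n-1)/2} = \rho_{n-1} = \rho_1$.

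For the evaluation at $t = 1$, I will invoke Theorem~\ref{thm: H*[1] is perm character of DOSPs under Sn}, which identifies $H^*[1]$ with the permutation character of $S_n$ acting on hypersimplicial $(2,n)$-DOSPs, and then enumerate these DOSPs explicitly. Since $\ell_1 + \cdots + \ell_r = 2$ with each $\ell_i \ge 1$, the number of blocks $r$ is either $1$ or $2$. When $r = 1$, there is a single DOSP $(([n], 2))$, which is hypersimplicial for $n > 2$ and fixed by all of $S_n$, contributing $\Chi_0$ to the character. When $r = 2$, the pairs $(L_i, 1)$ are unlabelled due to cyclic equivalence, so these DOSPs biject $S_n$-equivariantly with unordered set partitions $\{A, B\}$ of $[n]$ into two blocks; the hypersimpliciality condition $|L_i| > \ell_i = 1$ forces $|A|, |B| \ge 2$. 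Decomposing this set of partitions by block sizes gives the contribution $\tau_2 + \tau_3 + \cdots + \tau_{\lfloor n/2 \rfloor}$, and adding the trivial character from the $r = 1$ DOSP yields the claimed formula.

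There are no serious obstacles in this argument: the $k = 2$ specialisation is essentially transparent, and the main combinatorial point is recognising that $(2,n)$-DOSPs with $r = 2$ are exactly unordered two-block set partitions. The only minor subtlety is being careful with the case $m = 1$, where both terms in the Theorem~\ref{thm: H* coefficient in terms of representations} formula contribute and cause the single non-permutation coefficient to appear.
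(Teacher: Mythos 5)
Your derivation of the coefficients $H^*_m$ is correct and is essentially the paper's own argument: specialise Theorem~\ref{thm: H* coefficient in terms of representations} to $k=2$, note $\mathcal I_0=\{(0)\}$ and $\mathcal I_1=\{(1)\}$, kill the $h=1$ term except at $m=1$ via the observation that $\Phi_1(\sigma,j)$ is nonempty only for $j=0$, and identify $|\Phi_2(\sigma,2m)|$ with $\rho_{2m}(\sigma)$ through indicator functions of $\sigma$-fixed $2m$-subsets. Where you genuinely diverge is the evaluation $H^*[1]=\Chi_0+\tau_2+\cdots+\tau_{\lfloor n/2\rfloor}$. The paper does \emph{not} invoke Theorem~\ref{thm: H*[1] is perm character of DOSPs under Sn} here; it sums the coefficients to get $\sum_{m}\rho_{2m}-\rho_1$ and then converts $\sum\rho_{2m}$ into $\sum\tau_m$ directly -- trivially when $n$ is odd, and via the separate inductive counting argument of Lemma~\ref{lem: equal reps for n even} when $n$ is even. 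Your route instead imports Theorem~\ref{thm: H*[1] is perm character of DOSPs under Sn} (proved independently in Section~\ref{sec: DOSP thm}, so there is no circularity) and enumerates the hypersimplicial $(2,n)$-DOSPs by hand: the single one-block DOSP $(([n],2))$ giving $\Chi_0$, and the two-block DOSPs, which by cyclic equivalence are exactly unordered partitions $\{A,B\}$ with $|A|,|B|\ge 2$, giving $\tau_2+\cdots+\tau_{\lfloor n/2\rfloor}$. Both arguments are sound. Yours is shorter but leans on the heaviest theorem in the paper; the paper's is self-contained within Section~\ref{sec: k=2} and, read against Theorem~\ref{thm: H*[1] is perm character of DOSPs under Sn}, the two computations together recover the identity of Lemma~\ref{lem: equal reps for n even} as a consistency check rather than assuming it.
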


Before we prove the theorem, we consider the formula for the coefficients $H^*_m$ in Theorem~\ref{thm: H* coefficient in terms of representations}. We note that the final term, indexed by $h = k-1$, has a simple description.

\begin{proposition} \label{cor: subset for ell-h=1}
    Let $\sigma \in S_n$ and $m \ge 0$. We have
    \[
    |\Phi_1(\sigma, m)| = \begin{cases}
        1 & \text{if } m = 0, \\
        0 & \text{otherwise}.
    \end{cases}
    \]
\end{proposition}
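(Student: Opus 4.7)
The proof is essentially immediate from unpacking Definition~\ref{def: Phi set and I_h set}. The plan is to observe that when $k=1$, the codomain $\{0,1,\dots,k-1\}$ of the functions in $\Phi_1(\sigma,m)$ collapses to the singleton $\{0\}$, so there is a unique function $f\colon [r]\to\{0\}$, namely the zero function. For this function we have $\sum_{i=1}^r f(i)s_i = 0$, so $f$ belongs to $\Phi_1(\sigma,m)$ precisely when $m=0$.

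Concretely, I would write one or two sentences: if $m=0$, then $\Phi_1(\sigma,0)$ contains the zero function and nothing else, giving cardinality $1$; if $m\neq 0$, then $\Phi_1(\sigma,m)$ is empty since the only candidate function has $\sum f(i)s_i = 0 \neq m$. The case $m<0$ is covered by the convention $\Phi_k(\sigma,m)=\emptyset$ built into the definition.

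There is no real obstacle here — the statement is a direct consequence of the definition, with no combinatorial identity or induction required. The main ``work'' is simply noting that $\{0,1,\dots,k-1\}=\{0\}$ when $k=1$ and that the constraint $\sum f(i)s_i=m$ then forces $m=0$.
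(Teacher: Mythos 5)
Your proposal is correct and is essentially identical to the paper's proof: both simply note that for $k=1$ the codomain is $\{0\}$, so the only candidate is the zero function, which satisfies $\sum_i f(i)s_i = 0$ and hence lies in $\Phi_1(\sigma,m)$ exactly when $m=0$. Nothing is missing.
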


\begin{proof}
    The set $\Phi_1(\sigma, m)$ consists of functions $f \colon [r] \rightarrow \{0\}$ such that $\sum_{i = 1}^r f(i) s_i = m$. There is only one such function, which belongs to the set $\Phi_1(\sigma, 0)$.
\end{proof}

To prove Theorem~\ref{thm: k=2 H* coeff}, we require the following result about $S_n$-representations.

\begin{lemma}\label{lem: equal reps for n even}
    Let $n$ be even. The following equation of $S_n$-representations holds
    \[
    \sum_{m = 0}^{n/2}
    \rho_{2m}
    = \sum_{m = 0}^{n/2} \tau_m.
    \]
\end{lemma}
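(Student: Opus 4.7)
The plan is to verify the asserted character identity pointwise on each conjugacy class. Fix $\sigma\in S_n$ with cycle type $(s_1,\ldots,s_r)$, and introduce the generating polynomial
\[
f_\sigma(t) = \prod_{i=1}^r (1 + t^{s_i}).
\]
Since a $\sigma$-fixed $m$-subset of $[n]$ is precisely a union of cycle sets of total size $m$, we have $\rho_m(\sigma) = [t^m] f_\sigma(t)$. Extracting even-degree coefficients, the left-hand side becomes
\[
\sum_{m=0}^{n/2} \rho_{2m}(\sigma) = \tfrac{1}{2}\bigl(f_\sigma(1) + f_\sigma(-1)\bigr).
\]

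For the right-hand side, observe that $\tau_m = \rho_m$ whenever $m < n/2$, since the two parts of a size-$m$/size-$(n-m)$ partition then have distinguishable sizes. Only the term $\tau_{n/2}$ requires extra care: a partition $\{A, A^c\}$ with $|A| = n/2$ is $\sigma$-fixed iff $\sigma(A) \in \{A, A^c\}$, so a Burnside-style count gives
\[
\tau_{n/2}(\sigma) = \tfrac{1}{2}\bigl(\rho_{n/2}(\sigma) + B(\sigma)\bigr), \qquad B(\sigma) := \bigl|\{A \in \tbinom{[n]}{n/2} : \sigma(A) = A^c\}\bigr|.
\]
Using the complement symmetry $\rho_m = \rho_{n-m}$, one has $\sum_{m=0}^{n/2-1} \rho_m(\sigma) = \tfrac{1}{2}(f_\sigma(1) - \rho_{n/2}(\sigma))$, so the right-hand side telescopes to $\tfrac{1}{2}(f_\sigma(1) + B(\sigma))$.

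The final step is to identify $B(\sigma)$ with $f_\sigma(-1)$. If $\sigma(A) = A^c$, then restricting to any cycle $C_i$ forces $\sigma$ to send $A \cap C_i$ to its complement in $C_i$; since $\sigma$ acts as a cyclic shift on $C_i$ of length $s_i$, this is possible only when $s_i$ is even, and then $A \cap C_i$ must be one of the two alternating subsets. Hence $B(\sigma) = 2^r$ if every $s_i$ is even and $0$ otherwise, which exactly matches $f_\sigma(-1) = \prod_{i=1}^r(1 + (-1)^{s_i})$. Substituting yields $\mathrm{RHS}(\sigma) = \tfrac{1}{2}(f_\sigma(1) + f_\sigma(-1)) = \mathrm{LHS}(\sigma)$. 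The main obstacle is the $m = n/2$ term, where the two-to-one map from $n/2$-subsets onto complementary pairs introduces the correction $B(\sigma)$; the key insight is that this correction is precisely the "missing" evaluation $f_\sigma(-1)$ which, together with $f_\sigma(1)$, projects onto the even coefficients on the left-hand side.
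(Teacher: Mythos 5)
Your proof is correct, and it takes a genuinely different route from the paper's. The paper argues at the level of sets: it compares the $\sigma$-fixed even-size subsets $L^\sigma$ directly with the $\sigma$-fixed two-part partitions $R^\sigma$, splitting into the case where every cycle of $\sigma$ has odd length (where both counts are shown to be $2^{r-1}$) and the case where some cycle has even length (handled by induction on the number of cycles, peeling off one even cycle at a time). You instead verify the character identity pointwise via the generating polynomial $f_\sigma(t)=\prod_{i=1}^r(1+t^{s_i})$, extract the even coefficients as $\tfrac12(f_\sigma(1)+f_\sigma(-1))$, reduce the right-hand side to $\tfrac12(f_\sigma(1)+B(\sigma))$ using $\rho_m=\rho_{n-m}$ together with the two-to-one count for the middle term $\tau_{n/2}$, and then observe that the correction $B(\sigma)=|\{A:\sigma(A)=A^c\}|$ equals $f_\sigma(-1)$ because membership in $A$ must alternate along each cycle. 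All the individual steps check out: the orbit count $\tau_{n/2}(\sigma)=\tfrac12(\rho_{n/2}(\sigma)+B(\sigma))$ is valid since $\sigma(A)=A$ and $\sigma(A)=A^c$ cannot hold simultaneously, and the alternation argument correctly yields $B(\sigma)=2^r$ when all $s_i$ are even and $0$ otherwise, matching $\prod_i(1+(-1)^{s_i})$. Your argument is more uniform --- it avoids both the case split and the induction --- and it isolates the structural reason the identity holds (the factor $1+(-1)^{s_i}$); what the paper's version buys in exchange is an explicit set-level matching that makes the underlying combinatorics visible.
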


\begin{proof}
    Fix a permutation $\sigma$ with cycle type $(s_1, s_2, \dots, s_r)$. It suffices to show that the number of subsets of $[n]$ with even size that are fixed by $\sigma$ is equal to the number of two-part partitions of $[n]$ that are fixed by $\sigma$. Suppose that $A \sqcup B = [n]$ is a two-part partition, then we write $AB := \{A, B\}$ for the partition of $[n]$ into $A$ and $B$. Given a partition $AB$ of $[n]$, we write $\sigma(AB)$ for the partition of $[n]$ with parts $\sigma(A)$ and $\sigma(B)$. We define the sets
    \[L^\sigma = \{A \subseteq [n] : |A| \text{ is even},\, \sigma(A) = A \}
    \text{ and }
    R^\sigma = \{AB : A \sqcup B = [n],\, \sigma(AB) = AB \}.\]
    We will show that $|L^\sigma| = |R^\sigma|$. 
    
    We first consider the special case where each cycle of $\sigma$ has odd length. Each element $A \in L^\sigma$ is the union of the supports of cycles of $\sigma$. Since $|A|$ is even and each cycle has odd length, it follow that $A$ is the union of an even number of cycle supports. The indices of the cycles whose supports form $A$ uniquely determine $A$, and any even subset of cycles forms a unique set $A$. So we have
    \begin{align*}
        |L^\sigma| &= |\{S \subseteq [r] : \Sigma S \text{ is even} \}| \\
        &= |\{S \subseteq [r] : |S| \text{ is even} \}|\\
        &= 2^{r-1}.
    \end{align*}
    On the other hand, for each subset $S \subset [r]$, we obtain a partition $TU$ where $T$ is the union of supports of the cycles in $\sigma$ indexed by $S$ and $U = [n] \setminus T$ is the complement of $T$. Observe that every $\sigma$-invariant partition arises in this way because each cycle has odd length. Moreover, each partition arises from a subset $S \subseteq [k]$ or its complement $[k] \setminus S$. So we conclude that $|R^\sigma| = 2^{k-1} = |L^\sigma|$. This concludes the special case.

    Suppose that $\sigma$ contains a cycle of even length. Without loss of generality we assume that $s := s_r$ is even. We prove $|L^\sigma| = |R^\sigma|$ by induction on $r$. For the base case with $r = 1$, we have that $\sigma = (\sigma_1 \ \sigma_2 \ \dots \ \sigma_n)$ is an $n$-cycle where $n$ is even. It is easy to see that
    \[
        L^\sigma = \{\emptyset,\, [n]\} 
        \quad \text{and} \quad
        R^\sigma = \{\{\emptyset,[n]\},\, \{\sigma_1\sigma_3\dots\sigma_{n-1}, \sigma_2\sigma_4\dots\sigma_n\} \}.
    \]
    So we have $|L^\sigma| = 2 = |R^\sigma|$.

    For the induction step, assume that $r > 1$ and consider a permutation $\tau$ that has cycle type $s_1,s_2, \dots, s_{r-1}$. Without loss of generality, let us assume that $\tau$ is equal to the permutation $\sigma$ restricted to $[n-s_r]$. Define the set $S = [n] \setminus [n-s_r]$. It is easy to see that
    \[
        L^\sigma = L^\tau \sqcup \{A \cup S : A \in L^\tau\}
    \]
    and so we have $|L^\sigma| = 2|L^\tau|$. On the other hand, let us consider a partition $AB \in R^\tau$. If $\sigma(A) = A$, then we have that the partitions
    $(A \cup S)B$ and $A(B \cup S)$ lie in $R^\sigma$. On the other hand, if $\sigma(A) = B$, then write $(c_1, c_2, \dots, c_{s_r})$ for the cycle of $\sigma$ supported on $S$. Then we have
    \[
        (A \cup \{c_1, c_3, \dots, c_{s_k-1} \})(B \cup \{c_2, c_4, \dots, c_{s_k} \})
        \text{ and }
        (A \cup \{c_2, c_4, \dots, c_{s_k} \})(B \cup \{c_1, c_3, \dots, c_{s_k-1} \})
    \]
    are elements of $R^\sigma$. Every element of $R^\sigma$ arises uniquely in one of the ways described above. So it follows that $|R^\sigma| = 2|R^\tau|$. By induction, we have $|L^\tau| = |R^\tau|$ and so we deduce that $|L^\sigma| = |R^\sigma|$ and we are done.
\end{proof}

\begin{proof}[Proof of Theorem~\ref{thm: k=2 H* coeff}]
Fix $\sigma \in S_n$ with cycle type $(s_1, \dots, s_r)$ and denote by $C_1, \dots, C_r$ the cycle sets of $\sigma$ such that $|C_i| = s_i$ for each $i \in [r]$. Let us consider the coefficients given by Theorem~\ref{thm: H* coefficient in terms of representations} and Proposition~\ref{cor: subset for ell-h=1} for the second hypersimplex $\Delta_{2,n}$. We have
    \[
    H^*_m(\sigma) = |\Phi_2(\sigma, 2m)| - \lambda_1 |\Phi_1(\sigma, m-1)| = \begin{cases}
        |\Phi_2(\sigma, 2m)| & \text{if } m \neq 1, \\
        |\Phi_2(\sigma, 2m)| - \lambda_1 & \text{if } m = 1.
    \end{cases}
    \]
    The value $\lambda_1$ is equal to the number of fixed points of $\sigma$. So $\lambda_1(\sigma) = \rho_1 (\sigma)$ is the character of the natural representation of $S_n$. On the other hand, the set $\Phi_2(\sigma, 2m)$ contains all functions $f \colon [r] \rightarrow \{0,1\}$ such that $\sum_{i = 1}^r f(i) s_i = 2m$. There is a natural correspondence between $f \in \Phi_2(\sigma, 2m)$ and subsets $F \subseteq [n]$ with $|F| = 2m$ and $\sigma(F) = F$, which is given by
    \[
        f \mapsto F = \bigcup_{i \in f^{-1}(1)} C_i.
    \]
    So $|\Phi_2(\sigma, 2m)| = \rho_{2m}(\sigma)$ is equal to the permutation character of $S_n$ acting on $\binom{[n]}{2m}$. This proves that $H^*_0 = \Chi_0$, $H^*_1 = \rho_2 - \rho_1$, and $H^*_m = \rho_{2m}$ for each $2 \le m \le \lfloor n/2 \rfloor$. By Corollary~\ref{cor: degree of H* poly}, if $n$ is odd then the leading coefficient is $H^*_{(n-1)/2} = \rho_{n-1} = \rho_1$, otherwise if $n$ is even then the leading coefficient is $H^*_{n/2} = \rho_{n} = \Chi_0$.

    It remains to show that $H^*[1] = \Chi_0 + \tau_2 + \dots + \tau_{\lfloor n/2 \rfloor}$. If $n$ is odd, then the result follows from the above and the fact that $\rho_m = \tau_m$ for each $m \in [n]$. On the other hand, if $n$ is even, then result follows from Lemma~\ref{lem: equal reps for n even}.
\end{proof}

Theorem~\ref{thm: k=2 H* coeff} allows us to give a complete combinatorial proof of effectiveness for the $H^*$-polynomial.

\begin{corollary}\label{cor: k=2 H* is effective}
    Fix $n$. Each coefficient of $H^*(\Delta_{2,n}, S_n)$ is an effective representation.
\end{corollary}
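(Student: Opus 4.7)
The plan is to invoke Theorem~\ref{thm: k=2 H* coeff}, which gives $H^*_0 = \Chi_0$, $H^*_m = \rho_{2m}$ for $2 \leq m \leq \lfloor n/2 \rfloor$, and $H^*_1 = \rho_2 - \rho_1$. All coefficients except possibly $H^*_1$ are permutation characters, hence effective by construction, so the only task is to show that $\rho_2 - \rho_1$ is effective, i.e.\ that $\rho_1$ embeds as a subrepresentation of $\rho_2$.

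The key step is to exhibit an explicit $S_n$-equivariant linear injection $\varphi \colon \CC[n] \hookrightarrow \CC\binom{[n]}{2}$. On basis vectors, I would define
\[
\varphi(e_i) = \sum_{j \in [n] \setminus \{i\}} e_{\{i,j\}}.
\]
Equivariance is a direct check: for $\sigma \in S_n$, one has $\sigma \cdot \varphi(e_i) = \sum_{j \neq i} e_{\{\sigma(i),\sigma(j)\}} = \sum_{k \neq \sigma(i)} e_{\{\sigma(i),k\}} = \varphi(e_{\sigma(i)})$. Injectivity follows from a short linear-algebra argument: if $\sum_i c_i \varphi(e_i) = 0$, then reading off the coefficient of $e_{\{a,b\}}$ gives $c_a + c_b = 0$ for every pair $a \neq b$; since $n > 2$, taking three distinct indices $a,b,c$ yields $c_a = -c_b = c_c$ and $c_a = -c_c$, forcing all $c_i = 0$.

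Since $\CC S_n$-modules are semisimple, the existence of this equivariant injection implies $\rho_2 \cong \rho_1 \oplus W$ for some genuine representation $W$ (in fact $W$ is the irreducible $V_{(n-2,2)}$, though this identification is not needed). Hence $H^*_1 = \rho_2 - \rho_1 = W$ is effective, completing the verification that every coefficient of $H^*(\Delta_{2,n}; S_n)$ is effective. The only mild subtlety, and the thing I would check carefully, is that the injectivity argument genuinely requires $n > 2$; the hypothesis $n > 2$ in Theorem~\ref{thm: k=2 H* coeff} already excludes the degenerate case.
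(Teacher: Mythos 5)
Your proposal is correct and follows essentially the same route as the paper: citing Theorem~\ref{thm: k=2 H* coeff}, noting that all coefficients except $H^*_1$ are permutation characters, and exhibiting the identical $S_n$-equivariant injection $e_i \mapsto \sum_{j \neq i} f_{ij}$ to show $\rho_2 - \rho_1$ is effective. Your explicit injectivity argument merely fills in a step the paper declares ``straightforward,'' so there is nothing further to add.
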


\begin{proof}
    Let $m \in \{0,1,2, \dots, \lfloor n/2 \rfloor\}$ and consider the $t^m$ coefficient $H^*_m$ of $H^*(\Delta_{2,n}, S_n)$. If $m \neq 1$, then $H^*_m$ is a permutation character, hence it is effective. Otherwise, if $m = 1$ then let $V$ and $W$ be $\CC S_n$-modules with characters $\rho_1$ and $\rho_2$ respectively. Explicitly, we assume $V$ has basis $e_i$ with $i \in [n]$ and action $\sigma(e_i) = e_{\sigma(i)}$ and $W$ has basis $f_I$ with $I \in \binom{[n]}{2}$ and action $\sigma(f_I) = f_{\sigma(I)}$. Define the map
    $
    \varphi \colon V \rightarrow W$ given by 
    $
    \varphi (e_i) = \sum_{j \neq i} f_{ij}
    $. It is straightforward to show that $\varphi$ is an injective $\CC S_n$-module homomorphism, hence $H^*_1 = \rho_2 - \rho_1$ is effective.
\end{proof}

So, the above corollary shows that the coefficient $H^*_1$ is a representation. However, as we will now show, $H^*_1$ is special in that the trivial character does not appear as a direct summand.

\begin{corollary}\label{cor: k=2 H*_m is perm rep iff m not 1}
    Fix $n \ge 4$ and let $0 \le m \le \lfloor n/2 \rfloor$. Then the coefficient $H^*_m$ of the equivariant $H^*$-polynomial of $\Delta_{2,n}$ is a permutation character if and only if $m \neq 1$. Moreover, the trivial character does not appear in $H^*_1$.
\end{corollary}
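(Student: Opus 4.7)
The plan is to leverage Theorem~\ref{thm: k=2 H* coeff}, which already identifies each coefficient $H^*_m$ explicitly: $H^*_0 = \Chi_0$, $H^*_m = \rho_{2m}$ for $2 \le m \le \lfloor n/2 \rfloor$, and $H^*_1 = \rho_2 - \rho_1$. For $m \neq 1$, the coefficient is manifestly a permutation character (either trivial, or the permutation character on $\binom{[n]}{2m}$), so there is nothing to prove in this direction.

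The substantive content is the case $m = 1$. The key observation I will use is that any non-zero permutation character $\Chi$ of $S_n$ contains the trivial representation with strictly positive multiplicity, because by Burnside's lemma this multiplicity equals the number of $S_n$-orbits on the underlying set, which is at least one whenever the set is non-empty. So it suffices to show that $H^*_1$ is non-zero and that the trivial representation does not occur as a summand of $H^*_1$.

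For non-vanishing, I would evaluate at the identity: $H^*_1(\mathrm{id}) = \rho_2(\mathrm{id}) - \rho_1(\mathrm{id}) = \binom{n}{2} - n = n(n-3)/2$, which is strictly positive for $n \ge 4$. For the trivial multiplicity, I would compute the standard character inner product $\langle H^*_1, \Chi_0 \rangle = \langle \rho_2, \Chi_0 \rangle - \langle \rho_1, \Chi_0 \rangle$. Since $S_n$ acts transitively on both $\binom{[n]}{2}$ and $[n]$, Burnside's lemma gives $\langle \rho_2, \Chi_0 \rangle = \langle \rho_1, \Chi_0 \rangle = 1$, hence $\langle H^*_1, \Chi_0 \rangle = 0$. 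Combining these two facts, $H^*_1$ is a non-zero virtual character (in fact a genuine representation by Corollary~\ref{cor: k=2 H* is effective}) whose trivial multiplicity is zero, so it cannot be a permutation character.

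There is no real obstacle here — the proof is essentially a three-line application of Burnside's lemma after citing Theorem~\ref{thm: k=2 H* coeff}. The only minor subtlety is the hypothesis $n \ge 4$, which is needed to ensure $H^*_1 \ne 0$ (for $n = 3$ one has $\binom{3}{2} - 3 = 0$, and indeed $\rho_2 = \rho_1$ for $S_3$ acting on $\binom{[3]}{2}$ via complements, so the statement genuinely requires $n \ge 4$).
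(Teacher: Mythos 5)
Your proposal is correct and follows essentially the same route as the paper's proof: cite Theorem~\ref{thm: k=2 H* coeff} for $m \neq 1$, then for $m = 1$ use orbit counting (Burnside/Orbit--Stabiliser) to show $\langle \Chi_0, \rho_2 - \rho_1\rangle = 0$ while $H^*_1(\mathrm{id}) = \binom{n}{2} - n > 0$ for $n \ge 4$, so the non-zero class function $H^*_1$ cannot be a permutation character. Your remark about why $n \ge 4$ is genuinely needed matches the role that hypothesis plays in the paper's argument as well.
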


\begin{proof}
    Suppose $m \neq 1$. By Theorem~\ref{thm: k=2 H* coeff}, we have that $H^*_m$ is the permutation character $\rho_{2m}$ if $m > 1$ and $\Chi_0$ if $m = 0$. Otherwise, let $m = 1$ and assume by contradiction that $H^*_1$ is a permutation character. For any permutation character $\rho$, a consequence of the Orbit-Stabiliser Theorem is that $\langle \Chi_0, \rho \rangle = \frac{1}{n!}\sum_{\sigma \in S_n} \rho(\sigma)$ is equal to the number of orbits of the action. By Theorem~\ref{thm: k=2 H* coeff}, we have $H^*_1 = \rho_2 - \rho_1$. Since $n \ge 4$, we have $H^*_1(1_{S_n}) = \binom n2 - n \neq 0$, hence $H^*_1 \neq 0$. Since $S_n$ acts transitively on $[n]$ and the $2$-subsets of $[n]$, the action associated to $H^*_1$ has $\langle \Chi_0, \rho_2 - \rho_1\rangle = \langle \Chi_0, \rho_2\rangle - \langle \Chi_0, \rho_1\rangle = 0$ orbits, a contradiction. This completes the proof.
\end{proof}

\begin{remark}
    Each coefficient of the $h^*$-polynomial of the hypersimplex has a combinatorial interpretation in terms of DOSPs. Explicitly $h^*_m$ is the number of hypersimplicial $(k,n)$-DOSPs with winding number $m$, see Definition~\ref{def: dist in dosp and winding no}. We note that the winding number is not invariant under the action of $S_n$, so the same interpretation does not apply in the most general setting. However, the winding number is invariant under the cyclic group $C_n \le S_n$. In \cite{elia2022techniques}, it is shown that the coefficient of $t^m$ in $H^*(\Delta_{k,n}; C_n)[t]$ is the number of $\sigma$-fixed hypersimplicial $(k,n)$-DOSPs with winding number $m$. In the case $k= 2$, this result can be deduced from Theorem~\ref{thm: k=2 H* coeff} as follows. If $D = ((A, 1), (B, 1))$ is a DOSP, then we define the set $J(D)$ of \emph{jumping points} to be the set of $i \in [n]$ such that $i$ and $i+1$ belong to different sets of $D$. Since $k = 2$, the winding number of $D$ is equal to half the number of jumping points. For $m = 0$ and $m \ge 2$, the restriction $\res^{S_n}_{C_n}\rho_{2m} (\sigma)$ counts the number of $\sigma$-fixed partitions $\{A, B\}$ of $[n]$ with $|A| = 2m$. For each such partition there is a unique DOSP with jumping points $A$. This DOSP is $\sigma$-fixed and has winding number $m$. Every such DOSP arises in this way and so $H^*_m$ is the number of $\sigma$-fixed DOSPs with winding number $m$. In the case $m = 1$, we have that $\res^{S_n}_{C_n} (\rho_2 - \rho_1)$ is isomorphic to the permutation representation of $C_n$ acting on the set of $2$-subsets $ij \in \binom{[n]}{2}$ such that $|i - j| > 1$. For each such $2$-subset, we obtain a $\sigma$-fixed hypersimplicial DOSP, which concludes the proof.
\end{remark}

\section{Discussion and open problems}\label{sec: discussion}
In this section, we discuss various open questions that arise from our investigation. 

\medskip \noindent \textbf{Formula for $H^*$ coefficients.} 
First, let us consider the formula for the number of $\sigma$-fixed hypersimplicial DOSPs in Theorem~\ref{thm: H*[1] is perm character of DOSPs under Sn}. Our proof is quite technical but leads to a significant amount of cancellation and simplification. So we ask whether there is a simpler or direct combinatorial proof of this result.

\medskip

Next, consider the formula for the coefficients of $H^*(\Delta_{k,n}; S_n)$ in Theorem~\ref{thm: H* coefficient in terms of representations}. By Proposition~\ref{prop: Phi set is perm rep}, the map $\sigma \mapsto |\Phi_{k-h}(\sigma, m(k-h)-h)|$ is a permutation character of $S_n$. However, for a given $h \in \{0,1, \dots, k-1\}$ and $I \in \mathcal I_h$ the map
\[
\sigma \mapsto \binom{\lambda_1}{I_1} \binom{\lambda_2}{I_2} \cdots \binom{\lambda_{k-1}}{I_{k-1}}
\]
is not generally the character of an $S_n$-representation, where $\lambda_i$ is the number of cycles of $\sigma$ of length $i$. If $k=2$ then the only non-trivial map is $\sigma \mapsto \lambda_1$, which is the character of the natural representation. In Section~\ref{sec: k=2}, we use this observation to give a simple description of the coefficients of the $H^*$-polynomial. So, for $k > 2$, we ask if the above map is the character of an $S_n$ representation. More generally we ask what are the irreducible representations appearing in $H^*_m$. 

\begin{question}
    For each $m \ge 0$, what is a multiplicity of each irreducible representation of $S_n$ in the coefficient $H^*_m$ of $H^*(\Delta_{k,n}; S_n)$?
\end{question}

\medskip
\noindent \textbf{Symmetric triangulations.}
Our next questions concern the well-known unimodular triangulation of the hypersimplex \cite{lam2007alcoved}. We have seen in Corollary~\ref{cor: k=2 H*_m is perm rep iff m not 1} that the trivial character does not appear in the coefficient of $t$ in $H^*(\Delta_{2,n}; S_n)$, for each $n \ge 4$. Therefore, these hypersimplicies provide a family of counterexamples to Conjecture~\ref{conj: stapledon positive coeff}; see Example~\ref{example: counterexample to stapledon positive coeff}. In \cite{stapledon2024invariant_triangulations}, Stapledon asks whether Conjecture~\ref{conj: stapledon positive coeff} holds if we assume that $P$ admits a $G$-invariant lattice triangulation. It would be interesting to know whether the hypersimplex supports this conjecture. In particular, we ask for which subgroups $G$ of $S_n$, the hypersimplex $\Delta_{k,n}$ admits a $G$-invariant lattice triangulation. And, in each of these cases, whether the coefficients of the $H^*$-polynomial are permutation representations.

\begin{example}\label{example: triangulation 2,4}
Consider the hypersimplex $\Delta_{2,4} \subseteq \RR^4$. We write $[I_1, I_2, I_3, I_4]$ for the convex hull of the points $\{e_{I_1}, e_{I_2}, e_{I_3}, e_{I_4}\} \subseteq \RR^4$, where $I_i$ is a $2$-subsets of $[4]$ for each $i \in [4]$. The well-known lattice triangulation of the hypersimplex is given by the simplices
\[
\nabla_1 = [12, 13, 14, 24],\quad
\nabla_2 = [23, 24, 12, 13],\quad
\nabla_3 = [34, 13, 23, 24],\quad
\nabla_4 = [14, 24, 34, 13].
\]
Let $a := (1 \ 2 \ 3 \ 4) \in S_n$ be a $4$-cycle. Observe that cyclic group $C_4$ generated by $a$ acts on the set of simplices by permutation. For instance, we have
\[
a(\nabla_1) := [a(12), a(13), a(14), a(24)] = [23, 24, 12, 13] = \nabla_2
\]
and, in general, we have $a(\nabla_i) = \nabla_{a(i)}$ for each $i \in [4]$. By Theorem~\ref{thm: hypersimplex cyclic action}, the coefficients of the $H^*$-polynomial $H^*(\Delta_{2,4}; C_4)$ are  permutation representations, hence they satisfy Conjecture~\ref{conj: stapledon positive coeff}. 

Observe that the simplices are also invariant under the transposition $b := (1 \ 3) \in S_4$. Explicitly, we have $b(\nabla_i) = \nabla_{b(i)}$ for all $i \in [4]$. So, the dihedral group $D_8 := \langle a, b \rangle \le S_4$ acts naturally on the lattice triangulation. By Theorem~\ref{thm: k=2 H* coeff}, to show that Conjecture~\ref{conj: stapledon positive coeff} holds for $H^*(\Delta_{2,4}; D_8)$, it suffices to consider the coefficient of $t$, which is given by the character $H^*_1 = \rho_2 - \rho_1$. One easily verifies this character is the permutation character of $D_8$ acting on the set $\{13, 24\}$, hence Conjecture~\ref{conj: stapledon positive coeff} holds. We also observe that the set of simplices is not invariant under the transposition $(1 \ 2) \in S_n$. Therefore, the subgroup $D_8$ is the maximal subgroup of $S_4$ that leaves the triangulation invariant.
\end{example}

Our computations suggest that the above example generalises to all hypersimplices. For each hypersimplex $\Delta_{k,n}$ with $(k,n)$ in the set
\[
\{(2,4),\, (2,5),\, \dots,\, (2,10),\, (3,6),\, (3,7),\, (3,8)\}
\]
we have verified that the symmetry group of the well-known triangulation is the dihedral group $D_{2n} \le S_n$. So, we make the following conjecture.

\begin{conjecture}\label{conj: triangulation under dihedral group}
    For all $0 < k < n$, the well-known lattice triangulation of $\Delta_{k,n}$ is invariant under the dihedral group $\langle a, b\rangle = D_{2n} \le S_n$ generated by the permutations
    \[
    a = (1 \ 2 \ \cdots \ n) 
    \quad \text{and} \quad
    b = \begin{cases}
        (1 \ n)(2 \ n-1)(3 \ n-2) \cdots ((n-1)/2 \ (n+3)/2) & \text{if } n \text{ is odd},\\
        (1 \ n)(2 \ n-1)(3 \ n-2) \cdots (n/2 \ n/2+1) & \text{if } n \text{ is even}.
    \end{cases}
    \]
    Moreover, $D_{2n}$ is the maximal subgroup of $S_n$ for which the lattice triangulation is invariant.
\end{conjecture}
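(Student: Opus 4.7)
The plan is to use the explicit description of the well-known unimodular triangulation of $\Delta_{k,n}$ due to Lam and Postnikov and to verify invariance under the two generators $a, b$ of $D_{2n}$ directly, before addressing the maximality claim by identifying a combinatorial invariant of the triangulation that detects the cyclic structure on $[n]$. Throughout, I would focus on $2 \le k \le n-2$; the cases $k = 1$ and $k = n-1$, in which $\Delta_{k,n}$ is itself a simplex with a trivial triangulation preserved by all of $S_n$, would need to be excluded from the maximality part of the conjecture.

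Recall that the simplices of the alcove (equivalently, circuit) triangulation correspond bijectively to equivalence classes, up to cyclic rotation, of circular sequences $(I_0, I_1, \dots, I_{n-1})$ with $I_j \in \binom{[n]}{k}$ in which each transition $I_j \to I_{j+1}$ is an elementary forward move $I_{j+1} = (I_j \setminus \set{p}) \cup \set{p+1}$ for some $p \in I_j$ with $p+1 \notin I_j$, indices taken cyclically modulo $n$; the corresponding simplex is $\conv\set{e_{I_0}, \dots, e_{I_{n-1}}}$. Since the cyclic shift $a$ commutes with the operation $p \mapsto p+1$ on $[n]$, applying $a$ coordinatewise to a circuit sequence produces another circuit sequence, yielding invariance under $a$. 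For the reflection $b$, one observes that $b$ reverses the cyclic orientation on $[n]$, so an elementary forward move becomes an elementary backward move; applying $b$ to each set and then reversing the order of the sequence yields $(b(I_0), b(I_{n-1}), \dots, b(I_1))$, which after relabelling is a valid forward circuit. Hence $b$ also permutes the simplices, and together $a$ and $b$ generate the required $D_{2n}$-action.

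For maximality, the strategy is to extract a combinatorial invariant of the triangulation whose $S_n$-stabilizer is exactly $D_{2n}$. A natural candidate is the $1$-skeleton of the triangulation, viewed as a graph on $\binom{[n]}{k}$. Its edges can be characterised combinatorially in terms of the cyclic order on $[n]$; for instance, in the case $k = 2$ illustrated by Example~\ref{example: triangulation 2,4}, the non-edges of the triangulation are the pairs of $2$-subsets forming a specific ``crossing'' configuration with respect to the cyclic order, and one checks directly that the $S_n$-stabilizer of this configuration is exactly $D_{2n}$. A cleaner alternative is the dual graph of the triangulation, which admits an explicit description as a rotation graph on circuit sequences, and whose $S_n$-automorphism group one can show equals $D_{2n}$.

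The main obstacle is the maximality direction in full generality. Invariance under $D_{2n}$ follows transparently from the cyclic and reflective symmetry of the circuit description, but establishing that no larger subgroup of $S_n$ works requires a uniform combinatorial argument valid for all admissible $k, n$, rather than case-by-case verification. The cleanest route will likely be to identify a single canonical invariant (such as the precise set of forbidden edges in the $1$-skeleton, or the link of a distinguished vertex) and compute its stabilizer in $S_n$ directly; reducing the problem to this step is the principal technical difficulty.
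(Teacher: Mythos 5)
First, note that the statement you are addressing is posed in the paper as a \emph{conjecture}: the authors give no proof, only a computational verification of the cases $(k,n)\in\{(2,4),\dots,(2,10),(3,6),(3,7),(3,8)\}$ together with the worked example of $\Delta_{2,4}$. So there is no proof in the paper to compare against, and a complete argument would be new. That said, the invariance half of your sketch is sound: in the circuit description, maximal simplices correspond to minimal closed walks of length $n$ in the graph on $\binom{[n]}{k}$ whose edges are the elementary moves $I\mapsto (I\setminus\{p\})\cup\{p+1\}$ (indices mod $n$, with $p\in I$, $p+1\notin I$); conjugating by $a$ sends the move labelled $p$ to the move labelled $p+1$, and conjugating by $b$ turns forward moves into backward moves, which is repaired by reversing the circular order of the walk. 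The $C_n$-invariance is already implicit in the literature the paper uses (it underlies Theorem~\ref{thm: hypersimplex cyclic action}), so the new content of this half is the reflection, and your argument for it is correct in outline. Your observation that $k\in\{1,n-1\}$ must be excluded from the maximality claim is also right, and is a genuine (if minor) correction to the conjecture as stated, since $\Delta_{1,n}$ and $\Delta_{n-1,n}$ are simplices with trivial, $S_n$-invariant triangulations.

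The gap is the maximality direction, which is the substantive content of the conjecture, and your proposal stops at naming candidate invariants without computing the stabilizer of any of them for a general pair $(k,n)$. Since the set of $\sigma\in S_n$ preserving the triangulation is itself a subgroup, maximality is equivalent to showing that \emph{every} $\sigma\notin D_{2n}$ moves some simplex of the triangulation off the triangulation, and none of your candidates is shown to do this. The $1$-skeleton does work for $(2,4)$ (the triangulation adds exactly the diagonal $\{e_{13},e_{24}\}$, whose stabilizer is $D_8$), but for general $k$ the set of diagonals introduced by the triangulation is large and its $S_n$-stabilizer is not obviously $D_{2n}$; likewise the claim that the dual graph "admits an explicit description \dots whose $S_n$-automorphism group one can show equals $D_{2n}$" is precisely the assertion to be proved, not an argument for it. Note also that the paper's own non-invariance argument at the end of Section~\ref{sec: discussion} (exhibiting two simplices with overlapping interiors) only rules out $S_n$-invariance of \emph{any} triangulation of $\Delta_{2,n}$ and does not localize the stabilizer of this particular triangulation. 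So the proposal establishes (modulo citing Lam--Postnikov precisely) only the first sentence of the conjecture, with the cases $k\in\{1,n-1\}$ excised from the second; the maximality claim remains open.
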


We note that by Theorem~\ref{thm: k=2 H* coeff}, the coefficient $H^*_1$ of $t$ in $H^*(\Delta_{2,n}; D_{2n})$ is a permutation character. Explicitly, it is straightforward to show that $H^*_1$ is the permutation character of $D_{2n}$ acting on the set $\{\{i,j\} \subseteq [n] : |i-j| \ge 2 \}$, which can be thought of as the set of diagonals of an $n$-gon. It would be interesting to know if this generalises to all hypersimplices $\Delta_{k,n}$ with $k > 2$. 

\begin{question}
    Let $k < n$. For which groups $G \le S_n$ does there exist a $G$-invariant lattice triangulation of $\Delta_{k,n}$? Given such $G$, does Conjecture~\ref{conj: stapledon positive coeff} hold for $H^*(\Delta_{k,n}; G)$? Does Theorem~\ref{thm: hypersimplex cyclic action} hold if we replace the cyclic group $C_n$ with the dihedral group $D_{2n}$ (as defined in Conjecture~\ref{conj: triangulation under dihedral group})?
\end{question}

We remark that the final question is well defined as the winding number of a DOSP is invariant under the action of the dihedral group.
We also note that $\Delta_{2,n}$ does not admit a $S_n$-invariant lattice triangulation for any $n \ge 4$. To see this, we write $e_{ij} := e_i + e_j \in \RR^n$ for each $i,j \in [n]$ and observe that any triangulation of $\Delta_{2,n}$ must include a simplex $\nabla = \conv\{e_{12}, e_{13},\ldots,e_{1n},e_{ab}\}$ with $2 \le a < b \le n$.
This follows from the fact that $\Delta_{2,n} \cap \{x \in \RR^n : x_1 = 1\}$ is a facet of $\Delta_{2,n}$ given by the convex hull of the vertices $e_{1j}$ with $2 \le j \le n$.
Since $n \ge 4$, there exists $c \in [n] \setminus \{1,a,b\}$. Let $\sigma$ be the $3$-cycle $(a \ b \ c) \in S_n$. The relative interior of the simplex $\sigma(\nabla) = \conv\{e_{12}, \dots, e_{1n}, e_{bc}\}$ intersects the relative interior of $\nabla$, hence they cannot belong to the same triangulation. So the original triangulation is not $S_n$-invariant.

\section*{Acknowledgements}

The authors would like to thank Nick Early for explaining the motivation behind DOSPs and the connection to equivariant volumes of hypersimplices.
The second author was supported by a Grant-in-Aid for JSPS Research Fellows (Grant Number: 24KJ1590) under the JSPS DC2 programme.

\bibliographystyle{plain}
\bibliography{bibliography}

\begin{thebibliography}{10}

\bibitem{ardila2021equivariant_volumes}
Federico Ardila, Anna Schindler, and Andr{\'e}s~R Vindas-Mel{\'e}ndez.
\newblock The equivariant volumes of the permutahedron.
\newblock {\em Discrete \& Computational Geometry}, 65:618--635, 2021.

\bibitem{ardila2019permutahedron}
Federico Ardila, Mariel Supina, and Andr\'{e}s~R. Vindas-Mel\'{e}ndez.
\newblock The equivariant {E}hrhart theory of the permutahedron.
\newblock {\em Proc. Amer. Math. Soc.}, 148(12):5091--5107, 2020.

\bibitem{braun2016unimodality}
Benjamin Braun.
\newblock Unimodality problems in {E}hrhart theory.
\newblock {\em Recent trends in combinatorics}, pages 687--711, 2016.

\bibitem{clarke2023equivariant}
Oliver Clarke, Akihiro Higashitani, and Max K{\"o}lbl.
\newblock The equivariant {E}hrhart theory of polytopes with order-two symmetries.
\newblock {\em Proceedings of the American Mathematical Society}, 151(09):4027--4041, 2023.

\bibitem{davis2015ehrhart}
Robert Davis.
\newblock Ehrhart series of polytopes related to symmetric doubly-stochastic matrices.
\newblock {\em The Electronic Journal of Combinatorics}, pages P2--17, 2015.

\bibitem{early2016thesis}
Nicholas Early.
\newblock {\em Combinatorics and Representation Theory for Generalized Permutohedra}.
\newblock The Pennsylvania State University, 2016.

\bibitem{early2016combinatorics}
Nick Early.
\newblock Combinatorics and representation theory for generalized permutohedra i: simplicial plates.
\newblock {\em arXiv preprint arXiv:1611.06640}, 2016.

\bibitem{early2017conjectures}
Nick Early.
\newblock Conjectures for {E}hrhart $h^\ast$-vectors of hypersimplices and dilated simplices.
\newblock {\em arXiv:1710.09507}, 2017.

\bibitem{elia2022techniques}
Sophia Elia, Donghyun Kim, and Mariel Supina.
\newblock Techniques in equivariant {E}hrhart theory.
\newblock {\em Annals of Combinatorics}, 28(3):819--870, 2024.

\bibitem{ferroni2021positive}
Luis Ferroni.
\newblock Hypersimplices are {E}hrhart positive.
\newblock {\em Journal of Combinatorial Theory, Series A}, 178:105365, 2021.

\bibitem{higashitani2010shifted}
Akihiro Higashitani.
\newblock Shifted symmetric $\delta$-vectors of convex polytopes.
\newblock {\em Discrete Mathematics}, 310(21):2925--2934, 2010.

\bibitem{hofscheier2018ehrhart}
Johannes Hofscheier, Lukas Katth{\"a}n, and Benjamin Nill.
\newblock Ehrhart theory of spanning lattice polytopes.
\newblock {\em International Mathematics Research Notices}, 2018(19):5947--5973, 2018.

\bibitem{Katzman2005Hilbert}
Mordechai Katzman.
\newblock The {H}ilbert series of algebras of the {V}eronese type.
\newblock {\em Communications in Algebra}, 33(4):1141--1146, 2005.

\bibitem{kim2020combinatorial}
Donghyun Kim.
\newblock A combinatorial formula for the {E}hrhart {$h^\ast$}-vector of the hypersimplex.
\newblock {\em Journal of Combinatorial Theory, Series A}, 173:105213, 2020.

\bibitem{lam2007alcoved}
Thomas Lam and Alexander Postnikov.
\newblock Alcoved polytopes, {I}.
\newblock {\em Discrete \& Computational Geometry}, 38:453--478, 2007.

\bibitem{laplace1886oeuvres}
Pierre-Simon Laplace.
\newblock {\em Oeuvres complet\`etes}, volume~7.
\newblock Gauthier-Villars, Paris, 1886.

\bibitem{li2012ehrhart}
Nan Li.
\newblock Ehrhart $h^*$-vectors of hypersimplices.
\newblock {\em Discrete \& Computational Geometry}, 48(4):847--878, 2012.

\bibitem{stanley1977eulerian}
Richard Stanley.
\newblock Eulerian partitions of a unit hypercube.
\newblock {\em Higher Combinatorics}, 31:49, 1977.

\bibitem{stanley1980decompositions}
Richard~P. Stanley.
\newblock Decompositions of rational convex polytopes.
\newblock In J.~Srivastava, editor, {\em Combinatorial Mathematics, Optimal Designs and Their Applications}, volume~6 of {\em Annals of Discrete Mathematics}, pages 333--342. Elsevier, 1980.

\bibitem{stanley2011enumerative}
Richard~P Stanley.
\newblock Enumerative combinatorics volume 1 second edition.
\newblock {\em Cambridge studies in advanced mathematics}, 2011.

\bibitem{stapledon2009inequalities}
Alan Stapledon.
\newblock Inequalities and {E}hrhart $\delta$-vectors.
\newblock {\em Transactions of the American Mathematical Society}, 361(10):5615--5626, 2009.

\bibitem{stapledon2011equivariant}
Alan Stapledon.
\newblock Equivariant {E}hrhart theory.
\newblock {\em Advances in Mathematics}, 226(4):3622--3654, 2011.

\bibitem{stapledon2011representations}
Alan Stapledon.
\newblock Representations on the cohomology of hypersurfaces and mirror symmetry.
\newblock {\em Advances in Mathematics}, 226(6):5268--5297, 2011.

\bibitem{stapledon2024invariant_triangulations}
Alan Stapledon.
\newblock Equivariant {E}hrhart theory, commutative algebra and invariant triangulations of polytopes.
\newblock {\em arXiv:2311.17273}, 2023.

\bibitem{sturmfels1996grobner}
Bernd Sturmfels.
\newblock {\em {G}röbner bases and convex polytopes}, volume~8.
\newblock American Mathematical Soc., 1996.

\bibitem{Worpitzky1883}
Julius Worpitzky.
\newblock Studien über die bernoullischen und eulerschen zahlen.
\newblock {\em Journal für die reine und angewandte Mathematik}, 94:203--232, 1883.

\end{thebibliography}

\bigskip
\bigskip
\noindent 
\footnotesize

\noindent
{\sc Oliver Clarke, Department of Mathematical Sciences, Durham University}\\ 
\textit{Email address}: \texttt{oliver.clarke@durham.ac.uk}\\
\textit{URL}: \texttt{https://www.oliverclarkemath.com/}\\

\noindent
{\sc Max K\"olbl, Department of Pure and Applied Mathematics, Osaka University}\\ 
\textit{Email address}: \texttt{max.koelbl@ist.osaka-u.ac.jp}\\

\end{document}